\newcommand{\gtop}{\mathrm{PGroup}} 
\newcommand{\Etale}{{J^\textrm{germs}}}
\newcommand{\TSol}{{\operatorname{EtSol}}} 
\newcommand{\EtSol}[2][\SR]{{{\TSol}}_{#1}(#2)}
\newcommand{\GammaRM}[1]{\GammaR^{#1}}
\newcommand{\GammaM}[1]{\Gamma^{#1}}
\newcommand{\taut}{\mathrm{taut}}
\newcommand{\tautcoij}{\gamma_{ij}^\taut}
\newcommand{\tautcofij}{\gamma_{ij}^{f,\taut}}
\newcommand{\tautcofliftij}{\tilde\gamma_{ij}^{f,\taut}}
\newcommand{\tautMbF}[1]{\SM_\SR^f(#1)}
\newcommand{\tautPbF}[1]{\SP_\SR^f(#1)}
\newcommand{\SM}{{\mathcal{M}}}
\newcommand{\nh}{\mathrm{nHS}}
\newcommand{\aorder}{{\tilde{r}}}
\DeclareMathOperator{\hofib}{hofib}
\newcommand{\corank}{\operatorname{corank}}
\newcommand{\Fib}{\catname{Fib}}
\newcommand{\Eucl}{\catname{Eucl}}
\newcommand{\GammaR}[1][\SR]{{\operatorname{\Gamma}}_{#1}}
\newcommand{\Whitney}{{\operatorname{Wh}}}
\begin{document}

\title{Wrinkling and Haefliger structures}

\subjclass[2020]{Primary: 57R30, 57R32, 57R45. Secondary: 58H05.}
\date{\today}

\keywords{h-principle, differential relations, Haefliger structures, wrinkling}

\author{Anna Fokma}
\address{Utrecht University, Department of Mathematics, Budapestlaan 6, 3584~CD Utrecht, The~Netherlands}
\email{s.j.fokma@uu.nl}

\author{\'Alvaro del Pino}
\address{Utrecht University, Department of Mathematics, Budapestlaan 6, 3584~CD Utrecht, The~Netherlands}
\email{a.delpinogomez@uu.nl}

\author{Lauran Toussaint}
\address{VU Amsterdam, Department of Mathematics, De Boelelaan 1111, 1081~HV Amsterdam, The~Netherlands}
\email{l.e.toussaint@vu.nl}

\begin{abstract} 
Wrinkling techniques, introduced by Eliashberg and Mishachev, are typically used to prove h-principles of the form: ``formal solutions of a partial differential relation $\SR$ can be deformed to singular/wrinkled solutions''. What a wrinkled solution is depends on the context, but the overall idea is that it should be an object that fails to be a solution only due to the presence of mild/controlled singularities. \\

\noindent Much earlier, Haefliger structures were introduced by Haefliger as singular analogues of foliations. Much like a foliation is locally modeled on a submersion, a Haefliger structure is modeled on an arbitrary map. This implies that Haefliger structures have better formal properties than foliations. For instance, they can be pulled back by arbitrary maps and admit a classifying space.\\

\noindent In \cite{PT1}, the second and third authors generalized the \emph{wrinkled embeddings} of Eliashberg and Mishachev to arbitrary order. This paper can be regarded as a sequel in which we deal instead with generalizations of \emph{wrinkled submersions}. The main messages are that:
\begin{itemize}
\item Haefliger structures provide a nice conceptual framework in which general wrinkling statements can be made.
\item Wrinkling can be interpreted as holonomic approximation into the \'etale space of solutions of the relation $\SR$.
\end{itemize}
These statements imply connectivity statements relating (1) $\SR$ to its \'etale space of solutions and (2) the classifying space for foliations with transverse $\SR$-geometry to its formal counterpart.
\end{abstract}
\maketitle

\section{Introduction} \label{sec:introduction}

In \cite{LM}, Laudenbach and Meigniez present the h-principle for geometric structures as a two-step process. The first step consists of producing a Haefliger microbundle for the given geometry, out of a given formal geometric structure. The second step is the so-called \emph{regularization}: using homotopies/surgeries one makes the base manifold transverse to the foliation on the Haefliger microbundle, yielding a genuine geometric structure. Depending on the geometric problem, difficulties appear in each of the steps (or in both). Classical work of Gromov \cite{Gr86} shows that the second step can always be achieved if the manifold is open, a particular case being precisely the h-principle for foliations of Haefliger~\cite{Haef1,Haef2}.

In \cite{LM}, the authors carry out the first step for symplectic and contact structures. However, their proof uses Moser stability and, as such, cannot be adapted to other geometric structures, since most of them have a much larger moduli space.

The observation that motivates the present article is that wrinkling techniques can be used to implement the first step for any open (or, more generally: microflexible and locally integrable) $\Diff$-invariant partial differential relation $\SR$. That is, every formal solution of $\SR$ can be suitably homotoped to produce a Haefliger microbundle endowed transversely with a solution of $\SR$. This is stated as \cref{thm:wrinklingMicro} below. To get to this statement we first prove \cref{thm:wrinklingEtale}, saying that every formal solution can be approximated by a wrinkled submersion into the \'etale space of solutions of $\SR$. We think of these as singular, and in particular wrinkled, solutions of $\SR$.

The key ingredient behind both theorems is a parametric application of holonomic approximation. This approach is very closely related to the main construction in Eliashberg's and Mishachev's work on \emph{wrinkled submersions} \cite{ElMiWrinI}, which has later been used in other h-principles \cite{BEM}. That Haefliger microbundles and wrinkling turn out to be closely related is not surprising: Much like in the two-step process of Laudenbach and Meigniez, the idea behind wrinkling is to produce singular solutions of differential relations, which may be regularized in certain applications. In fact, one of the first applications of wrinkling \cite{ElMiWrinIII} was reproving Thurston's regularization of Haefliger microbundles in corank at least $2$.

We dedicate the remainder of the introduction to stating our main theorems.

\subsection{Holonomic approximation in \'etale space}

We fix a fiber bundle $p: \Psi \rightarrow M$ over an $m$-dimensional manifold $M$, and write $p_b: J^r\Psi \rightarrow M$ for its associated bundle of $r$-jets. We then consider the \'etale space $\Etale{\Psi}$ of sections of $\Psi$, which consists of germs of sections of $\Psi$; its projection to the base is denoted by $p_b: \Etale{\Psi} \rightarrow M$ as well. We also introduce for each $r \in \N\cup\{\infty\}$, the maps $p_r: \Etale{\Psi} \rightarrow J^r\Psi$ that amount to taking the $r$-jet of the given germ. Endowed with the \'etale topology, $\Etale{\Psi}$ is a smooth $m$-dimensional non-Hausdorff manifold that submerses surjectively onto $M$ via $p_b$.

In \cite{ElMiWrinI}, Eliashberg and Mishachev prove that the space of formal submersions is homotopy equivalent to the space of wrinkled submersions, which are submersions up to a specific type of singularity: the wrinkle. We review the fundamentals of wrinkling in \cref{sec:singularities}. 
Our main result generalizes the key construction in \cite{ElMiWrinI} to show the following. 

\begin{restatable*}{thm}{wrinklingEtale} \label{thm:wrinklingEtale}
Let $\Psi \rightarrow M$ be a fiber bundle. Given any section $F : M \rightarrow J^r\Psi$, there exists a wrinkled submersion $G: M \rightarrow \Etale{\Psi}$ such that:
\begin{itemize}
\item $p_r \circ G$ and $F$ are $C^0$-close, 
\item the wrinkled submersion $p_b \circ G: M \rightarrow M$ is $C^0$-close to the identity, and
\item the regularized differential of $p_b \circ G$ is homotopic to $j^1(\id)$ via formal submersions.
\end{itemize}
\end{restatable*} 
\Cref{thm:wrinklingEtale} is the basis for all the statements that follow and is proven in \cref{sec:wrinklingEtale}. A reader not (yet) familiar with the wrinkling philosophy can think of the third item as a variation of the second item, telling us that $p_b \circ G$ is close to being a section.

We recall that elementary examples show that holonomic approximation is generally impossible for sections over a closed domain. One may interpret \cref{thm:wrinklingEtale} as saying that it is possible to achieve holonomic approximation if multi-valued holonomic sections are allowed: we observe that any map $G$ into $\Etale{\Psi}$ yields a map $p_i \circ G$ that is tangent to the Cartan distribution on $J^i\Psi$, for every $i$. Such maps are non-graphical generalizations of holonomic sections, which is why they were dubbed \emph{holonomic multi-valued sections} in \cite{PT1}. It follows that:
\begin{corollary}
    Let $\Psi \rightarrow M$ be a fiber bundle. Any section $F : M \rightarrow J^r\Psi$ can be $C^0$-approximated by a holonomic multi-valued section $p_r \circ G: M \rightarrow J^r\Psi$.
\end{corollary}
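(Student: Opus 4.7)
The plan is to read the corollary as an immediate unpacking of Theorem \ref{thm:wrinklingEtale}. First I would apply that theorem to the given section $F : M \to J^r\Psi$ and extract the wrinkled submersion $G : M \to \Etale{\Psi}$ it supplies. The $C^0$-approximation requirement is then handed to us on a plate: by construction $p_r \circ G$ is $C^0$-close to $F$.

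The only thing left to justify is the adjective \emph{holonomic multi-valued}. My plan is to invoke the structural remark made in the paragraph preceding the corollary: for any map $G : M \to \Etale{\Psi}$ (smooth in the \'etale sense) and any $i$, the composition $p_i \circ G : M \to J^i\Psi$ is automatically tangent to the Cartan distribution. The reason is pointwise: a point of $\Etale{\Psi}$ is the germ of a local section, $p_i$ records its $i$-jet, and restricting that germ to any curve in $M$ yields a genuine local section whose $i$-jet prolongation is by definition Cartan-integral. Hence the image of $p_i \circ G$ is tangent to the Cartan distribution at every point. This is precisely the definition of a holonomic multi-valued section from \cite{PT1}. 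Specialising to $i = r$ gives the corollary.

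There is essentially no obstacle to overcome here beyond Theorem \ref{thm:wrinklingEtale} itself; the corollary is a translation of that theorem using the dictionary ``maps into the \'etale space $=$ holonomic multi-valued sections after jet projection''. If any step deserves a remark, it is perhaps checking that the notion of wrinkled submersion into $\Etale{\Psi}$ (which is non-Hausdorff) interacts well with the composition $p_r \circ G$ so that the resulting object is what \cite{PT1} calls a holonomic multi-valued section with the expected (wrinkle-type) singularity pattern; but this is built into the setup of the theorem rather than something to prove afresh.
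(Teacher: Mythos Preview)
Your proposal is correct and matches the paper's approach exactly: the corollary is stated there as an immediate consequence of Theorem~\ref{thm:wrinklingEtale} together with the observation in the preceding paragraph that any map into $\Etale\Psi$ projects via $p_r$ to a map tangent to the Cartan distribution, i.e.\ a holonomic multi-valued section. The paper gives no separate proof, and none is needed beyond what you have written.
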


\begin{remark}
    Results similar to \cref{thm:wrinklingEtale} and its upcoming parametric and relative version from \cref{thm:wrinklingEtaleParametric} were obtained simultaneously by Eliashberg, Mishachev and Cieliebak in \cite{CiElMi}, although the frameworks that are used to phrase the results are different.
\end{remark}

\subsubsection{Parametric and relative version} \label{sec:WhitneyVSEtaleTop}
We now state a version of \cref{thm:wrinklingEtale} that is parametric, and relative in both parameter and domain. The parametric nature of the statement requires a topology on $\Etale\Psi$ that is coarser than the \'etale topology: the Whitney topology. We define the \textbf{Whitney topology} on $\Etale{\Psi}$ as the pullback of the topology in $J^\infty\Psi$ along the projection $p_\infty: \Etale\Psi \to J^\infty\Psi$. We observe that the maps $p_r:\Etale\Psi \to J^r\Psi$ are continuous for both topologies. We still make it explicit in the statement below, but from now on, unless indicated otherwise, we always consider individual maps into $\Etale{\Psi}$ that are continuous in the \'etale sense.

\begin{restatable*}{thm}{wrinklingEtaleParametric} \label{thm:wrinklingEtaleParametric}
Let $\Psi \rightarrow M$ be a fiber bundle, let $K$ be compact manifold serving as parameter space, and let $F_k : M \rightarrow J^r\Psi$ be a $K$-family of sections. Suppose that the $F_k$ are holonomic on $\Op(M')$ and whenever $k$ belongs to $\Op(K')$, where $M' \subset M$ and $K' \subset K$ are closed subsets. Then, there exists a $K$-family of wrinkled submersions $G_k: M \rightarrow \Etale{\Psi}$ such that:
\begin{itemize}
    \item for each $k \in K$ the map $G_k$ is continuous for the \'etale topology,
    \item the map $G: M \times K \to \Etale(\Psi)$ defined by $G(\cdot,k)=G_k(\cdot)$ is continuous for the Whitney topology,
    \item the maps $p_r \circ G_k$ and $F_k$ are $C^0$-close, and agree on $M'$ and if $k \in K'$,
    \item the maps $p_b \circ G_k: M \rightarrow M$ form a $K$-family of wrinkled submersions that are $C^0$-close to the identity, and
    \item the $K$-family of regularized differentials of $p_b \circ G_k$ is homotopic to the constant family $j^1(\id)$ via families of formal submersions, relative to $M'$ and $K'$.
\end{itemize}
\end{restatable*}

We will prove a slightly stronger statement in \cref{pro:wrinklingEtaleMfdFib}, phrased instead in terms of a single fibered wrinkled submersion $M \times K \rightarrow \Etale(\Psi \times K)$ that is continuous for the \'etale topology. This reduces \cref{thm:wrinklingEtaleParametric} to the non-parametric case.

\subsection{Wrinkled solutions of differential relations}

Consider a partial differential relation $\SR \subset J^r\Psi$ of order $r$. The \'etale space $\EtSol{M}$ of solutions of $\SR$ is an $m$-dimensional submanifold of $\Etale \Psi$. By construction, $p_r: \Etale\Psi \rightarrow J^r\Psi$ takes $\EtSol{M}$ to $\SR$, and this is a surjection if $\SR$ is open.

Our wrinkled holonomic approximation from \cref{thm:wrinklingEtale} immediately implies that:
\begin{corollary} \label{cor:wrinklingEtale}
    Let $M$ be an $m$-dimensional manifold and let $\SR \subset J^r\Psi$ be an open differential relation. Let $F : M \rightarrow \SR$ be a formal solution. Then, there exists a wrinkled submersion $G: M \rightarrow \EtSol{M}$ satisfying the conclusions of \cref{thm:wrinklingEtale} and additionally satisfying that $p_r \circ G$ is homotopic to $F$ via maps $M \to \SR$.
\end{corollary}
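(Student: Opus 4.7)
The plan is to deduce this from Theorem~\ref{thm:wrinklingEtale} by exploiting the openness of $\SR$ in $J^r\Psi$ at two points in the argument. First I would feed $F$, now viewed as a section $M \to J^r\Psi$, into Theorem~\ref{thm:wrinklingEtale}, obtaining a wrinkled submersion $G : M \to \Etale{\Psi}$ such that $p_r \circ G$ is $C^0$-close to $F$ and $p_b \circ G$ is $C^0$-close to $\id_M$. Since $F(M) \subset \SR$ and $\SR$ is open, by making the $C^0$-error small enough pointwise (which the theorem allows, as the approximation may be taken with respect to any prescribed positive continuous error function) one arranges that the image of $p_r \circ G$ still lies in $\SR$.

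Next I would upgrade this to the claim that $G$ takes values in $\EtSol{M}$. A point of $\Etale{\Psi}$ belongs to $\EtSol{M}$ if and only if it is the germ of a section whose $r$-jet lies in $\SR$ on a whole neighbourhood of the basepoint, not merely at the basepoint itself. For each $x \in M$, the germ $G(x)$ at $p_b(G(x))$ has $r$-jet $(p_r \circ G)(x) \in \SR$ by the previous step; continuity of the $r$-jet of any smooth representative, combined with the openness of $\SR$, then forces the $r$-jet to remain inside $\SR$ throughout some neighbourhood of the basepoint, so that $G(x) \in \EtSol{M}$ as required.

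Finally, to produce a homotopy from $p_r \circ G$ to $F$ inside $\SR$, I would fix an auxiliary Riemannian metric on $J^r\Psi$ and interpolate by the unique minimising geodesic joining $(p_r \circ G)(x)$ to $F(x)$ for each $x \in M$. Shrinking the $C^0$-error from Theorem~\ref{thm:wrinklingEtale} so that at every point it is below both the local convexity radius and the distance from $F(x)$ to the complement of $\SR$, each such geodesic stays inside $\SR$; this yields the desired homotopy within $\SR$.

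There is essentially no hard step here: all the substantive work is carried out by Theorem~\ref{thm:wrinklingEtale}. The only bookkeeping point that requires care is tracking the two distinct roles played by the openness of $\SR$ — first in guaranteeing that $p_r \circ G$ itself lands in $\SR$, and then in promoting this pointwise jet-level condition to the germ-level condition that defines $\EtSol{M}$. Both reductions are automatic as soon as one chooses the $C^0$-scale in Theorem~\ref{thm:wrinklingEtale} sufficiently small relative to the distance from $F(M)$ to $J^r\Psi \setminus \SR$.
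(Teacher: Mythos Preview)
Your proposal is correct and follows the same approach the paper implicitly has in mind: the paper simply states that Theorem~\ref{thm:wrinklingEtale} ``immediately implies'' the corollary, and your argument spells out precisely why, using the openness of $\SR$ to (i) force $p_r\circ G$ into $\SR$, (ii) promote this to $G(M)\subset\EtSol{M}$, and (iii) obtain the homotopy within $\SR$ by short geodesics. No different ideas are involved; you have just filled in the bookkeeping that the paper omits.
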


We also state the parametric and relative version:
\begin{corollary} \label{cor:wrinklingEtaleParametric}
    Let $M$ be an $m$-dimensional manifold, let $K$ be a compact manifold serving as parameter space and let $\SR \subset J^r\Psi$ be an open differential relation. Let $F_k : M \rightarrow \SR \subset J^r\Psi$ be a $K$-family of formal solutions of $\SR$. Suppose that the $F_k$ are holonomic over a neighborhood of a closed subset $M' \subset M$ and whenever $k$ belongs to a neighborhood of a closed subset $K' \subset K$.

    Then, there exists a $K$-family $G_k: M \rightarrow \EtSol{M}$ satisfying the conclusions of \cref{thm:wrinklingEtaleParametric} and additionally satisfying that each $p_r \circ G_k$ is homotopic to $F_k$, as maps $M \to \SR$, relative to $M'$ and $K'$.
\end{corollary}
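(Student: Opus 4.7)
My plan is to derive this corollary directly from the parametric \'etale holonomic approximation Theorem \ref{thm:wrinklingEtaleParametric}, using openness of $\SR$ twice: first to promote the target from $\Etale\Psi$ to $\EtSol{M}$, and then to construct the homotopy.

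Before invoking Theorem \ref{thm:wrinklingEtaleParametric}, I would fix a continuous positive function $\varepsilon: M \times K \to (0,\infty)$ whose value at $(x,k)$ is smaller than the fiberwise distance (in some auxiliary Riemannian metric on $J^r\Psi \to M$) from $F_k(x)$ to the complement of $\SR$. Such a function exists because $\SR$ is open and $F$ takes values in it. Applying Theorem \ref{thm:wrinklingEtaleParametric} to the family $F_k$ with this pointwise scale of $C^0$-closeness produces a family $G_k : M \to \Etale\Psi$ satisfying its three bullet points. Because $p_r \circ G_k$ is $\varepsilon$-close to $F_k$, its image lies in $\SR$. Moreover, for each $x$ the point $G_k(x) \in \Etale\Psi$ is a germ of a section of $\Psi$ whose $r$-jet at the base point $p_b G_k(x)$ lands in the open set $\SR$; by continuity of $j^r$ on any germ representative I can shrink that representative so its $r$-jet lies in $\SR$ throughout its domain, hence $G_k(x) \in \EtSol{M}$. \'Etale-smoothness and Whitney-continuity of the family are inherited from $\Etale\Psi$ since $\EtSol{M}$ sits inside it as a submanifold.

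To produce the required homotopy, I would fix a fiberwise Riemannian metric on $J^r\Psi$ and use the geodesic interpolation
\[
H_{s,k}(x) \;=\; \exp_{F_k(x)}\!\bigl( s\,\log_{F_k(x)} p_r G_k(x) \bigr), \quad s \in [0,1].
\]
By shrinking $\varepsilon$ below the fiberwise injectivity radius if necessary, $H_{s,k}(x)$ is well-defined and remains within the $\varepsilon$-tube around $F_k$, hence inside $\SR$; it equals $F_k$ at $s=0$ and $p_r G_k$ at $s=1$. Because $F_k$ and $p_r G_k$ agree on a neighborhood of $M'$ and for $k$ in a neighborhood of $K'$, the logarithm vanishes there and $H_{s,k}$ is constant in $s$, giving the required relativity.

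The main obstacle is not in the present corollary, which is essentially a bookkeeping consequence of openness of $\SR$, but rather in Theorem \ref{thm:wrinklingEtaleParametric} itself; once the parametric relative wrinkling into the \'etale space is available, the statement follows as above.
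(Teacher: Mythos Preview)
Your argument is correct and matches the paper's intent: the paper simply states that Theorem~\ref{thm:wrinklingEtaleParametric} ``immediately implies'' this corollary via openness of $\SR$, without spelling out the details, and what you wrote is exactly the intended unpacking. One small imprecision: the point $p_r G_k(x)$ lies in the fibre of $J^r\Psi$ over $p_b G_k(x)$, not over $x$, so a \emph{fibrewise} metric does not literally compare it with $F_k(x)$; you should take $\varepsilon$ and the geodesic interpolation with respect to a Riemannian metric on the total space $J^r\Psi$ (or any local convex combination), after which your steps go through verbatim.
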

It will be apparent from the proof of \cref{thm:wrinklingEtale} that \cref{cor:wrinklingEtale} also holds when $\SR$ is microflexible and locally integrable. The same applies to the parametric versions. We leave this to the reader.

\subsubsection{Connectivity of \'etale space}
\Cref{cor:wrinklingEtale} states that we can upgrade formal solutions (i.e. sections) of $\SR$ to wrinkled submersions into $\EtSol{M}$. If we give up on sections altogether, we can consider instead arbitrary maps into $\SR$ (with domain not necessarily $M$) and ask whether these lift to \'etale space. We prove the following result in \cref{sec:proofConnectivityEtale}:
\begin{restatable*}{pro}{connectivityEtale} \label{prop:connectivityEtale}
Let $M$ be an $m$-dimensional manifold and let $\SR$ be an open relation. The map $p_r: \EtSol{M} \to \SR$ is $m$-connected.
\end{restatable*}

\subsection{\texorpdfstring{$\SR$}{R}-microbundles} \label{sec:IntroRMb}
Throughout this section we assume that $\SR$ is a $\Diff$-invariant relation of order $r$ and dimension $n$. This means (see \cref{ssec:natural} for details) that over each $n$-dimensional manifold $N$ we have a bundle $\Psi \rightarrow N$, and a subset of $J^r\Psi$, also denoted by $\SR$. The bundle $\Psi$ is endowed with an action of the pseudogroup $\Diff_\loc(N)$ of locally defined diffeomorphisms of $N$, and we require $\SR$ to be invariant under this action. In this case $\Etale\Psi$ also inherits an action of $\Diff_\loc(N)$ which leaves $\EtSol{N}$ invariant.

Given a microbundle $E\to M$ of rank $n$ (see \cref{def:MB}), we consider solutions of $\SR$ on the fibers of $E$. This defines a family of solutions of $\SR$, parametrized by $M$. As discussed in \cref{sec:WhitneyVSEtaleTop}, such a family can be continuous for either the \'etale or the Whitney topology. In the former case, we obtain a canonical foliation on the microbundle. This motivates us to define $\SR$-microbundles, which are thus generalizations of solutions of $\SR$ on $M$:

\begin{definition} \label{def:HaefligerMicro}
    Let $M$ be an $m$-dimensional manifold and let $\SR$ be a $\Diff$-invariant differential relation of dimension $n$. An \textbf{$\SR$-microbundle} on $M$ is a triple $(E,\SF,F)$, where:
    \begin{itemize}
        \item $E \to M$ is a microbundle of rank $n$,
	   \item $\SF$ is a germ of a codimension-$n$ foliation on $E$ along $M$, transverse to the fibers of $E$, and 
	   \item $F$ is a family of germs of solutions on the fibers of $E$ along $M$, that is continuous for the \'etale topology and invariant under the holonomy of $\SF$.
    \end{itemize}
\end{definition}

If we drop $F$, we obtain a foliated microbundle $(E,\SF)$ that has nothing to do with $\SR$ and is classically known as a Haefliger microbundle \cite{Haef1,Haef2}. The geometry is introduced via $F$, which we can think of as a solution of $\SR$ over the leaf space of $\SF$. This only makes sense due to $\Diff$-invariance as we will explain in \cref{sec:prelimFolTransvStr}, when we discuss transverse $\SR$-structures.

\begin{remark}
Two names that we have actively tried to avoid are Haefliger structure and $\Gamma$-structure. The reason is that these terms are used to mean slightly different things across the literature. In particular, what Haefliger originally \cite{Haef2} called a $\Gamma$-structure, we just refer to as an equivalence class of $\Gamma^n$-cocycles. In turn, this is in one-to-one correspondence with an isomorphism class of principal $\Gamma^n$-bundles (\cref{ssec:PGBundles}). 
\end{remark}

We are particularly interested in the case where $E=TM$ and hence $m=n$, in which case we speak of a \textbf{tangential} microbundle. The simplest examples arise as follows: Fixing a metric on $M$, and thus an exponential map, allows us to assign an $\SR$-microbundle 
\[ \exp(f) := (TM,\ker(d\exp),f \circ \exp) \]
to each solution $f$ of $\SR$ on $M$. The leaf space of (the germ of) $\ker(d\exp)$ is simply $M$. It follows that leaf spaces of other tangential microbundles may be regarded as singular replacements of $M$.

For the formal analogue of $\SR$-microbundles (see ~\cref{ssec:formalAnalogue} for more details), we ask the family of solutions to instead be a family of \emph{formal} solutions. Since we now consider formal solutions, we ask the family to be smooth for the Whitney topology. We note that hence we also do not obtain a canonical foliation. 
\begin{definition}
Let $M$ be an $m$-dimensional manifold and let $\SR$ be a $\Diff$-invariant differential relation of dimension $n$. A \textbf{formal $\SR$-microbundle} on $M$ is a pair $(E,F)$, where:
\begin{itemize}
	\item $E \to M$ is a microbundle of rank $n$, and
	\item $F$ is a smooth family of formal solutions on the fibers of $E$ along $M$.
\end{itemize}
\end{definition}

\subsubsection{The tangential case}
Every formal solution $F$ of $\SR$ over $M$ can be lifted to a tangential formal $\SR$-microbundle $\exp(F)$. The following is proven in \cref{sec:wrinklingHaefliger}. Here homotopic should be interpreted (as argued in \cref{sec:prelim2classspace}) as a \emph{concordance} of formal $\SR$-microbundles over $M$, which is a formal $\SR$-microbundle over $M\times I$.
\begin{restatable*}{thm}{wrinklingMicro} \label{thm:wrinklingMicro}
    Let $M$ be an $m$-dimensional manifold and let $\SR$ be an open and $\Diff$-invariant relation of dimension $n=m$. Let $F : M \rightarrow \SR \subset J^r\Psi$ be a formal solution of $\SR$. Then, the formal $\SR$-microbundle is homotopic via formal $\SR$-microbundles to an $\SR$-microbundle $(TM,\SF,G)$ where the foliation has only wrinkle singularities with respect to $M$.
\end{restatable*}
Let us explain briefly the relation between \cref{thm:wrinklingEtale,thm:wrinklingMicro}. Roughly speaking, the second one follows from the first by pulling back the tautological $\SR$-microbundle in the \'etale space $\EtSol{M}$ using $G$.

For \'etale spaces we emphasized the distinction between the \'etale and Whitney topologies, which was important to discuss parametric statements. For $\SR$-microbundles we similarly speak of \textit{concordances} and \textit{Whitney continuous families}. The former is an $\SR$-microbundle over $M \times I$. The latter consists of a microbundle $E \to M \times I$, with a homotopy of foliations $\SF_t$ on $M \times \{t\}$ each endowed with a transverse $\SR$-structure $F_t$ inducing an $M \times I$-family of solutions of $\SR$ that varies continuously in $t$ for the Whitney topology.

It is the notion of a Whitney continuous family that we need for the parametric analogue of \cref{thm:wrinklingMicro}. We point out that in the statement below, if the $F_k$ are holonomic over $\Op(M') $ and whenever $k \in Op(K') $, this implies that the microbundles $\exp(F_k)$ are a Whitney continuous family of $\SR$-microbundles when restricted to $\Op(M')$ or whenever $k \in Op(K')$.

\begin{restatable*}{thm}{wrinklingMicroParametric} \label{thm:wrinklingMicroParametric}
Let $M$ be an $m$-dimensional manifold and let $\SR$ be an open and $\Diff$-invariant relation of dimension $n=m$. Let $K$ be a compact manifold serving as parameter space. Let $F_k : M \rightarrow \SR \subset J^r\Psi$ be a $K$-family of formal solutions. Suppose that they are holonomic over a neighborhood of a closed subset $M' \subset M$ and whenever $k$ belongs to a neighborhood of a closed subset $K' \subset K$.

Then, there exists a Whitney continuous family of tangential $\SR$-microbundles $(TM,\SF_k,G_k)$ that is homotopic to $\exp(F_k)$ via formal $\SR$-microbundles, relative to $M'$ and $K'$. 
\end{restatable*}

\subsubsection{General \texorpdfstring{$\SR$}{R}-microbundles}
In the not necessarily tangential case, we generalize \cref{thm:wrinklingMicro} by starting with a formal $\SR$-microbundle, instead of a formal solution. In this setting it also makes sense to allow for the relation $\SR\subset J^r\Psi$ to be of a different dimension $n$ than the dimension $m$ of the manifold $M$. We obtain the following.

\begin{restatable*}{pro}{HaefligerNonTangential} \label{prop:HaefligerNonTangential} 
    Let $M$ be an $m$-dimensional manifold and let $\SR$ be an open and $\Diff$-invariant relation of dimension $n \geq m$. Let $(E,F)$ be a formal $\SR$-microbundle on $M$, then $(E,F)$ is homotopic to an $\SR$-microbundle via formal $\SR$-microbundles.
\end{restatable*}
We also state the parametric and relative version. Results similar to Propositions \ref{prop:HaefligerNonTangential} and \ref{prop:HaefligerParametricNonTangential} are obtained in \cite{LM} for contact and symplectic structures.
\begin{restatable*}{pro}{HaefligerParametricNonTangential}\label{prop:HaefligerParametricNonTangential}
    Let $M$ be a manifold of dimension $m$ and let $\SR$ be an open and $\Diff$-invariant relation of dimension $n \geq m$. Let $K$ be compact manifold serving as parameter space. Let $(E,\SF_k,F_k)$ be a $K$-family of formal $\SR$-microbundles over $M$, varying continuously in $k$ for the Whitney topology, such that $F$ is holonomic over a neighborhood of a closed subset $M' \subset M$ and whenever $k$ belongs to a neighborhood of a closed subset $K' \subset K$.

    Then, there exists a Whitney continuous family of genuine $\SR$-microbundles $(E,\SF_k',F_k')$ that is homotopic to $(E,\SF_k,F_k)$ via formal $\SR$-microbundles, relative to given lift over $M'$ and $K'$.
\end{restatable*}

\subsection{Classifying spaces of geometric structures} \label{ssec:introGamma}
In \cref{sec:Rgroupoids} we associate to every $\Diff$-invariant relation $\SR$ a (possibly non-Hausdorff and non second-countable) \'etale Lie groupoid $\GammaR$, whose base encodes all germs of solutions of $\SR$ and whose arrow space encodes all symmetries of $\SR$. This was first done by Haefliger for various well-studied geometries (symplectic, contact, complex) \cite{Haef2}, but the authors did not know of such a procedure in the literature of the required generality. There is a one-to-one correspondence (up to concordance) between the $\SR$-microbundles from \cref{def:HaefligerMicro} and principal $\GammaR$-bundles. We will study the latter via the former and vice versa, which is a standard idea in the foliation literature, also going back to Haefliger \cite{Haef2}.

The groupoid $\GammaR$ has a classifying space $B\GammaR$ such that homotopy classes of maps $M \to B\GammaR$ correspond to concordance classes of principal $\GammaR$-bundles. One can construct formal counterparts $\GammaR^f$ and $B\GammaR^f$ as well by considering jets instead of germs of solutions, and they come with natural scanning maps $\GammaR \rightarrow \GammaR^f$ and $B\GammaR \rightarrow B\GammaR^f$. Hence we think of the domain in the next result as the space of \textbf{principal $\GammaR$-bundles} on $M$, and of the target as its formal analogue. 

\begin{restatable*}{thm}{cocycleConnectivityEtale} \label{cor:cocycleConnectivityEtale}
    Let $M$ be an $m$-dimensional manifold and let $\SR$ be an open and $\Diff$-invariant relation of dimension $n \geq m$. The scanning map 
    \[ \Maps(M,B\GammaR) \rightarrow \Maps(M,B\GammaR^f) \]
    is $(n-m)$-connected.
\end{restatable*}

By taking $M = \{*\}$ we obtain the following result. It is a more functorial incarnation of \cref{thm:wrinklingMicro}.
\begin{restatable}{cor}{classifyingSpaceConnectivity} \label{thm:classifyingSpaceConnectivity}
Let $\SR$ be an open and $\Diff$-invariant relation of dimension $n$. Then, the map $B\GammaR \rightarrow B\GammaR^f$ is $n$-connected.
\end{restatable}

\begin{remark}
In the contact and symplectic settings better connectivity statements are known: McDuff proved in \cite{McDuff87} that one obtains $(n+1)$-connectivity in \cref{thm:classifyingSpaceConnectivity} when $\SR$ is the relation defining either contact or symplectic structures. Recently, this was improved to $n+2$ by Nariman~\cite{Nar} in the contact case. 
This depends on both geometries exhibiting Moser stability and having large automorphism groups. 

Our expectation, which we aim to tackle in future work, is that the connectivity of \cref{thm:classifyingSpaceConnectivity} is sharp for most relations $\SR$, using the fact that solutions have local invariants. In fact, for many $\SR$, we expect the $n$th homotopy group of $B\GammaR$ to be very large (uncountable).
\end{remark}

\subsection{Structure of the paper} \label{ssec:contents}

We discuss partial differential relations, Diff-invariance, and \'etale spaces of solutions in \cref{sec:preliminaries1}. In \cref{sec:singularities} we discuss the required theory on singularities of maps. Our theorems about wrinkling into \'etale space are proven in \cref{sec:wrinklingEtale}. \Cref{prop:connectivityEtale}, concerning the connectivity of \'etale space, is proven in \cref{sec:applWrinkling}, where also applications of \cref{thm:wrinklingEtale} to folded symplectic structures and horizontal homotopy groups are discussed. In \cref{sec:prelimGroupoids} we discuss Haefliger's viewpoint on groupoids, principal bundles and microbundles. \Cref{sec:TechResultsPb} contains some technical results related to the smoothing of principal bundles, and the lifting of principal bundles to microbundles. In \cref{sec:Rgroupoids} we develop the language needed to deal with groupoids encoding arbitrary $\Diff$-invariant relations, and we discuss the formal analogue in \cref{ssec:formalAnalogue}. We combine this with wrinkling in \cref{sec:wrinklingHaefliger} to prove our results about Haefliger microbundles and connectivity of the associated classifying space.

\subsection{Acknowledgments}

The authors want to thank Luca Accornero for providing insightful comments on a preliminary version of this article.

The third author is funded by the Dutch Research Council (NWO) via the project ``proper Fredholm homotopy theory'' (project number OCENW.M20.195) of the research program Open Competition ENW M20-3. In the very early stages of this project the second author was funded by the NWO grant 016.Veni.192.013 ``Topology of bracket-generating distributions''. In the very late stages the second author was funded by the NWO grant VI.Vidi.223.118 ``Flexibility and rigidity of tangent distributions''.
\section{Preliminaries: \texorpdfstring{$\Diff$}{Diff}-invariant relations} \label{sec:preliminaries1}

In this section we recall the definition and basic properties of partial differential relations encoded as subsets of jet spaces. We refer the reader to \cite{Gr86,ElMi} as the standard introductions to this viewpoint. We put particular emphasis in discussing:
\begin{itemize}
    \item \'Etale spaces of solutions and the different topologies that they admit (\cref{ssec:etaleSpaces}),
    \item Natural bundles and the nature of $\Diff$-invariance (\cref{ssec:natural}).
\end{itemize}

\subsection{Solutions and formal solutions} \label{ssec:solutions}

Let $M$ be a manifold and $\SR \subset J^r\Psi \rightarrow M$ a partial differential relation. There exists a canonical smooth structure on $J^r \Psi$ induced by the smooth structure on $\Psi$. We say that $\SR$ is an \textbf{open} differential relation, if it is open as a subset of $J^r\Psi$. We write $\Gamma(M,J^r\Psi)$, or sometimes $\Gamma(J^r\Psi)$, for the space of smooth sections of $J^r\Psi$. There are two interesting topologies we can endow $\Gamma(J^r \Psi)$ with:
\begin{enumerate}
    \item the weak topology, for which a basis consists of the sets $\{s \in \Gamma( J^r \Psi) \mid j^r s(K) \subset U\}$ indexed by compact subsets $K \subset M$ and opens $U \subset J^r\Psi$, and
    \item the strong topology, for which a basis consists of the sets $\{s \in \Gamma( J^r \Psi) \mid  j^r s(M) \subset U \} $ indexed by opens $U \subset J^r\Psi$.
\end{enumerate}
If $M$ is closed, these topologies agree, but if $M$ is open they do not.

We define the (weak/strong) \textbf{Whitney $C^r$-topology} as the smallest topology on $\Gamma(\Psi)$ such that the map $j^r : \Gamma(\Psi) \to \Gamma( J^r \Psi)$ is continuous with respect to the (weak/strong) topology on $\Gamma(J^r \Psi)$. The (weak/strong) \textbf{Whitney $C^\infty$-topology} is the union of the (weak/strong) Whitney $C^r$-topologies on $\Gamma(\Psi)$ over all  $r\in \N$. 

Sections of $J^r\Psi$ with image in $\SR$ are called \textbf{formal solutions} and we denote the space of such sections by $\SolFFun$. A \textbf{(genuine) solution} of $\SR$ is a section $f: M \rightarrow \Psi$ whose $r$-jet extension $j^r f: M \rightarrow J^r\Psi$ takes values in $\SR$. We write $\SolFun \subset \Gamma(\Psi)$ for the subspace of solutions of $\SR$. If $\SR$ is open, then $\SolFun$ is open as a subset of $\Gamma(\Psi)$ in the strong topology (but not necessarily in the weak). We endow both $\SolFun$ and $\SolFFun$ with the weak Whitney topology. Further explanations of these choices can be found in \cref{rem:topInf,rem:topWeakStrong}.

With this terminology we define the \textbf{scanning map}
\[ \scanR: \SolFun \rightarrow \SolFFun \]
and the purpose of the h-principle is to understand its connectivity. We say that the \textbf{h-principle} holds for a relation $\SR$ if the map $\scanR$ is a weak homotopy equivalence. 

\begin{remark}\label{rem:topWeakStrong}
    The strong topology forces continuous $K$-families $M\to N$ to be constant outside of a compact in $M \times K$, where $M$ and $N$ are manifolds and $K$ is a compact manifold serving as parameter space. However, h-principles on open manifolds generally depend crucially on the non-compactness of the domain $M$ and therefore require that families are not constant outside a compact. Hence, when we discuss families of solutions, we let them be continuous for the weak topology.
\end{remark}
\begin{remark} \label{rem:topInf}
It is common in the literature to use the Whitney $C^r$-topology in $\SolFun$ and the $C^0$-topology in $\SolFFun$ instead of the Whitney $C^\infty$-topology in both. The benefit of using the Whitney $C^\infty$-topology is that all continuous families are in fact smooth in the parameter so that we can take arbitrarily many derivatives. However, due to smoothing, the resulting spaces are homotopy equivalent. 
\end{remark}

\subsection{\'Etale spaces of solutions} \label{ssec:etaleSpaces}
We observe that we can interpret $\Gamma(\cdot,\Psi)$ as a functor from opens in $M$ to $\Top$ which makes it into a $\Top$-sheaf. By composing the sheaf with the forgetful functor $\Top \to \Set$ we obtain a $\Set$-sheaf. Its associated \textbf{\'etale space} is denoted by $ \Etale\Psi$. 

Explicitly, as a set, $\Etale\Psi$ consists of germs of local sections of $\Psi$ on $M$, that is: 
\[ \Etale\Psi = \{[s]_x \mid s \in \Gamma(U,\Psi), x\in U \subset M \text{ open}\}, \]
where $[s]_x$ denotes the germ of $s$ at $x$. 
A basis for its \textbf{\'etale topology} consists of all subsets $ \{[s]_x \mid x\in U\} $ indexed by all opens $U \subset M$ and sections $s \in \Gamma(U,\Psi)$. This makes the projection $p_b : \Etale \Psi \to M$ into an \textbf{\'etale map}, that is, a local homeomorphism. We moreover endow $\Etale\Psi$ with the canonical smooth structure making $p_b$ into a local diffeomorphism. 

Since we are interested in solutions of a relation $\SR$, we define the subset $\EtSol{M}$ of $\Etale\Psi$ as the \textbf{\'etale space  of (germs of) solutions} of $\SR$.

\subsubsection{The Whitney topologies} \label{sec:EtSolWhitneyTop}
It is also possible to endow $\Etale\Psi$, and hence $\EtSol{M}$ in particular, with the Whitney $C^r$-topologies. They are each defined as the smallest topology such that the projection $p_r : \Etale \Psi \to J^r \Psi$ is continuous. Again, the Whitney $C^\infty$-topology is the union of all these topologies, and we will henceforth refer to it as the \textbf{Whitney topology}. Our convention is to endow  $\Etale\Psi$ and hence $\EtSol{M}$ with the \'etale topology. If we instead endow a subset $A$ of $\Etale\Psi$ with the Whitney topology, we indicate this by writing $A^\Whitney$. 

To distinguish the two topologies it is helpful to have the following observation in mind: a map into $\EtSol{M}$ that is continuous for the \'etale topology is (locally) the lift of a solution, whereas a map that is Whitney continuous does not (necessarily) come from such a lift. Hence maps that are Whitney continuous can be thought of as the germ analogue of a formal solution.

The Whitney topology is coarser than the \' etale one. This entails in particular that every continuous map into $\Etale\Psi$ endowed with the \'etale topology will also be continuous with respect to the Whitney topology. The converse is not true, as illustrated by the following example:
\begin{example} \label{ex:WhitneyEtaleTop}
    Let $\chi: \R\to\R$ be a function that is flat in $0$ (i.e. $j^\infty_0\chi = 0$) and is non-vanishing outside of $0$. Define a path $\gamma: I \to \Etale\R$ by 
    \[
        \gamma(t) = \begin{cases}
            [0]_0 &\text{ if } t\in \Q \\
            [\chi]_0 &\text{ otherwise.}
        \end{cases}
    \]
    Then $\gamma$ is continuous if we endow $\Etale\R$ with the Whitney topology, but not if we use the \'etale topology.
\end{example}

    \subsubsection{The tautological solution} \label{sec:tautsolPullback}
A useful feature of $\EtSol{M}$ is that it carries a canonical solution of $p_b^*\SR$. We observe that we can use $p_b: \EtSol{M} \rightarrow M$ to lift $\Psi \rightarrow M$ to a bundle $p_b^*\Psi$ over $\EtSol{M}$. We then take jets and, since $p_b$ is \'etale, there is a canonical isomorphism between $J^r(p_b^*\Psi)$ and $p_b^*(J^r\Psi)$. From this it follows that there is a well-defined lift $p_b^*\SR \subset J^r(p_b^*\Psi)$ of $\SR$. In this setup we now define:
\begin{definition} \label{def:tautologicalSolutionPullback}
The \textbf{tautological solution} $\tau : \EtSol{M} \to p_b^*\SR$ is defined as $\tau([s]_x) = p_b^*(j^r_x s) $ where $s \in \SolFun(U)$ is a local solution and $x\in U \subset M$. 
\end{definition}

\subsection{Natural bundles and \texorpdfstring{$\Diff$}{Diff}-invariance} \label{ssec:natural}
In \cref{ssec:solutions} we defined a partial differential relation as a subset of $J^r\Psi \to M$. However, we can also intrinsically formulate them, such that they do not depend on the particular manifold over which they live. To this end, we use the notion of a natural fiber bundle~ \cite{nijenhuis1972natural,PalTe}:
\begin{definition} \label{def:natural}
A \textbf{natural fiber bundle} of dimension $n$ is a functor $\Psi$ from the category $\Man_n$ of $n$-manifolds (with embeddings as morphisms) to the category of fiber bundles $\Fib_n$ over $n$-manifolds (with fibered maps as morphisms), such that:
\begin{itemize}
    \item $\Psi(M)$ is a fiber bundle over $M$ for every manifold $M$, and
    \item $\Psi(f) : \Psi(M) \rightarrow \Psi(N)$ covers $f$ for every embedding $f:M \rightarrow N$ between manifolds.
\end{itemize}
\end{definition}
For notational convenience we will often just write $\Psi \rightarrow M$ to mean $\Psi(M)$. 

An example of a natural bundle is the tangent bundle, from which other examples can be derived such as the frame bundle, the cotangent bundle, and their wedge and symmetric products. Another important operation that preserves the naturality of a fiber bundle is taking $r$-jets. Any morphism $\Psi(f): \Psi(M) \to \Psi(N)$ lifts to a morphism $j^r\Psi(f): J^r\Psi(M) \to J^r\Psi(N)$ by considering its action on $r$-jets.

We write $\Gamma(-,\Psi):\Man_n \to \Top$ for the sheaf associating to every $n$-manifold $M$ the space $\Gamma(M,\Psi)$ of sections of $\Psi(M)$ endowed with the Whitney topology.

\subsubsection{Actions on natural bundles}
We recall that a pseudogroup is defined as follows.
\begin{definition} \label{def:pseudogroup}
    Let $M$ be a smooth manifold and let $\Diff_\loc(M)$ be the set of all locally defined diffeomorphisms of $M$. A \textbf{pseudogroup} $G$ over $M$ is a subset $G \subset \Diff_\loc(M)$ such that 
    \begin{itemize}
        \item $\id|_U \in G$ for all open subsets $U$ of $M$,
        \item if $f \in G$ then $f^{-1} \in G$,
        \item if $f,g \in G$ then $f \circ g|_{g^{-1} (U)} \in G$ where $U$ is the domain of definition of $f$, and
        \item if $U$ is an open in $M$ with cover $\{U_i\}_{i \in I}$ by opens in $M$, and if $f\in \Diff_\loc(M)$ such that $f|_{U_i} \in G$ for all $i \in I$, then $f\in G$.
    \end{itemize}
\end{definition}
A trivial example of a pseudogroup is the set $\Diff_\loc(M)$ itself. It hence follows from the definition of a natural bundle that the pseudogroup of local diffeomorphisms $\Diff_\loc(M)$ acts on $\Psi(M)$.

In the same spirit, Thurston and Epstein \cite{EpsteinThurston79} proved that for any natural bundle $\Psi$ there exists a minimal number $\aorder \in \N$ such that $\Psi(f)$ depends only on $j^\aorder f$. Hence when we take $\aorder$-jets of the elements in $\Diff_\loc(M)$, we obtain the groupoid $J^\aorder \Gamma^M$ which acts on $\Psi(M)$.\footnote{We will discuss the groupoids $\Gamma^M$ and $J^\aorder \Gamma^M$ in greater detail in \cref{sec:Gamma}, as they are not needed prior.} The number $\aorder$ is often referred to as the order of the bundle $\Psi$, but we will refer to it as the \textbf{action order} to distinguish it from the order of a differential relation (see \cref{ssec:diffInvariant}).

\subsubsection{\texorpdfstring{$\Diff$}{Diff}-invariant relations} \label{ssec:diffInvariant}

The notion of a natural fiber bundle allows us to abstract the relation from the particular manifold on which it lives.
\begin{definition} \label{def:diffInvariantRelations}
A \textbf{$\Diff$-invariant partial differential relation} of order $r$ and dimension $n$ is a triple $(\SR,\Psi,i)$ in which:
	\begin{itemize}
		\item $\SR$ and $\Psi$ are natural fiber bundles of dimension $n$,
		\item $i: \SR \to J^r\Psi$ is a natural transformation,
	\end{itemize}
	such that $i: \SR(M) \to J^r\Psi(M)$ is an embedding for all $n$-manifolds $M$.
\end{definition}
We then obtain the sheaf $\Gamma(-,\Psi)$ that sends every $n$-manifold $M$ to the space of sections $\Gamma(M,\Psi)$. Two related sheaves are $\SolFun$ and $\SolFFun$, which respectively send $M$ to its space $\SolFun(M)$ of solutions and to the space $\Gamma(-,\SR)$ of sections of $\SR$. Moreover, we have a natural transformation:
\[ \scanR: \SolFun \rightarrow \SolFFun. \]

    \subsubsection{Revisiting the tautological solution} \label{sec:tautsol}
Recall the \' etale space $\EtSol{M}$ of germs of solutions of $\SR$ over $M$. In this section we claim that, due to $\Diff$-invariance, we can speak of $\SR$ and $\Psi$ being bundles over $\EtSol{M}$. This implies that we can write the tautological solution of \cref{def:tautologicalSolutionPullback} as a section of $\Psi \to \EtSol{M}$, without having to refer to pullbacks from $M$. We hence obtain:
\begin{definition} \label{def:tautologicalSolution}
    The \textbf{tautological solution} $\tau : \EtSol{M} \to \SR$ is defined as $\tau([s]_x) = s(x)$ where $s \in \SolFun(U)$ is a local solution and $x\in U \subset M$. 
\end{definition}

In the remainder of this section we discuss the claim that we can speak of $\Psi$ as a bundle over $\EtSol{M}$. The reasoning for $\SR$ follows in an analogous manner. Note that a priori we cannot apply $\Psi$ to $\EtSol{M}$, since natural bundles are defined on the category $\Man_n$ of Hausdorff and second-countable manifolds which does not include $\EtSol{M}$. We now claim that there exists a natural extension of a natural bundle to the category $\Man^\nh_n$ of $n$-dimensional manifolds that are not necessarily Hausdorff or second-countable, with embeddings as morphisms.

We note that $\Psi$ is completely determined by its restriction $\Psi_\Eucl$ to the category $\Eucl_n$ consisting of opens in $\R^n$ and embeddings. That is, if we fix an $n$-manifold $M$ with atlas $\{ \phi_i: U_i \to \R^n \}_{i \in I}$ and transition functions $\{\phi_{ij} : \phi_j(U_i \cap U_j ) \to \phi_i(U_i \cap U_j ) \}_{i,j \in I}$, we see by naturality of $\Psi$ that we can define
\[ \Psi(M) = \bigsqcup_{i \in I} \Psi_\Eucl(\phi(U_i)) / \sim,
\]
where $(i,x) \sim (j,y)$ if $(\Psi_\Eucl(\phi_{ij}))(y)=x$. In more categorical terms, this corresponds to the claim that $\Psi$ can be obtained from $\Psi_\Eucl$ by left Kan extension. We refer to  \cite{mac2013categories} for more details on left Kan extension. 

Let us introduce some notation. We define the category $\Fib_n^\nh$ of fiber bundles over objects of $\Man^\nh_n$ whose fibers are Hausdorff and second-countable manifolds, with fibered maps as morphisms. We also define the inclusion functor $J: \Fib_n \to \Fib_n^\nh$ and note that $J$ preserves colimits.

Every manifold, possibly non-Hausdorff or not second-countable,  can be defined as a colimit generated by $\Eucl_n$ using its maximal atlas. Hence left Kan extension allows us to extend the functor $J \circ \Psi_\Eucl$ to the functor $\tilde \Psi:\Man^\nh_n \to \Fib_n^\nh$. Since left Kan extension is unique up to natural transformation and $J$ preserves colimits, the functors $J \circ \Psi$ and $\tilde \Psi$ agree up to natural transformation, when restricted to $\Man_n$.

In particular we can apply $\tilde \Psi$ to $\EtSol{M}$ to obtain a fiber bundle over it. Therefore, when we speak of $\Psi(\EtSol{M})$ as a bundle over $\EtSol{M}$, we actually refer to $\tilde\Psi(\EtSol{M})$. However, by a slight abuse of notation, we will not make this distinction. We note that the obtained bundle is isomorphic to the pullback bundle $p_b^*\Psi \to \EtSol{M}$ as constructed in \cref{sec:tautsolPullback}, since $\Psi$ is a natural bundle and $p_b$ is a local diffeomorphism.

\section{Preliminaries: singularity theory} \label{sec:singularities}

Our main theorem (\cref{thm:wrinklingEtale}) tells us that any section $F:M\to J^r\Psi$ can be holonomically approximated by allowing wrinkles. To do so, we will construct a map $G : M \to \Etale(\Psi)$ and project down to $J^r\Psi$. In general we cannot expect the map $G$ to be without singularities, as this would imply that any formal solution can be approximated by a solution. However, it turns out that we can make sure that the map $G$ is a submersion with only (mild) singularities of a specific type, known as wrinkles. 

In these preliminaries we first, very briefly, introduce the basics of singularity theory in \cref{sec:basicsSing}. For more details on this topic we refer to \cite{arnold1985singularities,Th55,Bo67}. A more modern reference, in the context of h-principles is \cite[Part 3]{CiElMi}. Next, we discuss some specific singularities of maps $\R^m \to \R^n$ in \cref{sec:singMapsEucl}, working our way up towards the standard wrinkle. We generalize these singularities to maps between manifolds in \cref{sec:singMaps}. In \cref{sec:singFib} we recall the notion of fibered maps, and observe that the previous singularities are examples. We end in \cref{sec:wrinklSubm} by discussing wrinkled submersions. We refer to the original wrinkling papers \cite{ElMiWrinI,ElMiWrinII,ElMiWrinIII,ElMiWrinEmb} for more details.

\subsection{Singularities}\label{sec:basicsSing}

By a singularity of a map we understand the following.
\begin{definition}
    Let $f: M \to N$ be a smooth map between manifolds of dimension $m$ and $n$, respectively. We say that $f$ is \textbf{singular} at a point $p\in M$ if 
    \[
        \dim(d_p f) < \min{(m,n)}.
    \]
\end{definition}

To understand the set of singular points, we introduce the loci of singularities of a given corank. These loci of singularities tell us about where a singularity occurs, but also how severe a singularity is.
\begin{definition} \label{def:locusSing}
    Let $f: M \to N$ be a smooth map between manifolds of respectively dimension $m$ and $n$. The \textbf{locus of singularities} of corank $k$ of $f$ is the set 
    \[
        \Sigma^k(f) =\left\{x\in M \mid \min(m,n) - \dim(df (T_x M))  = k \right\}.
    \]
    In the case that $\Sigma^k(f)$ is a submanifold of $M$, we recursively define $\Sigma^{k,l}(f) = \Sigma^l(f|_{\Sigma^k(f)})$.
\end{definition}
We will not need loci of higher order singularities, but their definitions are straightforward extensions. 

One aspect of singularity theory is the study of what singularities are exhibited by a generic map. Hence we recall the notion of genericity: A subset of a topological space is \textbf{residual} if it is the countable intersection of dense open subsets. Then, a property $\SP$ on the space $C^\infty(M,N)$ of smooth maps $M \to N$ is \textbf{generic} if the set of maps in $C^\infty(M,N)$ satisfying this property is residual. We say that a generic map satisfies a property $\SP$ if $\SP$ is a generic property. An an example, we mention that $\Sigma^k(f)$ from \cref{def:locusSing} is generically a submanifold of $M$.

\subsection{Standard singularities} \label{sec:singMapsEucl}
We now introduce specific types of singularities for maps $\R^m \to \R^n$. 
Throughout this section we assume $m \geq n \geq 1$.

\subsubsection{Fold}

A generic map $M \to \R$ only has Morse singularities. However, when we consider maps $\R^m \to \R^n$ where $\dim(M) \geq \dim(N)$, more generic singularities arise. The easiest of these is the fold.

\begin{definition}
    The \textbf{standard fold} $F: \R^m \to \R^n$ of index $k$ is the map given by
    \begin{align*}
        F = F(m,n,s) : \R^{n-1} \times \R^{m-n+1} &\to \R^{n-1} \times \R, \\
        (t,x) &\mapsto \left(t, - \sum_{i=1}^k x_i^2 + \sum_{j=k+1} ^{m-n+1} x_j^2 \right).
    \end{align*}
\end{definition}
Its locus of singularities is given by 
\[\Sigma^1(F) = \Sigma^{1,0}(F)= \{x=0\}. \]

\subsubsection{Double fold} \label{sec:DoubleFold}
A double fold consists of two folds, which are moreover in a formally canceling position, as we explain after the definition.

\begin{definition}
    The \textbf{double fold} $D: \R^m \to \R^n$ of index $k$ is the map given by 
    \begin{align*}
        D = D(m,n,s) : \R^{n-1} \times \R^{m-n} \times \R^1 &\to \R^{n-1} \times \R, \\
        (t,x,y) &\mapsto \left(t, y^3-3y - \sum_{i=1}^k x_i^2 + \sum_{j=k+1} ^{m-n} x_j^2 \right).
    \end{align*}
\end{definition}

Its locus of singularities is given by 
\[\Sigma^1(D) = \Sigma^{1,0}(F)= \{y = \pm1, x=0\}. \]

When we compute the Jacobian of the double fold, we obtain the matrix
\begin{equation} \label{eq:DFoldJacob}
    \left(\begin{array}{@{}c|c@{}}
I_{n-1} & 
  0 \\
\hline
  \begin{matrix}
  6t_1 y & \dots & 6 t_{n-1} y 
  \end{matrix}  &
  \begin{matrix}
  -2x_1 & \dots & 
            2x_{m-n} & 3(y^2 -1)
  \end{matrix}
\end{array}\right).
\end{equation}

It is the bottom right submatrix of the Jacobian that causes the differential of $D$ to not be of full rank. However, we can modify the term in the bottom right corner over the set $\Op(\{y\in[-1,1],x=0\}) $ such that it becomes positive everywhere. This defines (up to homotopy) the \textbf{regularized differential} $\RD_F(D): T\R^m \to T\R^n$ of $D$. Hence we say that the double fold formally cancels.

\subsubsection{Cusp} 
When we consider maps between Euclidean spaces of higher dimension, we obtain more generic singularities than just folds. In particular we obtain a singularity known as the cusp, which is already generic for maps $\R^2\to\R^2$. We have illustrated the cusp in \cref{fig:cusp}.

\begin{definition}\label{def:strCusp}
    The \textbf{standard cusp} $C: \R^m \to \R^n$ of index $k$, where $n>1$, is the map given by 
    \begin{align*}
        C = C(m,n,s) : \R^{n-1} \times \R^{m-n} \times \R &\to \R^{n-1} \times \R, \\
        (t,x,y) &\mapsto \left( t,y^3 - 3 t_1 y - \sum_{i=1}^k x_i^2 + \sum_{j=k+1} ^{m-n} x_j^2 \right).
    \end{align*}
\end{definition}
For the locus of singularities we obtain 
\[ \Sigma^1(C) = \{y^2=t_1, x=0\} \]
which decomposes as $\Sigma^1(C) = \Sigma^{1,0}(C) \cup \Sigma^{1,1}(C)$ where
\[
    \Sigma^{1,0} = \{y^2=t_1>0, x=0\} \qquad \text{and} \qquad \Sigma^{1,1} = \{t_1=x=y=0\}. 
\] 

\begin{figure}[h]
    \centering
    \begin{subfigure}[t]{0.4\linewidth}
        \centering
        \includegraphics[width=0.7\linewidth,page=2,clip=true,trim = 14cm 0 2cm 0]{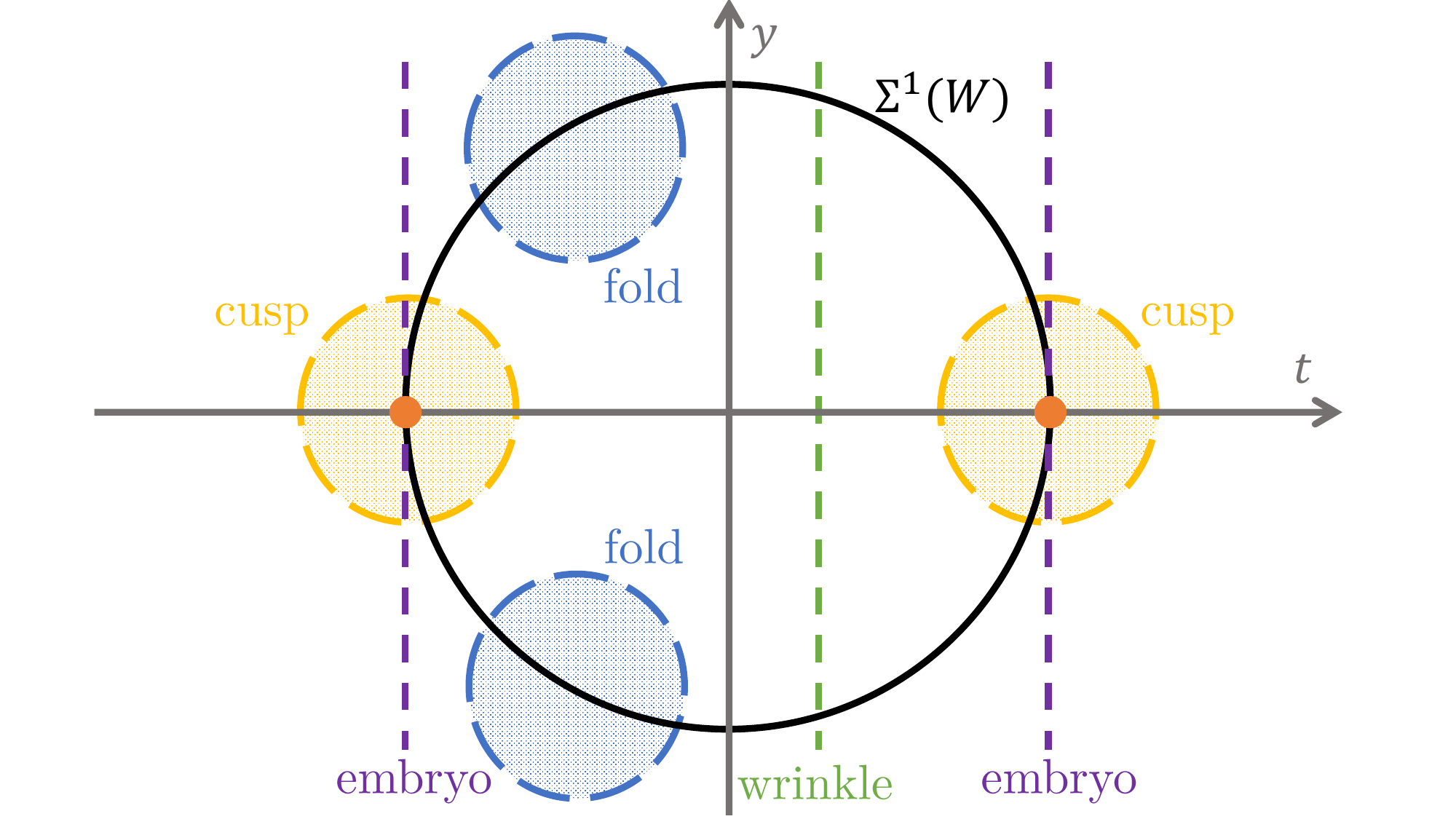}
        \caption{The locus of singularities of the standard cusp. We indicated in green a path in the domain.} \label{fig:cuspDomain}
    \end{subfigure}
    \hspace{0.05\linewidth}
    \begin{subfigure}[t]{0.4\linewidth}
        \centering
        \includegraphics[width=0.7\linewidth,page=3,clip=true,trim = 14cm 0 2cm 0 ]{Fig_singularities.pdf}
        \caption{The image of the locus of singularities of the standard cusp. We indicated in green the image of the path in the domain from \cref{fig:cuspDomain}, although we have slightly separated the overlapping segments.} \label{fig:cuspImage}
    \end{subfigure}
    \hfill
    \begin{subfigure}[t]{0.4\linewidth}
        \centering
        \includegraphics[width=\linewidth,page=4,clip=true,trim = 8cm 3cm 2cm 3cm ]{Fig_singularities.pdf}
        \caption{The graph of the last component of the cusp, where the horizontal plane represents the domain and the vertical direction the codomain.} \label{fig:cuspGraph}
    \end{subfigure}
    \hspace{0.05\linewidth}
    \begin{subfigure}[t]{0.4\linewidth}
        \centering
        \includegraphics[width=\linewidth,page=5,clip=true,trim = 3cm 0 3cm 1cm ]{Fig_singularities.pdf}
        \caption{The cusp interpreted as a family of maps. For $t<0$ we have a non-singular map, for $t=0$ an embryo, and for $t>0$ two folds in canceling position (that is, a wrinkle).} \label{fig:cuspFamily}
    \end{subfigure}
	\centering
	\caption{Illustrations of the cusp.} \label{fig:cusp}
\end{figure}

\subsubsection{Wrinkle}\label{sec:wrinkle}
We combine the cusp and the (double) fold in a formally canceling manner in a singularity called the wrinkle. It is in some sense the simplest manner in which folds can cancel, because the first order singularity locus of the wrinkle is just the sphere, and not a topologically more complicated space. We have illustrated the wrinkle in \cref{fig:locusWrinkle}.

\begin{definition}\label{def:stdWrinkle}
    The \textbf{standard wrinkle} $W: \R^m \to \R^n$ of index $k$ is the map given by
    \begin{align*}
        W = W(m,n,s) : \R^{n-1} \times \R^{m-n} \times \R  &\to \R^{n-1} \times \R, \\
         (t,x,y) &\mapsto \left(t,y^3 + 3(|t|^2-1)y - \sum_{i=1}^k x_i^2 + \sum_{j=k+1} ^{m-n} x_j^2 \right).
    \end{align*}
\end{definition}

For the locus of singularity we obtain
\[\Sigma^1(W) = \{|t|^2 + y^2 =1 , x=0 \}\]
which decomposes further as $\Sigma^1(W) = \Sigma^{1,0}(W) \cup \Sigma^{1,1}(W)$, where
\[
    \Sigma^{1,0}(W) = \{|t|^2 + y^2 =1,y\neq0 , x=0 \} \qquad \text{ and} \qquad
     \Sigma^{1,1}(W) = \{|t|^2 =1 , x=y=0 \}.
\]

The lower and upper hemispheres $\Sigma^{1,0}(W)$ consist of folds, whereas the equator $\Sigma^{1,1}(W)$ consists of cusps. The wrinkle restricted to $\Sigma^{1,1}(W)$ is a regular map, so we do not need to consider higher order loci of singularities.

\begin{figure}[h]
    \centering
    \includegraphics[width=0.6\linewidth,page=1]{Fig_singularities.pdf}
    \caption{Locus of singularities of a wrinkle. We have indicated what singularity we obtain when restricted to the indicated domain.}
    \label{fig:locusWrinkle}
\end{figure}

When we compute the Jacobian of the wrinkle, we obtain a matrix similar to the Jacobian for the double fold from \cref{eq:DFoldJacob}. The difference is the term in the bottom right corner, which is now $3(y^2 +|t|^2-1)$. However, we again see that, up to homotopy, there is a canonical way of modifying the term in the bottom right corner over $\Op (\D^n)$ such that it becomes positive everywhere. This formally removes the singularity and defines (up to homotopy) the \textbf{regularized differential} $\RD(W): T\R^m \to T\R^n$ of $W$. 

\subsubsection{Embryo}
The event in which the wrinkle is born (or dies) is called the embryo. Hereby we mean that if we generically perturb the embryo, we either obtain a regular map, or a wrinkle. Hence the embryo is not a stable singularity (but it is if we consider families of maps, as in \cref{sec:singFib}). 

\begin{definition}
    The \textbf{standard embryo} $E: \R^m \to \R^n$ of index $k$ is the map given by
    \begin{align*}
        E = E(m,n,s) : \R^{n-1} \times \R^{m-n} \times \R &\to \R^{n-1} \times \R, \\
         (t,x,y) &\mapsto \left(t,y^3 - 3 |t|^2 y - \sum_{i=1}^k x_i^2 + \sum_{j=k+1} ^{m-n} x_j^2 \right).
    \end{align*}
\end{definition}
We obtain
\[\Sigma^1(E) =\{ y^2 = |t|^2, x=0\} \]
which decomposes as $\Sigma = \Sigma^{1,0}(E) \cup \Sigma^{1,1}(E)$, where
\[
\Sigma^{1,0}(E) = \{ y^2 = |t|^2>0, x=0\} \qquad \text{and} \qquad
    \Sigma^{1,1}(E) = \{t=x=y=0\}.
\]

\subsection{Singularities of maps} \label{sec:singMaps}
In \cref{sec:singMapsEucl} we discussed the standard singularities for maps $\R^m \to \R^n$. Hence in \cref{sec:equivMaps} we discuss the notion of equivalence of maps, which allows us in \cref{sec:singMapsMfd} to speak of singularities of maps $M \to N$ between manifolds where $\dim(M) \geq \dim(N)\geq 1$.

\subsubsection{Equivalence of maps} \label{sec:equivMaps}
We introduce the following notion of equivalence for maps. It tells us that locally, around a given subset, the given maps agree up to diffeomorphism.
\begin{definition} \label{def:equivMaps}
    A map $f: M \to N$ is \textbf{equivalent} at $A \subset M$ to a map $g: M' \to N'$ at $A'\subset M'$ if there exist diffeomorphisms $\phi: \Op(A) \to \Op(A')$ and $\psi: \Op(f(A)) \to \Op(g(A'))$ such that $f = \psi g \phi^{-1}$.
\end{definition}

\subsubsection{Singularities of maps between manifolds} \label{sec:singMapsMfd}
With the notion of equivalence for maps, we extend the standard singularities of maps $\R^m \to \R^n$ to singularities of maps between manifolds. We note that the domains we consider differ: for the fold, cusp and embryo, we consider the respective standard maps around the origin, whereas for the double fold and the wrinkle, we consider larger domains.
\begin{definition} \label{def:singMapCFEW}
    A map $f: M \to N$ has a \textbf{singularity of ... type} 
    \begin{itemize}
        \item \textbf{fold/cusp/embryo}: in $x \in M$ if $f$ at $x$ is equivalent to the standard fold/cusp/embryo at $0$.
        \item \textbf{double fold}: in a closed subset $A \subset M$ if $f$ at $A$ is equivalent to the standard double fold at $\{0\} \times [-1,1] \times \{0\}$.
        \item \textbf{wrinkle}: in a closed subset $A \subset M$ if $f$ at $A$ is equivalent to the standard wrinkle at $\overline{D^n}$.
    \end{itemize}
\end{definition}

We note that the singularities of the standard wrinkle may seem to cancel out, as the folds on the upper and lower hemisphere are born and die in the cusps on the equator. Similarly, the singularities of the double fold seem to cancel out. However, this cancellation is only possible when the domain of the double fold or wrinkle is considered to be $\R^n$. We, on the other hand, restrict their domains to much smaller neighborhoods. This implies that the double fold and wrinkle can no longer be canceled out relative to the boundary of the model (but it could cancel out globally). However, we have already seen in \cref{sec:DoubleFold,sec:wrinkle} that they do cancel out in a formal sense.

\subsection{Their fibered nature} \label{sec:singFib}
In this section we discuss the fibered nature of the singularities in \cref{sec:singMapsEucl}. To this end, we first recall the necessary terminology. 

\subsubsection{Fibered maps}
A map $f: M \to N$ is \textbf{fibered} over a manifold $K$ along two submersions $p: M \to K$ and $q: N \to K$ if the diagram
\[\begin{tikzcd}
       M \arrow[r,"f"] \arrow[d,"p"] & N \arrow[d,"q"] \\
       K \arrow[r,equal] & K
   \end{tikzcd}\]
commutes. A \textbf{fibered homotopy} between two maps $f,g: M \to N$ is then a family $h_t$ of fibered maps such that $h_0=f$ and $h_1 =g$. When considering a fibered map $f: M \to N$, it is also natural to consider the \textbf{fibered differential}, which is the map $df|_{T_K M} : T_K M \to T_K N$, where $T_K M$ and $T_K N$ are respectively the subbundles $\ker(p) \subset TM$ and $\ker(q) \subset TN$. We note that the fibered differential is itself a map fibered over $K$.

We generalize the notion of equivalence for maps from \cref{def:equivMaps} to the fibered setting as follows:
\begin{definition}\label{def:equivMapsFib}
    Two fibered maps $f: M \times K \to N \times K$ and $g: M'\times K'\to N'\times K'$ are \textbf{(fibered) equivalent} along $ A\subset M \times K$ and $A'\subset M' \times K'$ and $B \subset K$ and $B'\subset K'$ with $\pr_K(A) \subset B$ and $\pr_K(A') \subset B'$ if there exist diffeomorphisms $\phi: \Op(A) \to \Op(A')$ and $\psi:\Op(f(A)) \to \Op(f(A'))$ and $\chi: \Op(B) \to \Op(B')$ such that the following diagram
    \[\begin{tikzcd}
        M \times K \arrow[dr,"\phi"] \arrow[rrrr,"f"] \arrow[dddrr,"\pr_K",swap] & & & & N \times K \arrow[dl,"\psi",swap] \arrow[dddll,"\pr_K"]\\
       & M'\times K' \arrow[rr,"g"] \arrow[dr,"\pr_{K'}",swap] & & N'\times K' \arrow[dl,"\pr_{K'}"] & \\
       & & K' & & \\
       & & K \arrow[u,"\chi"] & & 
   \end{tikzcd}\]
    commutes when restricted to the domains of map listed in the diagram.
\end{definition}

\subsubsection{Fibered standard singularities}\label{sec:FibStdSing}
We now make the observation that each of the standard singularities in \cref{sec:singMapsEucl} is fibered over $\R^{n-1}$. 

In particular, when we go back to the cusp from \cref{def:strCusp} we see that if we fix $t_1 \in \R^{n-1}$ such that $t_1>0$, the resulting map is a double fold. If instead $t_1<0$ we obtain a regular map. If $t_1=0$ we obtain a map known as the double fold embryo, which we have no explicit use for, and hence did not define separately.

Similarly, when we go back to wrinkle from \cref{def:stdWrinkle}, we see that if we fix $t\in \R^{n-1}$ such that $|t|>1$, the resulting map is non-singular. If instead $|t|<1$, we obtain a wrinkle of lower dimension, and if $|t|=1$ an embryo. In the case that $n=1$, so that $t\in\R$, the wrinkle coincides with the double fold.

\subsubsection{Fibered regularized differential}
For the double fold $D$ we introduced in \cref{sec:DoubleFold} its regularized differential $\RD_D(D)$. When interpreting $D$ as a map fibered over $\R^{n-1}$, we define its \textbf{fibered regularized differential} as the regularization of its fibered differential $d_{\R^{n-1}}D$.

We define the fibered regularized differential similarly for the wrinkle $W$.

\subsubsection{Singularities of fibered maps between manifolds} \label{sec:singFibMfds}
Analogously to \cref{def:singMapCFEW}, a fibered map $M \to N$ has a singularity of \textbf{fibered fold/cusp/embryo/double fold/wrinkle} type if it is fibered equivalent to the corresponding standard singularity as in \cref{def:singMapCFEW}.

\subsection{Wrinkled submersions} \label{sec:wrinklSubm}
Eliashberg and Mishachev study in \cite{ElMiWrinI} submersions with wrinkle singularities, to which we will refer to as wrinkled submersions. We point out that their definition in particular makes sure that wrinkles are not nested.
\begin{definition} \label{def:wrMap}
    A map $f: M \to N$ between manifolds $M$ and $N$ with $\dim M \geq \dim N$, is a \textbf{wrinkled submersion} if there exists a collection of disjoint opens $\{B_i\}_{i \in I}$ in $M$, each diffeomorphic to $\R^m$, such that: 
    \begin{itemize}
        \item $f | _{M \setminus \sqcup_i B_i}$ is regular, and 
        \item $f|_{B_i} $ is a wrinkle.
    \end{itemize}
\end{definition}

Analogously, we say that a family of maps $F_k : M \to N$, where $k$ varies in a compact manifold $K$, is a \textbf{fibered family of wrinkled submersions}, if it is a fibered family of submersions whose singularities are fibered wrinkles. 

\subsubsection{Regularized differential}\label{sec:regDiff}
Given a (fibered) wrinkled submersion $s: M \to N$ as in \cref{def:wrMap}, we define its (fibered) regularized differential $\RD(s)$ on $M \setminus \sqcup_i B_i$ as its ordinary (fibered) differential. Locally, on each of the $B_i \subset M$, we define it as the (fibered) regularized differential from \cref{def:stdWrinkle}. By a slight abuse of notation we will also refer by (fibered) regularized differential to the first jet of $W$ modified in a similar manner.

\section{Wrinkling into \'etale space} \label{sec:wrinklingEtale}

In this section we prove our statements about wrinkled solutions of partial differential relations. We prove the existence h-principle of \cref{thm:wrinklingEtale} locally in \cref{ssec:wrinklingCube} and globally in \cref{sec:ProofWrinklingEtale} (including their respective relative and parametric versions).

\subsection{Wrinkling in a cube} \label{ssec:wrinklingCube}
The key observation behind \cref{thm:wrinklingEtale} is the upcoming proposition (and its fibered counterpart \cref{lem:wrinkFib}). The main idea of its proof is that, given a formal section $F: I^n \to J^r\Psi$ over the $n$-dimensional unit cube $I^n$, the parametric version of holonomic approximation provides us with a family of holonomic approximations $F_y$ each defined on a domain that is diffeomorphic to a neighborhood of $I^{n-1} \times \{y\}$. Hence each of these $F_y$ lift on their domain of definition to a map into $\Etale(\Psi)$. With a construction similar to the one for wrinkled submersions in \cite[Lemma 2.3A]{ElMiWrinI}, we glue the family of holonomic approximations to one single map into $\Etale(\Psi)$. This gluing introduces the wrinkle singularities.

\begin{proposition} \label{lem:wrink}
Let $\Psi \rightarrow I^n$ be a fiber bundle over the $n$-cube. Consider a section $F:I^n \to J^r\Psi$ such that $F = j^r f$ on a neighborhood $V$ of $\partial I^n$, for some $f:I^n \to \Psi$. Then, there exists a wrinkled submersion $G:I^n \to \Etale{\Psi}$ such that:
\begin{itemize} 
    \item $p_r \circ G$ and $F$ are $C^0$-close and agree on $\partial I^n$, and \label{lem:wrinksecdist}
    \item the wrinkled submersion $p_b \circ G: I^n \to I^n$ is $C^0$-close to the identity, and 
    \item the regularized differential of $p_b \circ G$ is homotopic to $j^1(\id)$ via formal submersions relative to $\partial I^n$.
\end{itemize}
\end{proposition}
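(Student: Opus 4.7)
The plan is to glue together locally defined holonomic approximations of $F$ by inserting narrow transition bands where the projection to the base acquires cancelling fold pairs, in the spirit of the one-dimensional zig-zags of Eliashberg--Mishachev. The wrinkles are what allow us to reconcile neighbouring Taylor approximations that would otherwise disagree.

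First, subdivide $I^n$ into a fine grid of sub-cubes $\{C_\alpha\}$ of side $\delta$ so small that, on each (slightly enlarged) $C_\alpha$, the section $F$ can be $C^0$-approximated by the $r$-jet of a genuine section $g_\alpha : C'_\alpha \to \Psi$; concretely, take $g_\alpha$ to be the order-$r$ Taylor polynomial at the centre of $C_\alpha$ of any smooth lift of $F$, after choosing a local trivialisation of $\Psi$. Over the neighbourhood $V$ of $\partial I^n$ we simply set $g_\alpha=f$. Each $g_\alpha$ gives a local holonomic lift $G_\alpha : C_\alpha \to \Etale{\Psi}$, $x\mapsto [g_\alpha]_x$, which will be the building block of $G$ on the interior of $C_\alpha$.

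Second, on a thin band around each codimension-$1$ face separating two adjacent cubes $C_\alpha,C_\beta$, use coordinates $(x,t)$ with $t$ transverse to the face and build a $1$-dimensional zig-zag $\rho:[-\eta,\eta]\to[-\eta,\eta]$ whose only singularities are a cancelling pair of folds. The map $(x,t)\mapsto(x,\rho(t))$ is a wrinkled submersion $C^0$-close to the identity and formally homotopic to it. It covers the band three times, partitioning its preimage into three sheets. On the two outer sheets we prescribe the germs $[g_\alpha]$ and $[g_\beta]$ respectively; on the middle sheet we prescribe a bridge germ $[h_{\alpha\beta}]$, where $h_{\alpha\beta}$ is some $C^r$-small perturbation defined on the band. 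The analogous construction is carried out, inductively over the codimension of the cells of the grid, on thickenings of lower-dimensional skeleta, where several such $1$-dimensional zig-zags are multiplied together to produce higher-order wrinkles at the corners.

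The main obstacle is that this three-sheeted prescription defines an \emph{\'etale-smooth} map into $\Etale{\Psi}$ only if the germs literally coincide along each fold locus, not merely their $r$-jets. To arrange this I flatten: modify $g_\alpha$ near the first fold of $\rho$, $h_{\alpha\beta}$ near both folds, and $g_\beta$ near the second fold by $C^\infty$ bump functions so that consecutive sections agree on a full neighbourhood of each fold locus. Because all of $g_\alpha$, $h_{\alpha\beta}$, $g_\beta$ are already $C^r$-close (being approximations of $F$ in a region of diameter $O(\delta)$), these bumps introduce a correction whose $C^0$-size in $J^r\Psi$ is controlled by $\delta$ and can be made as small as desired by refining the grid; the same bumps are chosen so that no modification is performed over $V$, preserving the boundary condition $G=[f]_\cdot$. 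The resulting $G$ is then \'etale-smooth, $p_r\circ G$ is $C^0$-close to $F$, and $p_b\circ G$ is by construction a wrinkled submersion whose wrinkles are precisely the products of cancelling zig-zag folds along the cells of the grid, which witnesses both the $C^0$-closeness to $\id$ and the formal homotopy to it.
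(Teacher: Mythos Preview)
Your grid-based approach differs from the paper's: there one slices $I^n$ by the last coordinate, applies \emph{parametric holonomic approximation} to produce a smooth one-parameter family of holonomic sections $f_y$ over thickened slices $I^{n-1}\times(y-\delta,y+\delta)$, and then inserts wrinkles only in that single direction (via an explicit map $\gamma_{\delta,\ell}$) to stitch finitely many of the $f_{t_i}$ into one map. Because the $f_y$ come as a smooth family, passing from one to the next costs nothing in $J^r$, and because all wrinkling happens in one coordinate there are no corners.

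Your proposal has a genuine quantitative gap at the flattening step. Take two adjacent Taylor representatives $g_\alpha,g_\beta$ at centres a distance $\delta$ apart; their difference $u=g_\alpha-g_\beta$ satisfies $\|u\|_{C^k}=O(\delta)$ for every $k\le r$ (for the formal section $F=(0,1,0,\dots,0)$ one has $u\equiv c_\beta-c_\alpha$). Gluing with a bump $\chi$ supported on width $w$ produces a correction whose $r$-th derivative contains $(\partial^r\chi)\,u$, of size $\sim w^{-r}\delta$. Since the transition band sits inside cubes of side $\delta$ you have $w\lesssim\delta$, hence this term is $\gtrsim\delta^{1-r}$, which does not tend to zero as the grid is refined (and blows up for $r\ge 2$). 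So $p_r\circ G$ is \emph{not} $C^0$-close to $F$ across the bands; this is precisely the obstruction that forces the wiggling in holonomic approximation, and the paper avoids it by using that theorem as a black box rather than interpolating Taylor polynomials by hand. A second, independent problem is the corner construction: a product of one-dimensional zig-zags in $k$ directions has fold locus a union of coordinate hyperplanes meeting in corners, which is not a wrinkled submersion in the Eliashberg--Mishachev sense, and the $3^k$ sheets over a codimension-$k$ face require compatible germ assignments and flattenings that you do not supply. The paper's one-directional slicing avoids this entirely.
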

\begin{proof}
Throughout the proof it will be convenient to assume that $F$ is defined on an open neighborhood $\Op(I^n) \subset \R^n$. This can always be arranged by replacing $F$ by its restriction to $[-1+\alpha,1-\alpha]^n \subset I^n$ for some small $\alpha > 0$. We let $x = (\tilde{x},x_n)$ denote the coordinates on $I^n = I^{n-1} \times I$. We fix a metric in the cube and in $\Psi$ so that we can make sense of distances in the cube and between sections of $\Psi$.

\pfstep{Holonomic approximation} We define a $1$-parameter family 
\[ F_y: U_y  \to J^r\Psi \]
by the formula $F_y(x) = F(x)$, where $U_y\subset \R^n$ is an open neighborhood of $I^{n-1}\times \{y\}$. This family can be made holonomic using the $1$-parametric version of holonomic approximation \cite{ElMi} by wiggling the core $I^{n-1} \times \{y\}$ of each strip $U_y$ relative to $V$, which we now make precise:

Recall that holonomic approximation constructs a family of diffeotopies $h_{y,t}:U_y  \to U_y$, that wiggle (the core in) the domain of $F_y$. In our case we impose that this wiggling takes place in the direction of the last variable $x_n$. We point out that the amount of wiggling depends on the size of the derivatives of $F_y$. Because of compactness, we have uniform bounds for the derivatives and we can hence find a uniform amount of wiggling with which we are able to make all of the $F_y$ holonomic. In particular we can arrange for the necessary wiggling of all cores $I^{n-1} \times \{y\}$, where $y$ is away from $\partial I$, to be achieved by a single diffeotopy. We can assume the diffeotopy to be the identity close to $\partial I^n$, since $F$ is already holonomic there.

That is, holonomic approximation tells us that: For any $\eta > 0$ there exists a diffeotopy
\[ h_t: \Op(I^{n}) \to \Op(I^{n})  \]
with $t \in [0,1]$
and a smooth family of holonomic sections 
\[ \tilde F_y : \tilde{U}_y  \to J^r\Psi \]
with $y\in I$, each defined on an open neighborhood $\tilde{U}_y\subset U_y$ of $h_1(I^{n-1} \times \{y\})$,
such that:
\begin{enumerate}[label=(\roman*)]
    \item $h_t = \id$ on $\Op(\partial I^{n})$ and if $t=0$, 
    \item $d_{C^0}(h_t(x),x)<\eta$ for all $x \in \Op(I^{n})$ \label{lab:wrdiffsmall},
    \item $d_{C^0}(\tilde{F}_y(x),F_y(x)) < \eta$ \label{lab:wrhaclose} for $x \in \tilde{U}_y$ and $y \in I$, and
    \item $\tilde{F}_y = F$ on $\tilde{U}_y \cap \Op(\partial I^{n-1} \times I)$ for all $y \in I$, and on $\tilde U_y$ for $y \in \Op(\partial I)$.
\end{enumerate}
    
By compactness there exists $\delta >0 $ such that each $\tilde F_y$ is holonomic on $h_1(I^{n-1} \times(y - \delta,y + \delta))$. Denote by $\tilde{f}_y: h_1(I^{n-1} \times (y-\delta,y+\delta)) \to \Psi$ the family of functions such that $\tilde{F}_y = j^r \tilde{f}_y$. Taking germs we obtain a family of sections
\begin{align*}
    s_y: h_1(I^{n-1} \times (y-\delta,y+\delta)) &\to \Etale(\Psi) \\
    x &\mapsto [\tilde{f}_{y}]_x,
\end{align*}
for all $y \in I$.

At this point we reparametrize the cube and just work in the coordinates given by $h_1$. This allows us to assume that the maps $\tilde F_y$, $\tilde f_y$, and $s_y$ all have $I^{n-1} \times (y-\delta,y+\delta)$ as domain.

\pfstep{Gluing the holonomic sections}
Each $\tilde F_y$ is a $C^0$-close holonomic approximation of our starting data, so the rest of the proof amounts to using wrinkling to `glue together' the family $s_y$ in order to obtain a single map $M \to \Etale{\Psi}$ whose projection to $J^r\Psi$ is a $C^0$-close (multivalued) holonomic approximation of $F$. This idea is illustrated in \cref{fig:wrinkling}. Our gluing procedure is very similar to the one used by Eliashberg and Mishachev in \cite[Lemma 2.3A]{ElMiWrinI}.
\begin{figure}[h]
    \centering
    \includegraphics[width=0.7\textwidth,page=1]{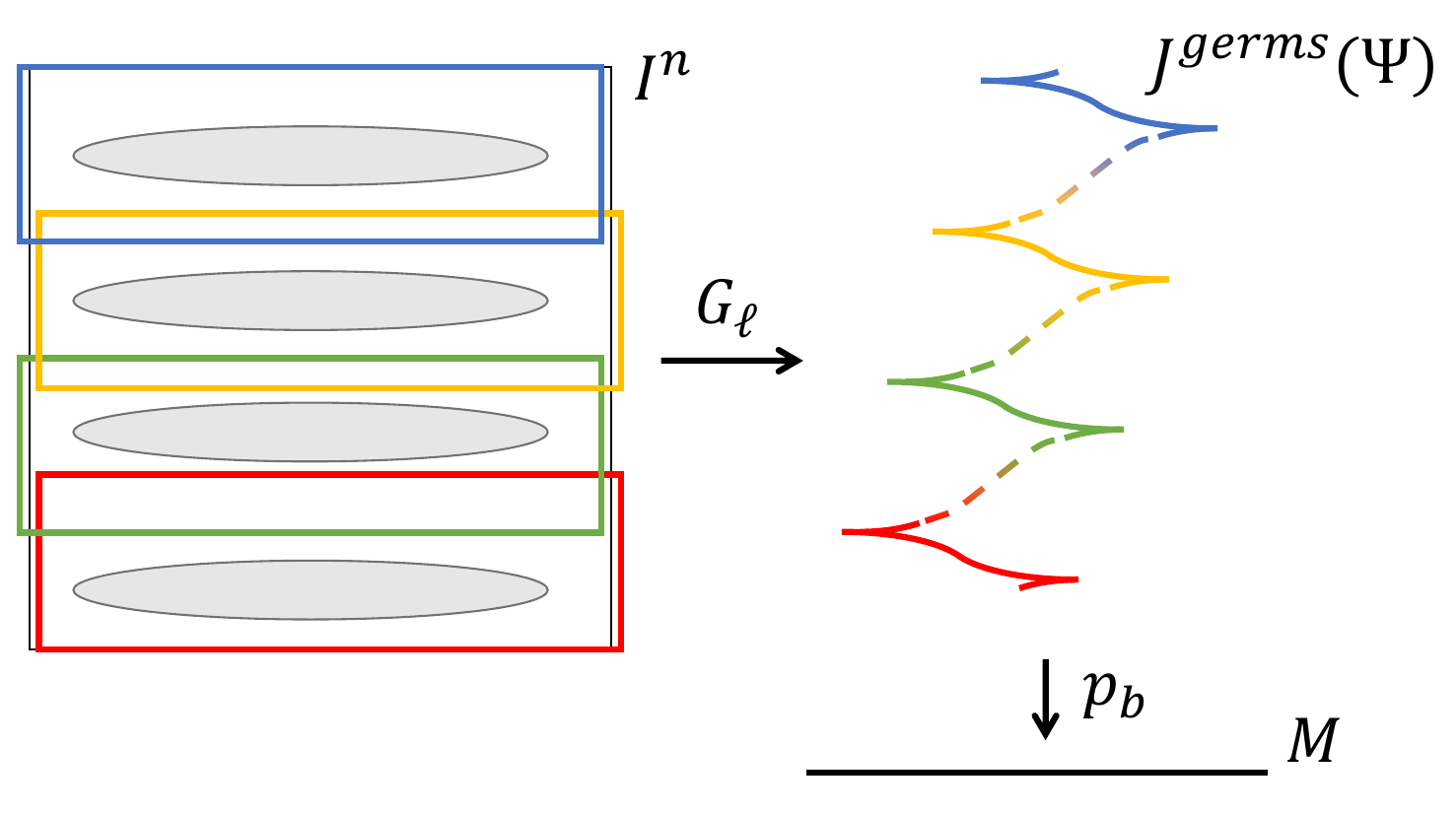} 
    \caption{A sketch of wrinkling map $G_\ell$: on the left we see its domain $I^n$ with the domains of four $s_y$. The grey ovals indicate where the wrinkling takes place. On the right we see the image of $G_\ell$ in $\Etale \Psi$. We have drawn the wrinkles there in the color of the strip in which they take place. The dotted lines indicate the interpolation in between the strips. Note that the wrinkles do not overlap in the domain, but they do in the target when projected to $M$.} \label{fig:wrinkling}
\end{figure}
 
Let $\ell \in \N$ to be determined later and define $\Delta_i \subset I$ as the interval of length $\frac{9}{16\ell}$ centered around $t_i = \frac{2i-1}{2\ell}$ for $i = 1,\dots, \ell$. Let $\lambda : I \rightarrow I$ be a function such that:
	\begin{itemize}
		\item $\lambda(0) = 0$ and $\lambda(1) = 1$,
		\item $\lambda (t) = t_i$ on each $\Delta_i$, and
		\item $0 < \frac{d\lambda}{d t} (t)<3$ if $t \in I \setminus \cup_{i=1}^\ell \Delta_i$.
	\end{itemize}
Then there exists a positive and increasing function $\nu:\N\to\R$ such that $\lim_{\ell\to\infty}\nu(\ell)= 0$ and $\lambda(x) \in [x-\nu(\ell),x+\nu(\ell)]$ for all $x\in I$.

We also define the map
\begin{align*}
    \gamma_{\delta,\ell}&:I^n \to I^n \\
    (\tilde x,x_n) &\mapsto (\tilde x,\tilde \gamma_{\delta,\ell}(x)), 
    \intertext{which is fibered over $I^{n-1}$ and where $\tilde \gamma_{\delta,\ell}:I^n \to I$ is defined by\footnotemark}\noalign{\footnotetext{Compared to \cite[Lemma 2.3A]{ElMiWrinI} we have the extra factor $\delta$ in our definition of $\tilde\gamma_{\delta,\ell}$ since our $s_y$ are only defined over $I^{n-1} \times (y-\delta,y+\delta)$. This implies we also get an extra factor $\delta$ in the lower bound on $\dfrac{d \tilde \gamma_{\delta,\ell}}{dt}$, but since we take $\ell$ arbitrarily large, all the estimates of \cite[Lemma 2.3A]{ElMiWrinI} still hold true.}}
    \tilde\gamma_{\delta,\ell}(x) &= x_n + \chi(\tilde x)  \phi_\ell(x_n) \delta \sin(2\pi \ell x_n).
\end{align*}

Here $\phi$ and $\chi$ are two bump functions making sure that the following holds:\footnote{More precisely, we choose the bump functions as in \cite[Lemma 2.3A]{ElMiWrinI}: We choose a bump function $\phi_\ell:I \to I$ such that $\phi|_{[0,\frac1{16\ell}] \cup [1-\frac1{16\ell},1]} = 0$ and $\phi|_{[\frac1{8\ell},1-\frac1{8\ell}]} = 1$. We let $\chi:I^{n-1} \to I$ be another bump function such that $\chi$ is equal to $0$ on a neighborhood of $\partial I^{n-1}$ and $1$ outside a larger neighborhood of $\partial I^{n-1}$. 
We also require that the superlevel sets $\{x \in I^{n-1} \mid \chi(x) \geq t \}$ are convex for $t\in[0,1]$, and that the boundary of the level set $\{x \in I^{n-1} \mid \chi(x) = 1 \}$ is contained in $V$. }
\begin{itemize}
    \item $\gamma_{\delta,\ell}$ has a single wrinkle on $I^{n-1} \times \Delta_i$,
    \item $\gamma_{\delta,\ell}$ is a submersion on $I^n \setminus \bigcup_i I^{n-1} \times \Delta_i$, and
    \item $\gamma_{\delta,\ell}$ is homotopic to the identity relative to the given neighborhood $V$ of $\partial I^n$. 
\end{itemize}

Then we define a map $G_\ell : I^n \rightarrow \Etale{\Psi}$ by
	\[
	G_\ell (x) = s_{\lambda(x_n)}(\gamma_{\delta,\ell}(x)).
	\]

We point out that each wrinkle of $\gamma_{\delta,\ell}$ takes place over $I^{n-1} \times \Delta_i$ where $\lambda$ is constant. Hence over $I^{n-1} \times \Delta_i$, the map $G_\ell$ takes values in a basic open of \'etale space, specifically the graph of an $s_y$ where $y$ is fixed, which is diffeomorphic to an open in Euclidean space. We furthermore check that 
	\begin{itemize}
		\item $G_\ell(x) = [f]_x $ for $x \in \Op(\partial I^n)$,
		\item $G_\ell$ is a submersion over $I^{n-1} \times (I \setminus \cup_i \Delta_i)$ for $\ell$ large enough\footnote{The proof for this item is analogous to \cite[Lemma 2.3A]{ElMiWrinI}. This is also the part of the proof where the bounds on the derivatives of $\lambda$ and $\tilde{\gamma}_{\delta,\ell}$ come into play.}, and
		\item $G_\ell = s_{t_i} \circ \gamma_{\delta,\ell}$ over $I^{n-1} \times \Delta_i$.
	\end{itemize}

\pfstep{Chopping the wrinkles}
A composition of a submersion and a wrinkled map is a wrinkled map provided that the image of each wrinkle is sufficiently small (so that it is contained in a submersion chart). To achieve this, we homotope the map $\gamma_{\delta,\ell}$ into a map with more wrinkles, but that are smaller, by a procedure known as ``chopping the wrinkles''. Since $\gamma_{\delta,\ell}$ is a map $\R^n \to \R^n$ and in particular does not depend on $\Etale\Psi$, the reasoning by Eliashberg and Mishachev \cite[Lemma 2.1C]{ElMiWrinI} is also valid in our \'etale case.\footnote{A more detailed account of chopping wrinkles can also be found in \cite[Section 13.4]{CiElMi}} Hence we define $G$ as the result of chopping the wrinkles of $G_\ell$ for $\ell$ large enough.

\pfstep{As a map $I^n \to I^n$} We observe that $p_b \circ G : I^n \to I^n$ equals the wrinkled submersion as constructed in \cite{ElMiWrinI}. Hence the second and third property from the proposition follow from immediately.

\pfstep{Checking $C^0$-closeness} Here we now deviate again from Eliashberg and Mishachev. The only property left to verify is that $p_r \circ G$ and $F$ are $C^0$-close. Since the chopping of the wrinkles results in a $C^0$-perturbation, it suffices to argue that $p_r \circ G_\ell$ approximates $F$.

Define the map $\tilde F: I^n \to I^n$ by $\tilde F(x) = \tilde F_{x_n}(x)$ where $x = (x_1,\dots,x_n)$. We may assume that $d_{C^0}(\tilde{F} , F)$ is arbitrarily small by taking $\eta$ from \cref{lab:wrhaclose} arbitrarily small. It then suffices to show that
\[d_{C^0}(p_r \circ G_\ell , \tilde{F}) < \epsilon.\]

We recall that it holds that $\lambda(x_n) \in [x_n - \nu(\ell) , x_n + \nu(\ell)]$ and that $\tilde{\gamma}_{\delta,\ell} (x) \in [x_n - \delta, x_n + \delta]$. By compactness we can assume that $\delta$ is small enough such that 
\[  d_{C^0}( \tilde{F}_y (\tilde{x}, x_n) , \tilde{F}_y (\tilde{x}, x'_n) ) < \epsilon/2 \]
for all $y \in I$, all $\tilde{x} \in I^{n-1}$ and $x_n, x'_n \in [y-\delta,y+\delta]$.  

Since the family $\{\tilde{F}_y\}$ is smooth in the parameter we additionally have that for $\ell$ large enough
    \[ 	d_{C^0}( \tilde{F}_a (x), \tilde{F}_b (x) ) < \epsilon/2. \]
for all $x \in I^n$ and $a,b\in I$ such that $|a - b | \leq 2\nu(\ell)$.

Together, this implies that 
	\begin{align*}
		d( p_r \circ G_\ell (x), \tilde{F}_{x_n}(x) ) 
		&= d( \tilde{F}_{\lambda(x_n)} (\tilde x, \tilde{\gamma}_{\delta,\ell} (x)), \tilde{F}_{x_n} (x) ) \\
		&\leq d( \tilde{F}_{\lambda(x_n)} (\tilde x, \tilde{\gamma}_{\delta,\ell} (x)), \tilde{F}_{\lambda(x_n)} (x) ) + d( \tilde{F}_{\lambda(x_n)} (x), \tilde{F}_{x_n} (x) ) \\
        &< \epsilon,
	\end{align*}
where we write $x=(\tilde x, x_n)$, which concludes the proof.
\end{proof}

Next we prove the fibered version of \cref{lem:wrink}, by indicating the changes that need to be made to the proof of \cref{lem:wrink}. 
\begin{proposition}\label{lem:wrinkFib}
Let $F : I^n \times I^\ell \rightarrow J^r(\Psi \times I^\ell)$ be a section fibered over $I^\ell$. Suppose that $F$ is holonomic over $\Op(\partial I^n) \times I^\ell$ and $I^n \times \Op(\partial I^\ell)$. Then, there exists a wrinkled submersion $G: I^n \rightarrow \Etale{(\Psi)}$ that is fibered over $I^\ell$ such that:
\begin{itemize}
    \item the maps $p_r \circ G$ and $F$ are $C^0$-close, and agree on $\partial I^n \times I^\ell$ and $I^n \times \partial I^\ell$,
    \item the map $p_b \circ G: I^n \times I^\ell \rightarrow I^n \times I^\ell$ is a wrinkled submersion fibered over $I^\ell$ and $C^0$-close to the identity, and
    \item the regularized differential of $p_b \circ G$ is homotopic to the constant family $j^1(\id)$ via families of fibered formal submersions, relative to $\partial I^n \times I^\ell$ and $I^n \times \partial I^\ell$.
    \end{itemize}
\end{proposition}
\begin{proof}
We apply the proof of \cref{lem:wrink} to $F$ and obtain the desired wrinkled submersion $G:I^n \times I^\ell \to \Etale{(\Psi \times I^\ell)}$. There are two parts of the proof that warrant further comment:

The first part is when we use holonomic approximation to pass from $F'$ to an $I^{\ell +1}$-parameter family of holonomic sections on $\Op(I^{n-1} \times \{(y,k)\}$. Here we point out that by wiggling in the $I^n$-direction, we can arrange the diffeotopy $h_t$ on $\Op(I^n \times I^\ell)$ to be fibered over $I^\ell$.

The second part is where we glue the holonomic approximations to a single map using wrinkles. The wrinkles as constructed in the proof of \cref{lem:wrink} can be made to be fibered over $I^\ell$. Hence the proof will result in a map $G: I^n \times I^\ell \rightarrow \Etale{(\Psi \times I^\ell)}$ fibered over $I^\ell$.  
\end{proof}

\subsection{Wrinkling in a manifold}
\label{sec:ProofWrinklingEtale}
We now apply a reasoning quite common for h-principles to deduce the global analogue of \cref{lem:wrink}. That is, we triangulate the manifold $M$, apply holonomic approximation on its codimension one skeleton and use \cref{lem:wrink} to deal with the top dimensional cells.

\wrinklingEtale
\begin{proof}
    Consider a section $F:M\to J^r\Psi$ over an $m$-dimensional manifold $M$. We choose a triangulation $\ST$ of $M$, and denote its $i$-skeleton by $\ST^{(i)}$. Applying holonomic approximation, we obtain a $\delta$-small (in $C^0$-sense) diffeotopy $h_t: M \rightarrow M$ with $t \in [0,1]$ and a formal solution $\tilde{F} : M \rightarrow \SR$ such that $\tilde{F}$ is holonomic over $\Op(h_1(\ST^{(m-1)}))$ and such that $F$ and $\tilde F$ are arbitrarily close in the $C^0$-sense.
	
    It remains to make $\tilde{F}$ holonomic on the top dimensional cells of $h_1(\ST)$. To do so, we work one $m$-simplex $\Delta$ at a time. Our section $\tilde{F}$ is holonomic in a neighborhood of the boundary of $\Delta$. We then parametrize $\Delta \setminus \Op(\partial \Delta)$ by a cube $I^n$. As such, we are in the situation of \cref{lem:wrink}, which produces a wrinkled submersion over the cube, relative to a neighborhood of the boundary. Repeating this for every simplex gives us the wrinkled submersion $G: M \rightarrow \Etale{\Psi}$.
\end{proof}

\begin{remark}
    We expect that the main result in \cite{PT1}, by the second and third author, can also be used to prove \cref{thm:wrinklingEtale}. It is \cite[Theorem 1.2]{PT1} that states that any section of $r$-jet space can be approximated by a holonomic multi-valued section that is additionally topologically embedded (and moreover, its projection to $\Psi$ is an $r$-order wrinkled embedding). This is a stronger result than \cref{thm:wrinklingEtale}, but also much more technical due to the embedding requirement. Moreover, it does not directly imply our our main result since some work is necessary to lift the multi-valued section to \'etale space. Hence we provide here a direct proof, which is as such also easier than using \cite{PT1}. 
\end{remark}

We now turn to the fibered version of \cref{thm:wrinklingEtale}, whose proof adapts the arguments from the proof of \cref{thm:wrinklingEtale} to the fibered setting. 

\begin{proposition} \label{pro:wrinklingEtaleMfdFib}
    Let $K$ be a compact manifold and let $F : M \times K \rightarrow J^r(\Psi \times K)$ be a section fibered over $K$. Suppose that $F$ is holonomic over on $\Op(L)$ where $L\subset M \times K$ is a closed subset. Then, there exists a wrinkled submersion $G: M \times K \rightarrow \Etale{(\Psi \times K)}$ that is fibered over $K$ such that:
\begin{itemize}
    \item the maps $p_r \circ G$ and $F$ are $C^0$-close, and agree on $L$,
    \item the map $p_b \circ G: M \times K \rightarrow M \times K$ is a wrinkled submersion fibered over $K$ and $C^0$-close to the identity, and
    \item the regularized differential of $p_b \circ G$ is homotopic to the constant family $j^1(\id)$ via families of fibered formal submersions, relative to $L$.
    \end{itemize}
\end{proposition}
\begin{proof}
    The manifold $M \times K$ is foliated by the fibers of $\pi_K$, and by Thurston's jiggling~\cite{Th2,FPTjiggling} there exists a triangulation $\ST$ of $M \times K$ in general position with respect to this foliation. This means in particular that each top dimensional simplex $\sigma \in \ST^{(m+k)}$ is transverse to the leaves of the foliation.

    Over the $(m+k-1)$-skeleton we apply holonomic approximation, to get a ($C^0$-small) diffeotopy $h_t: M \times K \to M \times K$, $t \in [0,1]$, and a map $\tilde F:M \times K \to J^r(\Psi \times K)$ that is both holonomic and $C^0$-close to $F'$ in a neighborhood of $h_1(\ST^{m+k-1})$. Recall that the proof of holonomic approximation allows us to choose the direction in which $h_t$ wiggles $\ST^{m+k-1}$. As such we arrange that $h_1(\ST)$ is still transverse to $\ker (\d \pi_K)$.

    It remains to extend $\tilde F$ to the top dimensional simplices of $h_1(\ST)$. As in the proof of \cref{thm:wrinklingEtale} we do this one simplex at a time, using \cref{lem:wrinkFib}. This results in the wrinkled submersion $G:M \times K \to \Etale{(\Psi \times K)}$. 
    
    If $L$ is non-empty we apply all our arguments relative to $L$. In particular we need to choose a triangulation sufficiently small such that each top dimensional cell is either contained in $\Op(L)$ or is disjoint from $L$.
\end{proof}

The fibered statement of \cref{pro:wrinklingEtaleMfdFib} implies in particular the following statement about families from the introduction.

\label{sec:ProofWrinklingEtaleParametric}
\wrinklingEtaleParametric
\begin{proof}
    Consider the section $F:M \times K \to J^r(\Psi)$ defined by $F(x,k) = F_k(x)$. We choose an arbitrary lift 
    \begin{equation*}\label{eq:ProductSection}
    F':M \times K \to J^r(\Psi \times K)
    \end{equation*}
    We apply the proof of \cref{pro:wrinklingEtaleMfdFib} to $F'$ and obtain a wrinkled submersion $G':M \times K \to \Etale{(\Psi \times K)}$ that is fibered over $K$. We define the $K$-family $G_k: M \to \Etale(\Psi)$ by $G_k(m) = 
\pi G(m,k)$, where $\pi$ is the projection $\Etale(\Psi \times K) \to \Etale(\Psi)$. 

    Since the map $G'$ is fibered, it follows immediately that each $G_k$ is continuous for the \'etale topology. However, since the variables in $M$ and $K$ are independent, we cannot deduce that the family $G$ as a whole is also continuous for the \'etale topology. It will however be continuous for the Whitney topology, as $G'$ is continuous and the Whitney topology is coarser than the \'etale topology (recall also \cref{ex:WhitneyEtaleTop}).

    We leave it to the reader to check that the $G_k$ satisfy the other properties in \cref{thm:wrinklingEtaleParametric}.
\end{proof}

\section{Applications of wrinkling} \label{sec:applWrinkling}

We now cover some of the consequences of \cref{thm:wrinklingEtale}. In \cref{sec:proofConnectivityEtale} we address the proof of \cref{prop:connectivityEtale}, which deals with the connectivity of \'etale space. In \cref{ssec:folded} we recover the h-principle for folded symplectic forms of Cannas da Silva. In \cref{ssec:horizontal} we recover a result about horizontal homotopy groups in jet spaces.

\subsection{Connectivity of \'etale space} 
\label{sec:proofConnectivityEtale}
\Cref{thm:wrinklingEtale} tells us that we can lift sections of $\SR$ to maps into $\EtSol{M}$. By considering instead just maps into $\SR$ and seeing whether they can be lifted to $\EtSol{M}$, we relate the connectivity of $\EtSol{M}$ to $\SR$. 

Given a map $F:S^i \to\SR$, we can triangulate $S^i$ and make the images of the simplices graphical over the base, so that they can be interpreted as images of formal sections of $\SR$. However, in this situation we cannot apply holonomic approximation to the codimension one skeleton as we did for \cref{thm:wrinklingEtale}, since the simplices may overlap in the base $M$. Hence, we need to individually make each of the formal sections holonomic. 

An illustration of the upcoming proof is given in \cref{fig:etale}.
\begin{figure}[h]
    \centering
    \begin{subfigure}[t]{0.3\linewidth}
        \centering
        \includegraphics[width=\linewidth,page=2,clip=true,trim = 2cm 0 2cm 0 ]{Fig_Haefliger}
    \end{subfigure}
    \hfill
    \begin{subfigure}[t]{0.3\linewidth}
        \centering
        \includegraphics[width=\linewidth,page=3,clip=true,trim = 2cm 0 2cm 0 ]{Fig_Haefliger}
    \end{subfigure}
    \hfill
    \begin{subfigure}[t]{0.3\linewidth}
        \centering
        \includegraphics[width=\linewidth,page=4,clip=true,trim = 2cm 0 2cm 0 ]{Fig_Haefliger}
    \end{subfigure}
	\centering
	\caption{On the left, the image of a map $F: S^i \to \SR$. The fibers of the bundle $\SR \to M$ run vertically. In the middle, a piecewise embedding of $S^i$ into $\SR$ that is transverse to the fibers and a $C^0$-approximation of $F$. On the right, a ``holonomic approximation'' of $F$ by a map projected down from \'etale space. Along each top simplex we see wrinkling.} \label{fig:etale}
\end{figure}

\connectivityEtale
\begin{proof}
    We first prove surjectivity in rank $i \leq \dim(M)$. 

    Consider a map $F: S^i \rightarrow \SR$ representing a given homotopy class. We use Thurston's jiggling to find a piecewise embedding $G: S^i \to \SR$ with respect to a triangulation of $S^i$ such that $G$ is $C^0$-close to $F$ and transverse to the fibers of the base projection $p_b: \SR \rightarrow M$. The existence of $G$ is stated as \cref{lem:technicalJiggling} below. This makes all simplices graphical over $M$, so they can each be regarded as a formal solution over a simplex in $M$. That is, given a simplex $\Delta \subset S^i$ we consider its image $G(\Delta)$. It is graphical over the simplex $\sigma \coloneqq p_b (G(\Delta)) \subset M$ and is thus the image of a formal section $H: \sigma \rightarrow \SR$. Inductively on the dimension of the simplices, and simplex per simplex, we then argue as follows:

    We start with the $0$-simplices. Let $\Delta$ be a $0$-simplex in $S^i$ and let $H: \sigma \to \SR$ be the corresponding formal solution where $\sigma = p_b(G(\Delta))$ is a $0$-dimensional simplex in $M$. We choose a holonomic extension $\tilde H_\sigma$ of $H$ over a neighborhood $V$ of the point $\sigma$. Next we ``flatten out'' the map $G$ over the simplices adjacent to $\Delta$. That is, we homotope $G$ over these simplices such that the image of a neighborhood $A$ of $\Delta$ under the resulting map is contained in $\tilde H_\sigma(V)$. We can assume that $A$ contains $\sigma$ as the only $0$-simplex. We repeat this for all $0$-simplices and obtain a piecewise smooth map $G^{(0)}: S^i\to \SR$ homotopic to $G$ and hence $F$, such that over a neighborhood of the $0$-skeleton of $S^i$, it is smooth and lifts to $\EtSol{M}$.

    We now let $k\leq m-1$ and assume that we have constructed a map $G^{(k-1)}: S^i \to \SR$ which, over a neighborhood of the $(k-1)$-skeleton in $S^i$, is smooth and lifts to $\EtSol{M}$.
    
    Let $\Delta$ be a $k$-simplex in $S^i$ and let $\sigma \coloneq p_b(G^{(k-1)}(\Delta))$ be the corresponding $k$-simplex in $M$. We denote by $H$ the section $\sigma \to \SR$. We extend $H$ to a section $H' : U \to \SR$ defined on a neighborhood $U$ of $\sigma$. Since $G^{(k-1)}$ lifts to $\EtSol{M}$ over a neighborhood $U_{\partial \Delta}$ of $\partial \Delta$, we can make sure that the image of $H'$ over $U_\partial \coloneqq p_b(G^{(k-1)}(U_{\partial \Delta}))$ is contained in the image of $G^{(k-1)}(U_{\partial \Delta})$. In particular this ensures that $H'$ is holonomic over $U_\partial$.
    
    We now apply holonomic approximation to $H'$ over $\sigma$ relative to $\partial \sigma$. This produces a $C^0$-small diffeomorphism $\psi: U \to U$ which fixes a neighborhood $V_\partial \subset U_\partial$ of $\partial\sigma$ and is $C^0$-close to the identity. Additionally, it produces a neighborhood $V\subset U$ of $\psi(\sigma)$, and a holonomic section $\tilde H : V \to \SR$ that is $C^0$-close to $H$ and agrees with $H$ over $V_\partial$. 

    Hence differently put, holonomic approximation produces an isotopy $B: [0,1]\times \Delta \to \SR$ starting at $G^{(k-1)}|_{\Delta}$ and ending at the embedding with image $\tilde H(\psi(\sigma))$. Using the isotopy extension theorem, we obtain an isotopy defined in a neighborhood $W$ of $\Delta$, which is the identity outside of a compact subset containing $W$ and ends at an embedding with image $\tilde H(V)$. Since $\phi$ and and $\tilde H$ are $C^0$-close to respectively the identity and $H'$, we can assume that $ W$ does not intersect other simplices of dimension $\leq k$, other than in the neighborhood $U_{\partial U}$ of $\partial \Delta$, where $G^{(k-1)}$ already lifts to $\EtSol{M}$ and where the isotopy $B$ is hence constant over $[0,1]$. Moreover, since the isotopy on $\Delta$ is fibered over $M$, we can assume the isotopy of $ W$ to also be fibered over $M$. In particular the isotopy preserves the transversality of the simplices to the fibers of $p_b : \SR \to M$.
    
    We now again ``flatten out'' the map $G^{(k-1)}$ over the simplices in $S^i$ adjacent to $\Delta$. Hence we homotope $G^{(k-1)}$ over these simplices such that the image of a neighborhood $A$ of $\Delta$ under the resulting map is contained in $\tilde H(V)$. 
    As a result, we obtain a map $G^{(k-1)}_\Delta : S^i \to \SR$ which is smooth and lifts to $\EtSol{M}$ over a neighborhood of the $(k-1)$-skeleton and over a neighborhood of $\Delta$. 

    We repeat this for all $k$-simplices in $S^i$, and obtain the map $G^{(k)}: S^k \to M$ which lifts to $\EtSol{M}$ over a neighborhood of the $k$-skeleton. If $i<m$, this finishes the proof of the surjectivity in rank $i$.
    
    If instead $i=m$, we proceed as follows. Let $\Delta$ be an $m$-simplex in $S^m$ and let $H: \sigma \to \SR$ be the corresponding formal solution defined over an $m$-simplex $\sigma$ in $M$. We extend $H$ to $H' : U \to \SR$ as for the lower dimensional simplices. Since $\sigma$ is of the same dimension as $m$, we cannot apply holonomic approximation. Instead, we apply \cref{thm:wrinklingEtale} relative to $\partial \sigma$ which introduces wrinkles in the top-dimensional cells. As in lower dimensions, we apply the isotopy extension theorem to obtain a map $G^{(m-1)}_\Delta: S^i \to \SR$ that is smooth and lifts to $\EtSol{M}$ over a neighborhood of the $(m-1)$-skeleton and over a neighborhood of $\Delta$.
    
    We repeat this for every $m$-simplex $\Delta$. As a result we obtain a map $G^{(m)}: S^m \to \SR$ that is homotopic to $F$ and lifts to $\EtSol{M}$. This finishes the proof of the surjectivity in rank $i \leq m$.

    For injectivity we argue using a map $F: D^{i+1} \rightarrow \SR$ together with a lift to $\EtSol{M}$ along the boundary. By a small perturbation we can assume that this lift extends to a collar. We then argue as above relative to this collar.
\end{proof}

It remains to prove the following auxiliary technical lemma, which we used in the proof above. Here we denote by $|K|\subset \R^N$ the linear polyhedron associated to a simplicial complex $K$ consisting of linear simplices in $\R^N$. That is, $|K|$ is the union of the simplices of $K$. 

\begin{lemma} \label{lem:technicalJiggling}
Let $M$ and $N$ be manifolds of dimension $m$ and $n$ respectively, where $m \leq N$. Let $T: |K| \to M$ be a triangulation of $M$ and let $N$ be endowed with a distribution $\xi$. Consider a map $F: M \rightarrow N$ that is piecewise smooth with respect to $T$. Then, there exists a subdivision $T':|K'| \to M$ of $T$ and a map $F': M \rightarrow N$ such that:
\begin{itemize}
    \item $F'$ is piecewise smooth with respect to $T'$,
    \item $F'$ is $C^1$-close to $F$,
    \item $F'$ is a piecewise embedding, and
    \item $F'$ is piecewise transverse to $\xi$.
\end{itemize}
\end{lemma}
\begin{proof} 
    The requested pair $(F',T')$ can be obtained by applying an improved version \cite{FPTjiggling} of Thurston's jiggling \cite{Th2} to the pair $(F,T)$. Jiggling subdivides $T$ and perturbs $F$, such that $F'$ is piecewise transverse to $\xi$ and $F'$ is a piecewise immersion, both with respect to $T'$. Since $T$ is subdivided to $T'$ in a way that the diameter of the simplices in the triangulation decreases uniformly, we can assume that the simplices of $T$ are small enough such that $F'$ is in fact a piecewise embedding. 
\end{proof}

\subsection{Folded symplectic structures} \label{ssec:folded}
Our results achieve a theorem similar to the one of Cannas da Silva~\cite{CdS} on folded symplectic structures. Here a \textbf{folded symplectic} structure on an $m$-manifold $M$ is a closed $2$-form $\omega$ that is non-degenerate, except for on a hypersurface $V\subset M$ around which it locally agrees with the pullback by a fold of the standard symplectic structure $\omega_{\mathrm{std}}$ on $\R^m$. Analogously, we define the notion of a double folded symplectic structure. Then the formal regularization below is the result of pulling back the standard symplectic structure $\omega_{\mathrm{std}}$ via the formal regularization of the double fold (recall \cref{sec:DoubleFold}). 
\begin{proposition} \label{pro:foldedSympl}
    Let $M$ be an even dimensional manifold with a formal symplectic structure $\omega$. Then there exists a (double) folded symplectic structure $\omega'$ such that its formal regularization is homotopic to $\omega$ via formal symplectic structures. If $\omega$ is symplectic in an open neighborhood of a closed subset $M'\subset M$, it can be formally regularized relative to $M'$.
\end{proposition}
\begin{proof}
    Since the symplectic relation $\SRsympl$ is $\Diff$-invariant, we recall from \cref{sec:tautsol} that the \'etale space $\EtSol[\SRsympl]{M}$ of symplectic structures over $M$ has a canonical symplectic structure $\omega_{\rm{can}}$. Moreover, \cref{cor:wrinklingEtale} produces a wrinkled submersion $G: M \rightarrow \EtSol[\SRsympl]{M}$ such that $p_1 \circ G$ is homotopic to $\omega$ via formal symplectic structures. It follows from \cref{cor:wrinklingEtaleParametric} that this homotopy is relative to $M'$. Using surgery of singularities (see \cite[Section 2.10]{ElMiWrinEmb} or \cite[Appendix B]{PT1}), we replace each wrinkle in $G$ by a collection of double folds, which replaces $G$ by a map $H$ that is instead a folded submersion. We can make sure that the formal regularizations of $p_1 \circ G$ and $p_1 \circ H$ are homotopic via formal symplectic structures relative to $M'$. The pullback $H^*\omega_{\rm{can}}$ is then the requested folded symplectic structure on $M$.
\end{proof}
We note that since \cref{cor:wrinklingEtale} also holds parametrically (\cref{cor:wrinklingEtaleParametric}), a parametric version of \cref{pro:foldedSympl} follows as well. For this we need to allow double folds to cancel in an embryo event as the parameter varies.

\begin{remark}
The key point behind this result is that differential forms can be pulled back by maps and, in the symplectic case, one can provide a (local) normal form for said pullback as long as the maps under study have controlled singularities (like folds).

For general geometries, is this much more subtle. Indeed, in general there will be many models for the singularities of the pullback, depending on the relative positions of the geometric structure and the map. It is then meaningful to study whether one may be able to homotope the map and the structure further to reduce the possible models to a controlled collection. 

This idea enters crucially the h-principle for higher-dimensional contact structures. In \cite{BEM} they use wrinkling to prove that any formal contact structure is homotopic to a contact structure with singularities isomorphic to a ``universal hole''. These are then removed using an overtwisted disc. The h-principles for overtwisted Engel structures \cite{CPPP,CPP19,PV} use similarly the fact that one can produce Engel structures whose singularities are controlled (but it is not known whether a unique ``universal hole'' exists in the Engel setting).

In this paper we consider arbitrary $\SR$ and do not pursue the idea of adapting our wrinkles further to the geometric structures under consideration (since this is highly dependent on the nature of $\SR$).
\end{remark}

\subsection{Horizontal homotopy groups} \label{ssec:horizontal}

A topic that has received attention in the last few years \cite{HajSchTy,WenYoung,Perry19,Perry21} is the study of the \emph{horizontal homotopy groups} $\pi_i^H$. These consist of homotopy classes of tangent maps into a manifold $N$ endowed with a distribution $\xi$, up to tangent homotopy. As we remarked in the introduction, maps into $\EtSol{M}$ project down via $p_r$ to maps tangent to the Cartan distribution $\xi_\can$ in $\SR \subset J^r\Psi$. Hence \cref{prop:connectivityEtale} recovers the following statement:
\begin{corollary} \label{cor:connectivityHorizontal}
The inclusion in homotopy groups $\pi_i^H(\SR,\xi_\can) \rightarrow \pi_i(\SR)$ is surjective in degrees $i \leq \dim M$. 
\end{corollary}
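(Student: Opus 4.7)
The plan is to deduce the corollary directly from Proposition~\ref{prop:connectivityEtale} together with the defining tautological property of the \'etale space: any smooth map $G: K \to \EtSol{M}$ (in the \'etale sense) projects via $p_r$ to a map $p_r \circ G: K \to \SR$ that is everywhere tangent to the Cartan distribution $\xi_\can$. Indeed, locally a map into $\Etale \Psi$ is given by picking, in each chart, a section of $\Psi$ and taking its germs; its $r$-jet extension is then by construction holonomic, i.e.\ tangent to $\xi_\can$ in $J^r\Psi$, and if the original map landed in $\EtSol{M}$ then $p_r \circ G$ lands in $\SR$.

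With this in hand, given a class in $\pi_i(\SR)$ with $i \leq \dim M$ represented by some $F: \NS^i \to \SR$, I would invoke Proposition~\ref{prop:connectivityEtale} (surjectivity on $\pi_i$ for $i \leq \dim M$) to produce a map $G: \NS^i \to \EtSol{M}$ such that $p_r \circ G$ is homotopic to $F$ as maps into $\SR$. By the observation above, $p_r \circ G$ is horizontal with respect to $\xi_\can$, so it defines a class in $\pi_i^H(\SR,\xi_\can)$. The homotopy in $\SR$ between $F$ and $p_r \circ G$ exhibits $[F]$ as the image of $[p_r \circ G]$ under the forgetful map $\pi_i^H(\SR,\xi_\can) \to \pi_i(\SR)$, which gives surjectivity.

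There is essentially no obstacle here beyond the content already packaged in Proposition~\ref{prop:connectivityEtale}; the only thing to be careful about is distinguishing what $\pi_i^H$ demands: horizontal representatives modulo horizontal homotopy. We only need surjectivity onto $\pi_i$, so we must produce a horizontal representative for each class, but we are \emph{not} required to identify two horizontal maps via a horizontal homotopy. The projection-from-\'etale-space argument yields the horizontality of the representative automatically, while the homotopy to $F$ is allowed to be an arbitrary homotopy in $\SR$. No additional regularization is needed.
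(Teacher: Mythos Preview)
Your proposal is correct and follows exactly the paper's own reasoning: the paper simply observes that maps into $\EtSol{M}$ project via $p_r$ to maps tangent to $\xi_\can$, and then says that Proposition~\ref{prop:connectivityEtale} immediately yields the corollary. Your write-up spells out the same argument with slightly more care about what $\pi_i^H$ requires.
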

Observe that injectivity does not follow from \cref{prop:connectivityEtale}, since not every horizontal map can be lifted to a map to \'etale space. 

Related to the study of horizontal homotopy groups are Lipschitz homotopy groups $\pi_i^L$, as they both arise in the study of sub-Riemannian manifolds \cite{DHLTLipschitz}. These Lipschitz homotopy groups are defined similarly to ordinary homotopy groups, but the maps (and homotopies) are required to be Lipschitz continuous. Lipschitz and ordinary homotopy groups coincide for Riemannian manifolds, but this is not the case for more general metric spaces, such as sub-Riemannian manifolds equipped with a Carnot-Carath\'eodory metric.

Reverting our attention back to \cref{cor:connectivityHorizontal}, we note that the maps into $\SR$ that are projected down from $\EtSol{M}$ are Lipschitz for any Carnot-Carath\'eodory metric on $\xi_\can$. Hence we also obtain a surjection if we take the domain to be the Lipschitz homotopy group $\pi_i^L(\SR,\xi_\can)$.

As far as the authors are aware, a proof of \Cref{cor:connectivityHorizontal} was first sketched by Thom \cite{Thom1}, though it was phrased differently. The proof we have just presented suggests an interesting open question.
\begin{question}
Observe that $p_r^{r'}: J^{r'}\Psi \rightarrow J^r\Psi$ with $r'>r$ takes horizontal maps to horizontal maps and therefore defines the following morphism of horizontal homotopy groups 
\[ \pi_i^H(J^{r'}\Psi,\xi_\can) \rightarrow \pi_i^H(J^r\Psi,\xi_\can). \]
What can be said about its connectivity (particularly in degree $\dim M$)? An analogous question can be posed for Lipschitz homotopy groups.
\end{question}
\section{Preliminaries: groupoids, principal bundles and microbundles} \label{sec:prelimGroupoids}
Our next goal is to establish the results related to microbundles as stated in \cref{sec:IntroRMb}. However, before we actually start with this, we will discuss the necessary language. We start by discussing groupoids and their principal bundles in \cref{ssec:PGBundles}. In \cref{sec:prelim2Haefliger} we discuss how principal groupoid bundles relate to microbundles when the groupoid is effective, and we summarize the correspondences in \cref{ssec:summaryBundles}. We introduce the classifying space of principal groupoid bundles in \cref{sec:prelim2classspace}. A reader already familiar with all of this may skip this section. It is in \cref{sec:Rgroupoids} that we introduce geometries, and in particular $\GammaR$, to the story.

For more details we refer the reader to Haefliger's original articles \cite{Haef1,Haef2,Haef3}, Thurston's work on the flexibility of foliations and its relation to diffeomorphism groups \cite{Th3,Th2,Th1}, as well as more modern references in Lie groupoid theory \cite{MoerMrcun,CraFer,Carchedi,AccCra} and h-principles for foliations \cite{ElMiWrinIII,Mei}.

\subsection{Groupoids} \label{ssec:PGBundles}
A succinct definition of a \textbf{groupoid} is as a category in which every arrow is invertible. We however prefer to think of a groupoid as consisting of a space of arrows $\SG$ and a base space $X$ with the following structure maps: source and target, multiplication, inversion and unit. All together we denote such a groupoid as $\SG \rightrightarrows X$. We write $\source$ and $\target$ for respectively the source and target maps of the upcoming groupoids and write morphisms as $g: \source(g) \to \target(g)$.

We consider either topological or Lie groupoids. For \textbf{topological groupoids} we require the sets $\SG$ and $X$ to be topological spaces and all the maps from the definition to be continuous. For \textbf{Lie groupoids}, we require the sets $\SG$ and $X$ to be smooth manifolds, although we allow them to be non-Hausdorff and not second-countable.  Additionally we require that all structure maps are smooth, and the source and target to be smooth submersions.

\begin{remark}
    Even though we do not require the base space $X$ of a Lie groupoid $\SG\rrarrows X$ to be Hausdorff and second-countable, this will be our default. Similarly, we reserve the term manifold for topological spaces that are locally Euclidean, Hausdorff and second-countable.  If we deviate from this, we mention so explicitly. As a guideline, the only manifolds we encounter that are not Hausdorff or second-countable are the \'etale spaces $\EtSol{M}$ where $M$ is a (Hausdorff and second-countable) manifold. As a consequence, groupoids over $\EtSol{M}$ will be the only groupoids with a base manifold that is not Hausdorff or second-countable. 
\end{remark}

We say that a Lie groupoid is \textbf{\'etale} if the source and target maps are local diffeomorphisms.  We refer to a Lie groupoid $\SG$ as Hausdorff and/or second-countable if both its arrow and base space are.
\begin{remark} \label{rem:groupoidTopDef}
    In the remainder of this section we focus on Lie groupoids, and their related smooth notions, since they are our main object of study. However, all of the following definitions can be generalized to the topological category by replacing the smooth notions (eg. smooth manifold, diffeomorphism, smooth map) with their respective topological counterparts (eg. topological space, homeomorphism, continuous map).
\end{remark}

A \textbf{groupoid morphism} between Lie groupoids $\SG \rrarrows X$ and $\SH \rrarrows Y$ is then a functor, when we interpret both groupoids as categories. More explicitly, a groupoid morphism is given by two smooth maps $F: \SG \to \SH$ and $f: X \to Y$ such that the diagram
\begin{equation*}
        \begin{tikzcd}
            \SG \arrow[d,shift left] \arrow[d,shift right]  \arrow[r,"F"] &  \SH  \arrow[d,shift left] \arrow[d,shift right]  \\
            X \arrow[r,"f"] & Y.
        \end{tikzcd}
    \end{equation*}
commutes. A groupoid morphism $(F,f)$ as above is a \textbf{groupoid isomorphism} if there exists a morphism $(G,g)$ from $\SH$ to $\SG$ such that $(F\circ G, f \circ g)$ and $(G \circ F, g \circ f)$ are the identity maps. 

A (local/global) section $\sigma$ of $\source: \SG \rrarrows X$ is called a \textbf{(local/global) bisection} if $\target \circ \sigma$ is a diffeomorphism.

\subsubsection{The groupoid \texorpdfstring{$\Gamma$}{Γ} of local diffeomorphisms} \label{sec:Gamma}
A classical example of a groupoid arises from the following, which is in general just a space.
\begin{definition}\label{def:GammaMN}
    Let $M$ and $N$ be smooth manifolds. We denote the space of germs of locally defined diffeomorphisms $M \to N$ by $\Gamma(M,N)$ and endow it with the \'etale topology. 
\end{definition}
In the case where $M=N$ we obtain the groupoid classically denoted by $\Gamma$. It has been an object for study, for reasons we will encounter later (as for instance in \cref{sec:folmb,rem:ClassFramedPB}).

\begin{definition} \label{def:Gamma}
    Let $M$ be a smooth manifold. The Lie groupoid of germs of locally defined diffeomorphisms $\Gamma(M,M)$ is denoted by $\GammaM{M} \rightrightarrows M$. In particular we write $\Gamma^n \rightrightarrows \R^n$ for $\Gamma^{\R^n} \rightrightarrows \R^n$.
\end{definition}
In the above definition the base $M$ is a smooth manifold in the usual sense (that is, $M$ is Hausdorff and second-countable), and the space of arrows is endowed with the unique smooth structure turning the source and target maps into \'etale maps. This makes the groupoid into an \'etale groupoid. In particular, the source and target fibers carry the discrete topology.

We discuss two variations of the above spaces: Firstly, we can endow $\Gamma(M,N)$ and $\Gamma^M$ with the Whitney topology instead of the \'etale topology (recall \cref{sec:EtSolWhitneyTop}). To indicate this, we respectively write $\Gamma^{n,\Whitney}(M,N)$ and $\Gamma^{M,\Whitney}$. We do not endow them with a compatible smooth structure, so that $\Gamma^{n,\Whitney}(M,N)$ and $\Gamma^{M,\Whitney}$ are just topological objects to us.\footnote{We imagine that if we want to endow $\Gamma^{n,\Whitney}(M,N)$ and $\Gamma^{M,\Whitney}$ with a smooth structure compatible with the Whitney topology, we would have to model these spaces on an infinite dimensional vector space. This would turn $\Gamma^{n,\Whitney}(M,N)$ and $\Gamma^{M,\Whitney}$ into respectively an infinite dimensional manifold and groupoid, and hence we consider them as just topological objects.}

Secondly, by taking $r$-jets of the germs in $\Gamma(M,N)$ and $\Gamma^M$, we obtain the space $J^r\Gamma(M,N)$ and the groupoid $J^r\Gamma^M$. In this case $J^r\Gamma(M,N)$ and $J^r\Gamma^M$ are canonically endowed with the Whitney topology. When $r<\infty$, this makes the groupoid $J^r\Gamma^M$ a finite dimensional, Hausdorff and second-countable Lie groupoid, which is not \'etale.

\subsubsection{Pseudogroups} \label{sec:pseudogroups}
 We recall that in \cref{def:pseudogroup} we defined the notion of a pseudogroup $G$ over a smooth manifold $M$. To any such $G$ we now associate the following groupoid: 
\[ \Gamma(G) = \{[f]_x \mid f\in G \text{ and } x\in M \text{ in the domain of definition of } f  \}. \]
Vice versa, we associate to every \'etale groupoid $\SG$ the pseudogroup 
\[ \gtop(\SG) = \{ \target \circ \sigma \mid \sigma \text{ is a local bisection of } G \}. \]
We note that $\gtop(\Gamma(P))=P$ for any pseudogroup $P$, whereas $\Gamma(\gtop(\SG))$ produces what is known as the \textbf{effect} $\Eff(\SG)$ of a groupoid $\SG$.
\begin{definition} \label{def:effective}
    Let $\SG \rightrightarrows X$ be an \'etale Lie groupoid. We define the Lie groupoid homomorphism $\Eff$ from $ \SG \rightrightarrows X$ to $\GammaM{X} \rightrightarrows X$ as the identity on $X$ and as $\Eff(g) = [\target (\source|_U)^{-1}]_{\source (g)}$ on $g \in \SG$, where $U \subset \SG$ is an open neighborhood of $g$ such that $\source|_U$ and $\target|_U$ are diffeomorphisms. We say that an \'etale Lie groupoid $\SG$ is \textbf{effective} if $\Eff : \SG \to \Gamma^X$ is injective.
\end{definition}
As an example, the groupoid $\Gamma^M$ is effective for any manifold $M$ and is in particular mapped to itself via $\Eff$. Moreover, the pseudogroup associated to $\Gamma^M$ is $\Diff_\loc(M)$, and the groupoid associated to $\Diff_\loc(M)$ is $\Gamma^M$.

By the discussion above we obtain the following result, usually attributed to Haefliger~\cite{Haef4}.
\begin{lemma}
    There is a one-to-one correspondence between pseudogroups on a manifold $M$ and isomorphism classes of effective groupoids over $M$.
\end{lemma}

\subsubsection{Groupoid actions}
Similar to groups acting on a space, one can also consider groupoids acting on a space. However, the difference is that groupoids act along a so-called moment map, encoding when the action is defined.
\begin{definition}
    Let $\SG \rightrightarrows X $ be a Lie groupoid. A \textbf{(left) action} of $\SG$ on $P$ along a map $\mu : P \to X$ is a map $\SG \timeslr{\source}{\mu} P \to P$ denoted by $(g,p) \mapsto g \cdot p = gp$ such that for all $g,h \in \SG$ and $p \in P$ we have
    \begin{itemize}
        \item $h \cdot (g \cdot p)=(hg) \cdot p$ whenever both sides are defined, and
        \item $1_x \cdot p  = p$ where $\mu(p) = x \in X$.
    \end{itemize}
    This makes $P$ into a \textbf{(left) $\SG$-space} with \textbf{moment map} $\mu$.
\end{definition}
\begin{example} \label{ex:actionGroupoid}
    The (left) action of a groupoid $\SG$ on a space $P$ gives rise to the \textbf{action groupoid} $\SG \timeslr{\source}{\mu} P \rrarrows P$ with source map $(g,p) \mapsto p$ and target map $(g, p)\mapsto g\cdot p$. The composition of arrows is given by $(h,g \cdot x)\cdot(g,x) = (hg,x)$.
\end{example}

\subsubsection{Principal groupoid bundles}
When a Lie groupoid action on a space $P$ is free and proper (encoded together in the following definition as \cref{it:freeProper}), the quotient $P/\SG$ has a canonical smooth structure such that the canonical quotient map $P \to P/\SG$ is a smooth submersion. This observation gives rise to the notion of principal groupoid bundles. 
\begin{definition} \label{def:PGB}
    Let $\SG \rightrightarrows X$ be a Lie groupoid and let $M$ be a manifold. A \textbf{(left) principal $\SG$-bundle} over $M$ is a (left) $\SG$-space $P$ with a moment map $\mu: P \to X$ and a surjective submersion $\pi: P \to M$ such that
    \begin{enumerate}[label=(\arabic*)]
        \item $\pi$ is $\SG$-invariant, and
        \item\label{it:freeProper} the map $\SG \timeslr{\source}{\mu} P \to P \times_M P$ defined by $(g,p) \mapsto (p,g \cdot p)$ is a diffeomorphism.
    \end{enumerate}
    We summarize the information of a principal $\SG$-bundle in a diagram as follows:
    \begin{equation*}
        \begin{tikzcd}
            \SG \arrow[d,shift left] \arrow[d,shift right] & P \arrow[loop left, distance=3em, in =195, out = 165] \arrow[d,"\pi"] \arrow[dl,"\mu"]     \\
            X & M.
        \end{tikzcd}
    \end{equation*}
\end{definition}
Two of the easiest examples of principal groupoid bundles are the following. 
\begin{example}\label{ex:unitPB}
    Let $\SG \rrarrows X$ be a groupoid, then $X$ is canonically endowed with the \textbf{unit principal $\SG$-bundle}. This is the bundle $\pi = \source : \SG \to X$ with moment map $\mu = \target :\SG \to X$. The left action of $\SG$ on $\SG$ is just multiplication in $\SG$. The reason for the name is that, when we define this bundle by a cocycle (see \cref{pro:corCocyclePrBundle}), this cocycle consists of a single map $\gamma: X\to\SG$ given by $x \mapsto \id_x$.
\end{example}
\begin{example} \label{ex:trivPb}
    Let $\SG \rrarrows X$ be a groupoid and $f: M \to X$ a smooth map between manifolds. We define a principal groupoid as \textbf{trivial} if it is the pullback of the unit principal $\SG$-bundle from \cref{ex:unitPB}. In particular, we define the \textbf{trivial $\SG$-bundle} $P_f$ over $M$ associated to $f$ as
    \[ P_f = \{(m,g) \in M \times \SG \mid f(m) = \source(g) \} \]
    with the obvious projection $P_f \to M$, the moment map $\mu:P
    _f\to X$ defined by $(m,g) \mapsto \target(g)$ and the left action defined by $h\cdot (m,g) = (m,hg)$ whenever $\source (h) = \target(g)$. 
\end{example}

The natural notion of morphism for principal $\SG$-bundles is the following.
\begin{definition}
    A \textbf{bundle morphism} between principal $\SG$-bundles $P$ and $P'$ over a manifold $M$ with respective moment maps $\mu : P \to X$ and $\mu':P'\to X$, is a smooth map $F: P\to P'$ such that
    \begin{itemize}
        \item $\mu' \circ F = \mu$ as maps $P \to X$, and
        \item $F(g\cdot p) = g \cdot F(p) $ for all $p \in P$ and $g \in \SG$ whenever $g \cdot p$ is defined.
    \end{itemize}
    If there exists additionally a bundle morphism $G: P'\to P$ such that $F\circ G$ and $G \circ F$ are the identity maps, the map $F$ is a \textbf{bundle isomorphism}. We denote the \textbf{set of isomorphism classes of principal $\SG$-bundles} on $M$ by $\pb{M}{\SG}$.
\end{definition}

\subsubsection{Cocycles}

Each principal $\SG$-bundle is locally trivial~\cite[Remarks 5.34]{MoerMrcun}: given a principal $\SG$-bundle $\pi : P \to M$ we know that $\pi$ is a surjective submersion and hence admits local sections $\sigma: U \to P$. The composition $\mu \circ \sigma$ is then a map $U \to X$ and hence we obtain the trivial bundle $P_{\mu\circ\sigma}$. We observe that  $P_{\mu\circ\sigma}$ and $P|_U$ are isomorphic via the map $P_{\mu\circ\sigma} \to P|_U$ defined by $(g,x) \mapsto g \cdot \sigma(x) $.

If we now consider the transition maps between the $P_{\mu\circ\sigma}$ associated to different local sections $\sigma$ of $P$, we obtain a cocycle as in the definition below. The reader should keep in mind that the $\gamma_{ii}$ from the definition below correspond to the maps $\mu \circ \sigma$. For this, we note that the maps $\gamma_{ii}: U_i \to \SG$ in a cocycle map to identity elements in $\SG$, and hence we can interpret the $\gamma_{ii}$ as maps to the base $X$ of $\SG$.

\begin{definition}\label{def:PGBCocyle}
    Let $\SG \rightrightarrows X$ be a Lie groupoid and let $M$ be a manifold with an open covering $\{U_i\}_{i \in I}$. A \textbf{(smooth) $\SG$-cocycle} over $M$ is a collection $\{ \gamma_{ij} : U_i \cap U_j \to \SG \}_{i,j\in I}$ of smooth maps such that 
    \[
        \gamma_{ik}(x) = \gamma_{ij}(x)\gamma_{jk}(x)
    \]
    for all $x \in U_i \cap U_i \cap U_k$ and $i,j,k \in I$. 

    Two cocycles are \textbf{equivalent} if their union can be extended to a cocycle over a refinement of the associated opens. That is, two cocycles $\{\gamma_{ij}: U_i \cap U_j \to \SG \}_{i,j \in I}$ and $\{ \delta_{kl}: V_k \cap V_l \to \SG\}_{k,l\in K} $ are equivalent if there exist maps $\{\eta_{ik}: U_i \cap V_k \to \SG\}_{i \in I,k \in K}$ such that 
    \begin{align*}
        \eta_{ik} \delta_{kl} = \eta_{il} &\text{ on }  U_i \cap V_k \cap V_\ell\text{, and} \\
        \gamma_{ji} \eta_{ik} = \eta_{jk} &\text{ on }  U_i \cap U_j \cap V_k
    \end{align*} 
    for all $i,j \in I $ and $k,l\in K$.
\end{definition}

We obtain the following correspondence between cocycles and principal bundles, similar to the case of groups.
\begin{proposition}[{\cite[Proposition I.3.1]{Mrcun}}] \label{pro:corCocyclePrBundle}
    Let $M$ be a manifold. There is a one-to-one correspondence between $\SG$-cocycles over $M$ and principal $\SG$-bundles over $M$, up to respectively equivalence and isomorphism.
\end{proposition}
\begin{proof}
    Given a $\SG$-cocycle $\{ \gamma_{ij} : U_i \cap U_j \to \SG\}_{i,j \in I}$ over $M$ we define the associated principal $\SG$-bundle $P$ over $M$ as $P = \sqcup_i P_{\gamma_{ii}} / \sim$, where we write $(i,x,g) \sim (j,y,h)$ if and only if $x=y$ and $h = \gamma_{ij}(x) \cdot g$. We define the projection $\pi : P \to M$ as induced by the map $(i,x,g) \mapsto x$, the moment map $\mu: P \to X$ as induced by $(i,x,g) \mapsto \target(g)$ and the action as induced by $h \cdot (i,x,g) \mapsto (i,x,hg)$. We leave the remaining details to the reader (or \cite[Proposition I.3.1]{Mrcun}).
\end{proof}

\begin{remark}\label{rem:SubmCocPb}
    In the particular case of an \'etale groupoid $\SG\rrarrows X$, we note that if the $\gamma_{ii}$ in a cocycle are submersions, the moment map of the associated principle bundle is a submersion. Vice versa, if the moment map $\mu$ of a principle $\SG$-bundle is a submersion, we obtain that the associated $\gamma_{ii}$ are submersions. Hence when $\SG$ is \'etale, the above correspondence specifies to a correspondence between $\SG$-cocycles with submersive $\gamma_{ii}$ and principal $\SG$-bundles with submersive moment map.
\end{remark}

\subsubsection{Morita equivalence}
Groupoids $\SG \rrarrows X$ can be interpreted as smooth objects representing their orbit space $X/\SG$, which is in general a singular space. An example is the holonomy groupoid associated to a foliation, which can be interpreted as representing the leaf space of the foliation. More generally, a Lie groupoid can be used to encode a transverse geometry. 

The groupoid encoding such a transverse geometry does not need to be unique. Going back to the example of the holonomy groupoid, this transverse geometry can also be encoded in the holonomy groupoid restricted to a complete transversal of the foliation. This ambiguity in the choice of groupoid is captured by the notion of Morita equivalence. This is discussed in more detail in~\cite{hoyo2013lie}.

\begin{definition}\label{def:MoritaEquiv}
    Two groupoids $\SG \rightrightarrows X$ and $\SG' \rightrightarrows X'$ are \textbf{Morita equivalent} if there exists a principal $\SG$-$\SG'$-bibundle $P$ with moment maps $\mu: P \to X'$ and $\mu': P \to X$. That is, we require that:
    \begin{itemize}
        \item $\mu: P \to X$ is a (right) principal $\SG'$-bundle with moment map $\mu': P \to X'$,
        \item $\mu': P \to X'$ is a (left) principal $\SG$-bundle with moment map $\mu: P \to X$, and
        \item the actions of $\SG$ and $\SG'$ on $P$ commute.
    \end{itemize}
    We summarize the information of a Morita equivalence in a diagram as
    \[
    	\begin{tikzcd}
	   \SG  \arrow[d, shift right] \arrow[d, shift left]  &  P \arrow[loop left, distance=3em, in =165, out = 195] \arrow[loop right, distance=3em, in =15, out = -15] \arrow[dl,"\mu"] \arrow[dr,"\mu'" swap] & \SG'  \arrow[d, shift right]  \arrow[d, shift left]  \\
	   X &  & X'.
        \end{tikzcd}
    \]
\end{definition}

As an example of an Morita equivalence, we mention the following.
\begin{lemma} \label{lem:MEGamma}
    Let $M$ be an $m$-manifold. The groupoids $\Gamma^M \rrarrows M$ and $\Gamma^m \rrarrows \R^m$ are Morita equivalent.
\end{lemma}
\begin{proof}
    It is the space $\Gamma(M,\R^m)$ that forms the $\Gamma^m$-$\Gamma^M$-bibundle. The groupoid $\Gamma^m$ acts by post-composing, whereas the groupoid $\Gamma^M$ acts be pre-composing. We leave the further details to the reader.
\end{proof}

If $\SG \rrarrows X$ and $\SG' \rrarrows X'$ are Morita equivalent and $E$ is a right principal $\SG$-bundle over $M$, then together they induce a principal $\SG'$-bundle over $M$. To be precise, if $P$ is the $\SG$-$\SG'$-bibundle induced by the Morita equivalence, then $\SG$ acts diagonally on $E \times_X P$ and $(E \times_X P)/\SG$ is a principal $\SG'$-bundle over $M$. We note that if $E$ is the unit principal $\SG$-bundle, the resulting $\SG'$-bundle is just $P$. Up to isomorphism, we can reverse the construction, which implies the following.
\begin{proposition}[{\cite[Proposition II.1.7]{Mrcun}}] \label{pro:MePrBundlesSame}
    Let $\SG$ and $\SG'$ be two groupoids that are Morita equivalent. Then there is a one-to-one correspondence between principal $\SG$ and $\SG'$-bundles over a fixed manifold, up to isomorphism.
\end{proposition}

\subsubsection{\texorpdfstring{$\SG$}{G}-foliations}\label{sec:GFol}
Haefliger \cite{Haef1,Haef2} realized that a foliation $\SF$ of corank $n$ on a manifold $M$ is defined by submersions $s_i : U_i \to \R^n$ and transition functions $\gamma_{ij} : s_j(U_i \cap U_j) \to s_i(U_i \cap U_j)$, where the $U_i$ are opens in $M$ and $\gamma_{ij} \circ s_j = s_i$ on $U_i \cap U_j$. As a consequence, a foliation can be represented by a $\Gamma^n$-cocycle. 

As a generalization we define $\SG$-foliations, which are modeled on the base $X$ of $\SG$. 

\begin{definition}
    Let $\SG$ be an effective groupoid. A $\SG$-foliation on a manifold $M$ is a $\SG$-cocycle $\{\gamma_{ij} : U_i \cap U_j \to \SG \}_{i,j \in I}$, where all $\gamma_{ii}$ are submersions.
\end{definition}

We note that if $\SG = \Gamma^n$, a $\SG$-foliation reduces to an ordinary foliation. Moreover, using the Morita equivalence between $\Gamma^X$ and $\Gamma^n$ from \cref{lem:MEGamma}, we note that if the base manifold $X$ of $\SG$ is of dimension $n$, a $\SG$-foliation defines in particular a foliation of corank $n$.

Using \cref{rem:SubmCocPb}, we relate $\SG$-foliations to a specific class of principal $\SG$-bundles.
\begin{corollary}\label{cor:corGFolSubmPb}
    Let $M$ be a manifold and $\SG$ an effective groupoid. There is a one-to-one correspondence between $\SG$-foliations on $M$ and principal $\SG$-bundles over $M$ with submersive moment map, up to respectively equivalence and isomorphism. 
\end{corollary}

\subsection{Microbundles} \label{sec:prelim2Haefliger}
A microbundle is meant to be the germ of a bundle around a section and hence we speak of a \emph{representative} of a microbundle in the definition below. By a slight abuse of notation we will often identify $M$ with its image under the section $\iota: M \to E$.

\begin{definition} \label{def:MB}
    A \textbf{representative of a (smooth) microbundle} of rank $n$ over an $m$-manifold $M$ consists of 
    \begin{itemize}
        \item an $(m+n)$-manifold $E$,
        \item a smooth submersion $\pi: E \to M$, and 
        \item a smooth section $\iota: M \to E$ of $\pi$,
    \end{itemize}
    such that $E$ is locally trivial. Hereby we mean that for each $x \in M$ there exists a neighborhood $ U \subset M$ of $x$ and a neighborhood $V \subset E$ of $\iota(x)$ satisfying $\iota(U) \subset V$ such that there exists an embedding $\phi: V \to U \times \R^n$ making the following diagrams commute
    \[
    	\begin{tikzcd}
            V \arrow[d, "\pi"] \arrow[r,"\phi"] & U \times \R^n \arrow[d,"\pr_1"] & & V \arrow[r,"\phi"] & U \times \R^n \\
            M \arrow[r,hookleftarrow] &  U & & U \arrow[u,"\iota"] \arrow[r,hookrightarrow] & U \times \{0\} \arrow[u,hookrightarrow].
        \end{tikzcd}
    \]
    Two representatives $(E,\pi,\iota)$ and $(E',\pi',\iota')$ are \textbf{equivalent} if there exists an open neighborhood $W \subset E$ of $\iota(M)$ and a smooth embedding $f:W \to E'$ such that $f \circ \iota = \iota'$ and $\pi = \pi' \circ f$. A \textbf{microbundle} is an equivalence class of representatives.
\end{definition}

The example that Milnor \cite{Milnor64micro} originally used to motivate microbundles was the tangent bundle over manifolds without a differentiable structure.
\begin{example}[{\cite[Lemma 2.1]{Milnor64micro}}]\label{ex:MBtangentbundle}
    Let $M$ be a topological manifold. The topological microbundle $\tau M = (M\times M, \pr_1,\Delta)$ where $\Delta: M \to M\times M$ is the diagonal map, is known as the \textbf{tangent bundle}. If $M$ is a smooth paracompact manifold, the microbundle $\tau M$ is equivalent, as a smooth microbundle, to the tangent bundle $TM \to M$. 
\end{example}

\subsubsection{Foliated microbundles}\label{sec:folmb}

As mentioned in \cref{sec:GFol}, Haefliger realized that foliations of a manifold can be described by a $\Gamma^n$-cocycle where the $\gamma_{ii}$ are submersive. He noticed furthermore that if we drop the submersivity condition on the $\gamma_{ii}$ from a $\Gamma^n$-cocycle, we still obtain a foliation, albeit a singular one. Nowadays such singular foliations are referred to as Haefliger structures or Haefliger microbundles, when represented as foliated microbundles. 
We obtain Haefliger's correspondence between $\Gamma^n$-cocycles and foliated microbundles in \cref{cor:corPGammaNB-FolMB}, as a special case of a more general correspondence between cocycles and microbundles which we discuss in \cref{sec:CorPbMb}. However, we first introduce foliated microbundles and some more general theory.

\begin{definition} \label{def:FolMb}
    A \textbf{representative of a (smooth) foliated microbundle} is a microbundle $E$ endowed with a smooth foliation $\SF$ that is transverse to the fibers of $\pi$ and satisfies that $\corank(\SF)= \rank(E)$. Two smooth foliated microbundles $(E,\pi,\iota, \SF)$ and $(E',\pi',\iota', \SF')$ are \textbf{equivalent} if there exists an equivalence of microbundles that maps leaves of $\SF$ to leaves of $\SF'$. A \textbf{foliated microbundle} is an equivalence class of representatives.
\end{definition}

Continuing with \cref{ex:MBtangentbundle}, we see that the tangent bundle $\tau M$ can be endowed with the structure of a foliated microbundle.
\begin{example}[{\cite[Lemma 2.1]{Milnor64micro}}]\label{ex:MBtangentbundleFol}
     We see that $\tau M$ has a canonical foliation with leaves $ M \times \{*\}$, making it into a foliated microbundle. It is equivalent, as foliated microbundles, to the tangent bundle foliated by the fibers of $\exp: TM \to M$ for any choice of metric on $M$.
\end{example}

We introduce the following terminology to describe how the foliation and manifold interact. 
\begin{definition}\label{def:FolMbSing} 
    Let $(E \to M, \SF)$ be a foliated microbundle over an $m$-dimensional manifold $M$. The foliated microbundle is \textbf{regular} if $M$ is transverse to the foliation $\SF$. The \textbf{foliation $\SF$ has (only) wrinkle singularities} with respect to $M$ if there exists a collection of disjoint opens $\{B_i\}_{i \in I}$ in $M$, each diffeomorphic to $\R^m$, and open neighborhoods $V_i\subset E|_{B_i}$ of $\iota(B_i)$ such that: 
    \begin{itemize}
        \item $E | _{M \setminus \sqcup_i B_i}$ is regular, and 
        \item $\iota(B_i) \to V_i / \SF$ is a wrinkle.
    \end{itemize}
\end{definition}

\subsubsection{\texorpdfstring{$\SG$}{G}-microbundles}
When we endow a microbundle with a $\SG$-foliation, we obtain the notion of a $\SG$-microbundle.

\begin{definition}
    Let $\SG \rrarrows X$ be an effective groupoid, where $X$ is of dimension $n$. A \textbf{representative of a $\SG$-microbundle} over an $m$-manifold $M$ consists of a representative of a microbundle $E \to M$ of rank $n$, endowed with a $\SG$-foliation that is transverse to the fibers of $E$.

    Two representatives $(E,\pi,\iota,\{\gamma_{ij}\}_{i,j\in I})$ and $(E',\pi',\iota',\{\delta_{kl}\}_{k,l\in K})$ are \textbf{equivalent} if there exists an open neighborhood $W \subset E$ of $\iota(M)$ and a smooth embedding $f:W \to E'$ such that $f$ satisfies $f \circ \iota = \iota'$ and $\pi = \pi' \circ f$, and such that the cocycle $f^*\{\delta_{kl}\}_{k,l\in K}$ is equivalent to $\{\gamma_{ij}\}_{i,j\in I}$. A \textbf{$\SG$-microbundle} is an equivalence class of representatives.
\end{definition}

By considering a $\SG$-cocycle representing the $\SG$-foliation that is compatible with a trivializing atlas of the underlying microbundle, we obtain the following description of a representative of a $\SG$-microbundle. 
We see that opens in the base $X$ form the fibers of the representative microbundle, and the germs of its transition functions will lie in $\SG$. Hence we also say that a $\SG$-microbundle is a microbundle with \textbf{structure groupoid} $\SG$.

\begin{lemma} \label{lem:GMbAtlas}
    Let $\SG \rrarrows X$ be an effective groupoid, where $X$ is of dimension $n$. Let $E\to M$ be a representative of a $\SG$-microbundle over a manifold $M$. Then there exists a cover by opens $ \{U_i\}_{i \in I}$ of $M$ such that for each $U_i$ there exists an open neighborhood $V_i \subset E$ of $\iota(U_i)$ and an embedding $\phi_i: V_i \to U_i \times X$ making the following diagrams commute
    \[
    	\begin{tikzcd}
            V_i \arrow[d, "\pi"] \arrow[r,"\phi_i"] & U_i \times X \arrow[d,"\pr_1"] & & V_i \arrow[r,"\phi_i"] & U_i \times X \\
            M \arrow[r,hookleftarrow] &  U_i & & U_i \arrow[u,"\iota"] \arrow[r,hookrightarrow] & U_i \times \{0\} \arrow[u,hookrightarrow].
        \end{tikzcd}
    \]
    Moreover, the transition maps given by the composition
    \begin{equation*} 
        \phi_j(V_i \cap V_j) \xrightarrow{\phi_j^{-1}} V_i \cap V_j \xrightarrow{\phi_i} \phi_i(V_i \cap V_j)
    \end{equation*}
    satisfy the following for all $y \in U_i \cap U_j$ and $z \in \pr_2( \phi_j(V_i\cap V_j))$:
    \begin{itemize}
        \item they are of the form $(y,z) \mapsto (y,(\phi_{ij}(y)) (z))$ where $\phi_{ij}: U_i \cap U_j \to \Diff_\loc(X)$, and 
        \item the assignment $y \mapsto [\phi_{ij}(y)]_{\phi_j(\iota(y))}$ defines a smooth map $U_i \cap U_j \to \SG$. 
    \end{itemize}
\end{lemma}

Since a $\SG$-foliation defines in particular an ordinary foliation, we note that a $\SG$-microbundle is in particular a foliated microbundle. 
In the case that $\SG = \Gamma^n$, we obtain the following.
\begin{corollary} \label{cor:corGammaNMB-FolMB}
    Let $M$ be a manifold. There is a one-to-one correspondence between $\Gamma^n$-microbundles and foliated microbundles over $M$.
\end{corollary}
\begin{proof}
    It remains to show that a foliated microbundle defines a $\Gamma^n$-microbundle. If we start with a foliated microbundle we find trivializing opens $V_i \subset E$ diffeomorphic to $U_i \times \R^n$, where the leaves of the foliation restricted to $V_i$ are sent to $U_i \times \{*\}$. Since the bundle is foliated, the transition functions $\phi_{ij}: U_i \cap U_j \to \Diff_\loc(\R^n)$ as in \cref{lem:GMbAtlas} are constant as functions on $U_i \cap U_j$. Hence we can write $\phi_{ij} \in \Diff_\loc(\R^n)$, implying that the microbundle has structure groupoid $\Gamma^n$.
\end{proof}

\subsubsection{Correspondence between principal bundles and microbundles} \label{sec:CorPbMb}
We now generalize the correspondence between $\Gamma^n$-cocycles and foliated microbundles from Haefliger \cite{Haef1,Haef2} to the larger class of effective groupoids $\SG \rrarrows X$. The reader should think of the microbundle as the associated bundle of a principal $\SG$-bundle. Our motto is that microbundles are more convenient to work with, because we can use arguments from differential topology to study them.
\begin{lemma} \label{lem:corPGB-Mb}
    Let $M$ be a manifold and $\SG \rightrightarrows X$ an effective groupoid. Then there is a one-to-one correspondence between equivalence classes of $\SG$-cocycles and $\SG$-microbundles over $M$.
\end{lemma}
\begin{proof} 
    Let $\{\gamma_{ij}: U_i \cap U_j \rightarrow \SG\}_{i,j\in I}$ be a $\SG$-cocycle. By passing to an equivalent cocycle we can assume the opens $U_i$ to be arbitrarily small. Hence, using that the groupoid is \'etale, we take representatives to find:
    \begin{itemize}
        \item opens $V_i \subset X$ such that $g_i = \source \circ \gamma_{ii}:U_i \to V_i$, and
        \item diffeomorphisms $g_{ij}: V_j \rightarrow V_i$ with $[g_{ij}]_{g_j(x)} = \gamma_{ij}(x)$ for all $x\in U_j$,
    \end{itemize}
    such that we obtain $g_{ij} \circ g_j = g_i$. We define $E = \sqcup_i (U_i \times V_i )/ \sim$ and write $(i,x,a) \sim (j,y,b)$ if and only if $x=y$ and $b = g_{ji}(a)$. By construction this space projects to $M$. Moreover, there is a smooth section $\iota: M \rightarrow E$ given locally over $U_i$ by $(\id,g_i)$. 

    For the other direction, we let $\{U_i\}_{i \in I}$ be a cover by opens of $M$ such that the microbundle trivializes over each $U_i$ and there exist transition functions $\phi_{ij}: U_i \cap U_j \to \Diff_\loc(X)$ as in \cref{lem:GMbAtlas}. Then we define the $\SG$-cocycle $\{\delta_{ij}: U_i \cap U_j \to \Eff(\SG)\}_{i,j \in I}$  by $\delta_{ij}(y) = [\phi_{ij}]_{\phi_j(\iota(y))}$ for $y \in U_i \cap U_j$. 
\end{proof}
\begin{remark}\label{rem:MbHasSmoothStr}
    We note that in the proof of \cref{lem:corPGB-Mb}, the space $E$ admits a smooth structure such that the inclusion $\iota$ and projection $\pi$ are smooth, because the manifold $M$ is smooth and the groupoid $\SG$ is Lie and \'etale. Additionally we use the continuity of the given cocycle, but not its smoothness.
\end{remark}

Then by combining \cref{pro:corCocyclePrBundle,cor:corGammaNMB-FolMB,lem:corPGB-Mb} we recover Haefliger's correspondence.
\begin{corollary}[Haefliger \cite{Haef2}] \label{cor:corPGammaNB-FolMB}
    Let $M$ be a manifold. There is a one-to-one correspondence between isomorphism classes of principal $\Gamma^n$-bundles and foliated microbundles over $M$. 
\end{corollary}

The proofs of \cref{pro:corCocyclePrBundle,lem:corPGB-Mb}, show us how we can obtain from a $\SG$-cocycle respectively a principal $\SG$-bundle or $\SG$-microbundle, and vice versa. By combining these constructions, we obtain the following method of directly obtaining from a $\SG$-microbundle a $\SG$-principal bundle. We end up with a construction similar to the frame bundle in the case that $\SG= \Gamma^n$. 
\begin{remark} \label{rem:corPrBMb}
    Let $\SG \rrarrows X$ be an effective groupoid and let $E\to M$ be a representative of a $\SG$-microbundle. Then the corresponding principal $\SG$-bundle $P \to M$ is the fiber bundle 
    \[P = \left\{ (m,[\phi|_{E_m}]_{\iota(m)}) \;\middle|\;
		\begin{varwidth}{0.6\linewidth}
			$m \in M$ and $\phi:V \to U \times X$  a local trivialization of $E$ containing  $(m,\iota(m)) $ as in \cref{lem:GMbAtlas}
		\end{varwidth}
    \right\}, \]
    where we write by a slight abuse of notation $\iota(m) \in E_m$ instead of $\iota(m) \in E$. 
    The moment map $\mu : P \to X$ is defined by $(m,[\phi]_{\iota(m)}) \mapsto \phi(\iota(m))$ and the action $\SG \timeslr{\source}{\mu}  P  \to P$ is defined by $[g]_{\phi(\iota(m))} \cdot (m,[\phi]_{\iota(m)})  \mapsto (m,[g \circ \phi ]_{\iota(m)})$.
\end{remark}

\subsection{Summary} \label{ssec:summaryBundles}
We summarize the previous sections as follows, where all $\leftrightarrow$ indicate a one-to-one correspondence. This correspondence is up to isomorphism for principal bundles and up to equivalence in the case of cocycles:
\[
    \begin{tikzcd}[column sep={0.8em},row sep = tiny]
        \{ \SG\text{ - cocycles} \} \arrow[r,leftrightarrow] & \{ \text{principal $\SG$-bundles} \} \arrow[r,leftrightarrow,"\SG \text{ effective}"] &[2.6em] \{ \SG \text{-microbundles} \}, & \\
        \{ \Gamma^n \text{ - cocycles} \} \arrow[r,leftrightarrow] & \{  \text{principal $\Gamma^n$-bundles} \} \arrow[r,leftrightarrow] & \{ \Gamma^n  \text{-microbundles} \} \arrow[r,leftrightarrow] & \{ \text{foliated microbundles} \}.
    \end{tikzcd}
\]

\subsection{Homotopy classification of principal bundles}\label{sec:prelim2classspace}
Our overall goal in the context of h-principles is to construct and classify a certain class of principal groupoid bundles. In order to speak of a classification of principal groupoid bundles, which we understand to be \emph{up to homotopy}, we have to construct a \emph{space} of principal groupoid bundles, which is the goal of this section.

To define a space of principal bundles, we have to define what it entails to be a continuous family of $\SG$-bundles. This role will be taken up by the following notion of concordance.
\begin{definition}
    Let $M$ and $K$ be smooth manifolds, where $K$ serves as parameter space. A \textbf{$K$-concordance} of principal $\SG$-bundles over $M$ is a principal $\SG$-bundle over $M \times K$.
\end{definition}
In the particular case where $K=[0,1]$, we refer to a $K$-concordance simply as a concordance. The two principal $\SG$-bundles over respectively $M \times\{0\}$ and $M \times \{1\}$ are then said to be \textbf{concordant}.
Analogously we define a $K$-concordance of microbundles.
We note that $K$-concordances respect isomorphism classes of principal $\SG$-bundles, so that a $K$-concordance is also defined between elements of $\pb{M}{\SG}$. We recall that we introduced the notation $\pb{M}{\SG}$ to denote the set of isomorphism classes of principal $\SG$-bundles over $M$.

It was proven by Haefliger~\cite{Haef2} that any topological groupoid $\SG$ admits a \textbf{classifying space} $B\SG$. We state the following consequence for a certain class of Lie groupoids. 
\begin{proposition} \label{pro:HaefligerClassifying}
Let $M$ be a manifold and $\SG$ a Lie groupoid that is either \'etale or both Hausdorff and second-countable. There is a bijection between concordance classes of principal $\SG$-bundles over $M$ and homotopy classes of maps from $M$ into the classifying space $B\SG$:
\[ \dfrac{\pb{M}{\SG}}{\textrm{concordance}}  \,\cong\, [M,B\SG]. \]
\end{proposition}
As mentioned, the statement holds true more generally in the topological setting, where the manifold $M$ is replaced by a topological space, the groupoid $\SG$ is a topological groupoid, and the bundles (in particular also the concordances) are topological bundles, as long as one restricts to so-called numerable principal bundles\footnote{A principal bundle is called numerable if it can be given as a cocycle defined on opens $\{U_i\}$ such that there exists a locally finite partition of unity $\{\eta_i\}$ satisfying that $\eta_i^{-1}(0,1] \subset U_i$.}. We do not need this generality, but we comment on the transition from the topological to the smooth setting in \cref{sec:classifyingTopSmooth}.

\Cref{pro:HaefligerClassifying} should be understood as a statement about the path-components of the space of isomorphism classes of principal $\SG$-bundles over $M$. This suggests taking $\Maps(M,B\SG)$ as the space of principal $\SG$-bundles over $M$. This is further supported by the following consequence of \cref{pro:HaefligerClassifying}:
\[ \dfrac{\pb{M \times S^i}{\SG}}{\textrm{concordance}}  \,\cong\, [M \times S^i ,B\SG] \,\cong\, [S^i ,\Maps(M,B\SG)]. \]
By choosing base points and hence passing to the pointed setting, this implies that the right hand side computes $S^i$-concordances up to concordance. We interpret this as being the $i$th homotopy group of the space of principal $\SG$-bundles over $M$.

Haefliger's construction of the classifying space of any topological groupoid is explicit and based on Milnor's join construction for the Lie group case. We will later use that the construction is functorial in the groupoid.

\section{Technical details about principal \texorpdfstring{$\SG$}{G}-bundles} \label{sec:TechResultsPb}
\Cref{sec:prelimGroupoids} discussed some of the classical material related to Lie groupoids $\SG$, principal $\SG$-bundles, $\SG$-cocycles and $\SG$-microbundles. Here we discuss two topics that also lie in this direction, but for which the authors were not aware of a reference in literature. Firstly, in \cref{sec:classifyingTopSmooth}, we discuss the smoothing of topological principal $\SG$-bundles. Secondly, we recall that all of the microbundles in \cref{sec:prelim2Haefliger} have effective structure groupoids and are thus endowed with at least a foliation. Hence in \cref{sec:GammaWhMicrobundles} we discuss microbundles without any extra structure and their relation to principal groupoid bundles.

\subsection{Smoothing of principal bundles} \label{sec:classifyingTopSmooth}
In order to study spaces of principal $\SG$-bundles over smooth manifolds $M$, where $\SG$ is a Lie groupoid that is either \'etale or both Hausdorff and second-countable, it turns out to be sufficient to work in the smooth setting. This section provides a justification of this.

Our definitions in previous sections were in the smooth category. However, as mentioned in \cref{rem:groupoidTopDef}, they can all be generalized to the topological category. 
This defines the set $\pbT{M}{\SG}$ of isomorphism classes of topological principal $\SG$-bundles. We prefer the smooth category because it allows us to use arguments from differential topology, similar to our motivation for introducing microbundles to deal with principal bundles.

We were able to restrict ourselves to smooth principal $\SG$-bundles in \cref{pro:HaefligerClassifying}, due to the fact that, when working with sufficiently regular Lie groupoids $\SG$ and smooth manifolds $M$, every topological principal $\SG$-bundle over $M$ can be upgraded to a smooth one via a (topological) concordance. We spend the remainder of this section on proving this fact:

\begin{theorem}\label{cor:ContPBIsSmoothPBSpace}
    Let $M$ be a smooth manifold, and $\SG$ a Lie groupoid that is either \'etale or both Hausdorff and second-countable. Then we have the following bijection
    \[ \frac{\pb{M}{\SG}}{\text{smooth concordance}} \cong \frac{\pbT{M}{\SG}}{\text{topological concordance}}.\]
\end{theorem}

In the specific case where $\SG$ is effective, the result follows quite readily by using microbundles. 

\begin{lemma} \label{lem:smoothingContPB}
    Let $\SG \rrarrows X$ be an effective Lie groupoid and $M$ a smooth manifold. Every topological principal $\SG$-bundle $P_{\mathrm{top}} \to M$ is topologically concordant to a smooth principal $\SG$-bundle $P \to M$. Moreover, $P$ is unique up to smooth concordance.
\end{lemma}
\begin{proof}
    We first apply \cref{lem:corPGB-Mb} to a topological cocycle representing $P_{\mathrm{top}}$, yielding a (representative of) a topological $\SG$-microbundle $(E,\pi,\iota_{\mathrm{top}})$. By \cref{rem:MbHasSmoothStr}, we see that the space $E$ admits a smooth structure such that the projection $\pi: E \to M $ is smooth, because the groupoid $\SG$ is an \'etale Lie groupoid and the manifold $M$ is smooth.
    
    The zero section $\iota_{\mathrm{top}}$ however does not need to be smooth as the cocycle consists of continuous maps. We address this by smoothing $\iota_{\mathrm{top}}$, which can be achieved while staying a section.\footnote{Segal already observed that, in the definition of a smooth microbundle, we do not need to require the section $\iota$ to be smooth because it can always by smoothened via a concordance~\cite{segal1978classifying}.} The interpolation between $\iota_{\mathrm{top}}$ and its smoothing $\iota$ provides a concordance between $(E,\pi,\iota_{\mathrm{top}})$ and $(E,\pi,\iota)$. Converting back to principal bundles, provides us with $P$.

    To show uniqueness, we let $F_0 \to M$ and $F_1 \to M$ be two smooth $\SG$-microbundles, to which $E$ is topologically concordant. We combine these concordances to a topological concordance $F\to M \times [0,1]$ such that $F|_{M\times \{t\}} = F_i$ for all $t$ in a neighborhood of respectively $0$ and $1$. We now apply the above procedure relative to $M \times\{0\}$ and $M \times \{1\}$ to upgrade $F$ to a smooth concordance between $F_0$ and $F_1$. 
\end{proof}

The general case, where $\SG$ is a Lie groupoid which is not necessarily effective, but either \'etale or both Hausdorff and second-countable, is more involved. Hence in the remainder of this section we turn our attention to such Lie groupoids $\SG$. For a trivial principal $\SG$-bundle, we can just smoothen the underlying map into the base $X$ of $\SG$.
\begin{lemma} \label{lem:smoothCocycle1}
    Let $\SG\rrarrows X$ be a Lie groupoid and $\gamma: U \to X$ a continuous map defined on a smooth manifold $U$. The trivial (topological) principal $\SG$-bundle $P_\gamma \to U$ is topologically concordant to a smooth principal $\SG$-bundle. 
\end{lemma}
\begin{proof}
    Smoothing $\gamma$ provides us with a homotopy of maps $H : U \times [0,1] \to X$ such that $H|_{U \times \{0\}}$ equals $\gamma$ and such that $H|_{U \times \{t\}}$ is smooth for $t>0$. The trivial principal $\SG$-bundle $P_H \to U \times [0,1]$ provides a topological concordance between $P_H|_{ U \times \{0\} } = P_\gamma$ and a smooth principal $\SG$-bundle $P_H|_{ U \times \{1\} }$.
\end{proof}

When the principal $\SG$-bundle we consider is not trivial, we do not just have to smoothen the moment maps, but also the transition functions. In terms of cocycles, this means we have to homotope a cocycle $\{\gamma_{ij}: U_i \cap U_j \to \SG\}_{i \in I}$ while preserving the cocycle condition. Here we define a \textbf{$K$-family of $\SG$-cocycles} as a $K$-family of maps $\{t\mapsto (\gamma_{ij}^t :U_i \cap U_j \to \SG)\}_{i \in I}$ such for each $t\in K$ the cocycle condition is satisfied by $\{\gamma_{ij}^t \}_{i \in I}$. We emphasize that the cover $\{U_i\}_{i \in I}$ does not depend on $t\in K$. If $K=[0,1]$ we refer to the family as a \textbf{homotopy of $\SG$-cocycles}. Two $K$-families of $\SG$-cocycles are \textbf{equivalent} if their union can be extended to a $K$-family of $\SG$-cocycles.
Using \cref{pro:corCocyclePrBundle}, we obtain the following.
\begin{lemma} \label{lem:famCocycleConcordance}
    Let $\SG \rrarrows X$ be a Lie groupoid and let $M$ be a smooth manifold. Then a (topological/smooth) $K$-family of $\SG$-cocycles over $M$ defines a (topological/smooth) $K$-concordance of $\SG$-bundles over $M$.
\end{lemma}

To find a homotopy from a continuous cocycle to a smooth cocycle, we will work inductively. To illustrate the main strategy, assume we have a cocycle $\{\gamma_{ij}\}$ defined on two opens $U_1$ and $U_2$ that cover a manifold $M$. We smoothen the moment map $\gamma_{11}$ and lift this homotopy to a homotopy of the transition function $\gamma_{21}$ which we then also smoothen. The cocycle condition $\gamma_{22}=\gamma_{21}\gamma_{12}$ prescribes the smoothing of $\gamma_{22}$ on the overlap $U_1 \cap U_2$. We now choose two opens $V_1$ and $V_2$ such that $\overline{V_i} \subset U_i$ and such that $V_1 \cup V_2=M$. We extend the smoothing of $\gamma_{22}$ to $U_2$ relative to $\overline{V_1}\cap \overline{V_2}$. This will stop the cocycle condition to hold on $U_2 \setminus (\overline{V_1}\cap \overline{V_2})$, but it does hold over $V_2$. The result will hence be a smooth cocycle over $M$ with respect to the cover $\{V_1,V_2\}$. We implement this for general finite covers, and in greater detail, as follows.

\begin{lemma}\label{lem:smoothCocycleN}
    Let $\SG \rrarrows X$ be a Lie groupoid that is either \'etale or both Hausdorff and second-countable, and let $P \to M$ be a topological principal $\SG$-bundle over a smooth compact manifold $M$. Then there exists a topological concordance between $P$ and a smooth principal $\SG$-bundle.
\end{lemma}
\begin{proof}
    We prove this by induction on the size $n$ of the covering of $M$. The base case $n=1$ is covered by \cref{lem:smoothCocycle1}. Let $\{\gamma_{ij} : U_i \cap U_j \to \SG \}_{i,j = 1,\dots ,n}$ be a cocycle defining $P$ where $\{U_i\}_{i=1,\dots,n}$ is a finite cover of $M$. By induction we assume that we are given a homotopy of cocycles $\{ \gamma_{ij}^t \}_{i,j=1,\dots,n-1}$ where for all $i,j \in \{1,\dots,n-1\} $ it holds that $\gamma_{ij}^0=\gamma_{ij}$ and that $\gamma_{ij}^t$ is smooth whenever $t\in(0,1]$. We now want to extend this homotopy of cocycles to include the $\gamma_{ij}$ where $i$ and $j$ are allowed to take the value $n$.

    We however do not extend the homotopy of cocycles $\{ \gamma_{ij}^t \}_{i,j=1,\dots,n-1}$ such that it satisfies the cocycle conditions with respect to the cover $\{U_i\}_{i=1,\dots,n}$, but with respect to a smaller cover. That is, we will define a family of cocycles $\{ \gamma_{ij}^t : \overline{V_i} \cap \overline{V_j} \to \SG\}_{i,j=1,\dots,n}$ where $\{V_i\}_{i=1,\dots,n}$ is a cover by opens of $M$ such that $V_i \subset \overline{V_i} \subset U_i$. This allows us to extend a lift or smoothing over $\overline{V_i}\cap \overline{V_j}$ to $\overline{V_i}$. Moreover, we choose the cover $\{V_i\}_{i=1,\dots,n}$ such that the closures $\overline{V_i}$ are compact and all possible intersections of the $\overline{V_i}$ are cell complexes.

    By induction on $i\leq n$ we now define the families $\gamma_{ni}^{t,s}: \overline{V_n} \cap \overline{V_i} \to \SG$ (and thus their inverses $\gamma_{in}^{t,s}$). Here we have two parameters $t\in[0,t_i]$ with $t_i \in (0,1]$ and $s \in[0,1]$, where $t$ arises from the induction on $n$, and $s$ will be used for additional smoothings in the induction on $i$. We define $\gamma_{ij}^{t,s}=\gamma_{ij}^{t}$ when both $i,j<n$. We construct the $\gamma_{ni}^{t,s}$ such that the following induction hypothesis holds:
    \begin{itemize}
        \item we have $\gamma_{ni}^{0,0} = \gamma_{ni}$ and $\gamma_{ni}^{t,0} = \gamma_{ni}^t$,
        \item the map $\gamma_{ni}^{t,s}: \overline{V_n} \cap \overline{V_i} \to \SG$ is smooth when $t>0$ and $s>0$, and
        \item we have 
        \begin{align*}
            \gamma_{jk}^{t,s} \gamma_{kl}^{t,s} = \gamma_{jl}^{t,s} &\text{ over } \overline{V_j} \cap \overline{V_k} \cap \overline{V_\ell}
        \end{align*}
        for all $t \in [0,\min_i{t_i}]$, $s\in[0,1]$ and all triples $(j,k,l)$ such that \[\{jk\},\{kl\},\{jl\} \notin \{\{i+1, n\},\dots,\{n,n\}\}.\]
    \end{itemize}
    We note that because of the cocycle conditions, a family $\gamma_{ni}^{t,s}$ will be (partly) determined by the $\gamma_{nj}^{t,s}$ for all $j<i$ such that $\overline{V_i} \cap \overline{V_j} \neq \emptyset$. 

    \pfstep{Homotoping $\gamma_{n1}$}
    We start with the base case $i=1$ by constructing $\gamma_{n1}^{t,s}$. 
    The source map $\source : \SG \to X$ is a submersion and therefore a microfibration. Using the compactness of $\overline{V_1}$, this means that there exists $t_1>0$ such that we can lift the homotopy $\gamma_{11}^t$ along $\source$ to a homotopy $\gamma_{n1}^t$ for $t\in [0,t_1]$ where $\gamma_{n1}^0 = \gamma_{n1}$. We note it then holds that $\source (\gamma_{n1}^{t}) = \gamma_{11}$. The homotopy $\gamma_{n1}^t$ generally consists of continuous maps and hence we apply smoothing. We do so fiberwise along the source fibers, using again that $\source$ is a submersion. This produces the required family $\gamma_{n1}^{t,s}: \overline{V_1} \cap \overline{V_n} \to \SG$ of continuous maps with $t\in[0,t_1]$ and $s\in[0,1]$. 

    \pfstep{Homotoping the $\gamma_{ni}$}
    For the inductive step, we assume we have defined $\gamma_{nj}^{t,s}$ with $t \in [0,t_j]$ and $s\in [0,1]$ for all $j<i<n$ such that they satisfy the induction hypothesis. We now construct $\gamma_{ni}^{t,s}$. We proceed similarly as in the case where $i=1$, but now relative to the previously constructed homotopies. 
    
    For each $j<i$ we define $(\tilde \gamma_{ni}^t)^{(j)}$ on $\overline{V_i} \cap \overline{V_n} \cap \overline {V_j}$ by $(\tilde \gamma_{ni}^t)^{(j)} = \gamma_{nj}^{t,0}\gamma_{ji}^t$. By the induction hypothesis, the $(\tilde \gamma_{ni}^t)^{(j)}$ glue to a map $\tilde \gamma_{ni}^t$ defined on $\overline{V_i} \cap \overline{V_n} \cap (\cup_{j < i} \overline{V_j})$. Here $t$ takes values in $[0,\min_{j<i} t_j]$ and $s$ in $[0,1]$. We note that $\tilde \gamma_{ni}^t$ lifts the homotopy $\gamma_{ii}^t$ along $\source$ over $\overline{V_i} \cap \overline{V_n} \cap (\cup_{j < i} \overline{V_j})$, which is a cell complex. Hence we are able to lift the homotopy $\gamma_{ii}^t$ along $\source$ to define the homotopy $\gamma_{ni}^t : \overline{V_n} \cap \overline{V_i} \to \SG$ with $t \in[0,t_i]$ such that $\gamma_{ni}^t$ agrees with $\tilde \gamma_{ni}^t$ over $\overline{V_i} \cap \overline{V_n} \cap (\cup_{j < i}\overline{V_j})$. Here we assume that $t_i \leq \min_{j<i} t_j$. Then the cocycle condition for the $\{\gamma_{ij}^{t,s}\}$ from the induction hypothesis holds when restricted to $s=0$.
    
    The homotopy $\gamma_{ni}^t$ consists again of continuous maps and hence we apply smoothing to obtain a family $\gamma_{ni}^{t,s}$. We do so relative to the previously obtained families $\gamma_{nj}^{t,s}$ with $j<i$, analogous to the above. That is, because of the induction hypothesis we obtain a map $\tilde \gamma_{ni}^{t,s}$ defined on $\overline{V_i}\cap \overline{V_n} \cap (\cup_{j<i} \overline{V_j})$. We now smoothen $\gamma_{ni}^t$ along the fibers of $\source$, which produces the map $\gamma_{ni}^{t,s}$. We do so relative to $\tilde \gamma_{ni}^{t,s}$, which achieves that $\gamma_{ni}^{t,s}$ equals $\tilde \gamma_{ni}^{t,s}$ over $\overline{V_i}\cap \overline{V_n} \cap (\cup_{j<i} \overline{V_j})$. Hence the cocycle condition for the $\{\gamma_{ij}^{t,s}\}$ from the induction hypothesis holds. 

    \pfstep{Homotoping $\gamma_{nn}$} Having constructed all the $\gamma_{ni}^{t,s}$ with $i<n$, we now construct $\gamma_{nn}^{t,s}$. 
    By requiring that $\tilde\gamma_{nn}^{t,s} = \gamma_{ni}^{t,s} \gamma_{in}^{t,s}$ for all $i<n$, we obtain the family $\tilde\gamma_{nn}^{t,s}$ defined over $\overline{V_n} \cap (\cup_{i\leq n}\overline{V_i})$. The family $\tilde\gamma_{nn}^{t,s}$ is well defined, because of the cocycle condition satisfied by the $\gamma_{ij}^{t,s}$ over the cover $\{\overline{V_i}\}$.
    We smoothen $\gamma_{nn}: \overline{V_n} \to X$ to obtain a family $\gamma_{nn}^{t,s}$ with $t \in [0,\min_{i<n} t_i]$ and $s\in [0,1]$ such that $\gamma_{nn}^{0,0} = \gamma_{nn}$ and such that $\gamma_{nn}^{t,s}$ agrees with $\tilde \gamma_{nn}^{s,t}$ over $\overline{V_n}$. Then the family $\{\gamma_{ij}^{t,s} : \overline{V_i} \cap \overline{V_j} \to \SG \}_{i = 1,\dots ,n}$ satisfies the cocycle conditions.
    
    \pfstep{Conclusion} We have constructed a family of continuous cocycles $ \{ \gamma_{ij}^{t,s} :V_i \cap V_j \to \SG \}_{i,j=1,\dots,n}$, which is smooth when $t>0$ and $s>0$. In particular we obtain the homotopy $t\mapsto \{ \gamma_{ij}^{t,t} :V_i \cap V_j \to \SG \}_{i,j=1,\dots,n}$ of continuous cocycles starting at $\{\gamma_{ij}\}$ and ending at a smooth cocycle. By \cref{lem:famCocycleConcordance} this defines a topological concordance from $P$ to a smooth principal $\SG$-bundle.
\end{proof}

We observe that the proof of \cref{lem:smoothCocycleN} is relative in nature, in the sense that we are given a homotopy $\{ \gamma_{ij}^t \}_{1\leq i,j <n}$ and extend this to include $\gamma_{ij}^t$ where $i$ and $j$ may take the value $n$. We do not alter the $\gamma_{ij}^t$, except that we shrink their domain. Hence we obtain the following relative version of \cref{lem:smoothCocycleN}. 
\begin{corollary}\label{lem:smoothCocycleRel}
    Let $\SG \rrarrows X$ be a Lie groupoid that is either \'etale or both Hausdorff and second-countable, and let $P \to M$ be a topological principal $\SG$-bundle over a compact manifold $M$. We assume that $M$ is covered by two opens $V$ and $U$ such that there exists a topological concordance $Q\to V \times [0,1]$ from $P|_V$ to a smooth principal $\SG$-bundle. Then for any compact subset $V'$ of $V$, the concordance $Q|_{V'}$ can be extended to a topological concordance between $P$ and a smooth principal $\SG$-bundle.
\end{corollary}
\begin{proof}
    We cannot assume that $Q$ is defined by a homotopy of cocycles. Hence we proceed as follows: 
    Let $C$ be a compact subset of $M$ such that $V'\subset \Int(C) \subset C \subset V$. By compactness of $C$ and $[0,1]$ we assume that there exists $T\in[0,1)$ and a cover $\{U_i\}_{i,j\leq n}$ by opens of $M$ such that
    \begin{itemize}
        \item there exists $m<n$ such that $C\subset \cup_{j\leq m } U_j$,
        \item $Q|_{\cup_{j\leq m}U_j\times[T,1]}$ is defined by a homotopy of cocycles $\{ \gamma_{ij}^t :  U_i \cap U_j  \to \SG \}_{i,j\leq m}$ with $t\in[T,1]$ and $m\leq n$, satisfying that the cocycle is smooth for $t=1$, and
        \item $Q$ extends to a topological concordance $\tilde Q $ over $M\times[0,T]$ such that $\tilde{Q}|_{M\times\{T\}}$ is defined by a cocycle $\{ \gamma_{ij}: U_i \cap U_j \to \SG \}$, where $\gamma_{ij}^T = \gamma_{ij}$.
    \end{itemize}
    
    We let $\{W_i\}_{i,j\leq n}$ be a cover by opens of $M$ such that $\overline{W_i} \subset U_i$ and $V' \subset \cup_{j\leq m} W_i \subset \cup_{j\leq m} \overline{W_i} \subset \Int(C)$.
    Then as in the proof of \cref{lem:smoothCocycleN} we extend the homotopy of cocycles $\{\gamma_{ij}^t:  U_i \cap U_j  \to \SG\}_{i,j\leq m}$ with $t\in[T,1]$ to allow for $i$ and $j$ to take values in $\{1,\dots,n\}$. This produces a homotopy of cocycles $\{\gamma_{ij}^t: W_i \cap W_j \to \SG\}_{i,j\leq n}$ with $t\in[0,1]$, which restricted to $V'$ defines $Q|_{V'\times[T,1]}$ and is smooth for $t=1$. Hence we have constructed a topological concordance between $\tilde Q|_{M \times\{T\}}$ and a smooth principal $\SG$-bundle which extends $Q|_{M \times[T,1]}$. Composing with $\tilde Q$ provides us with the topological concordance between $P$ and a smooth principal $\SG$-bundle extending $Q|_{V'\times[0,1]}$.
\end{proof}

Combining \cref{lem:smoothCocycle1,lem:smoothCocycleN} provides us with the claim that we can smoothen any topological principal $\SG$-bundle over a smooth manifold. Since concordances are themselves principal $\SG$-bundles, the relative case from \cref{lem:smoothCocycleRel} tells us that this smoothing is unique up to smooth concordance. We combine these two claims in the following proposition.
\begin{proposition} \label{pro:smoothCocycle}
    Let $\SG \rrarrows X$ be a Lie groupoid that is either \'etale or both Hausdorff and second-countable, and let $P\to M$ be a topological principal $\SG$-bundle over a smooth manifold $M$. Then $P$ is topologically concordant to a smooth principal $\SG$-bundle over $M$. Moreover, the smooth principal $\SG$-bundle is unique up to smooth concordance.
\end{proposition}
\begin{proof}
    The case where $M$ is compact follows from inductively combining \cref{lem:smoothCocycle1,lem:smoothCocycleN}.
    
    To show uniqueness, we let $Q_0 \to M$ and $Q_1 \to M$ be two smooth principal $\SG$-bundles, to which $P$ is topologically concordant. We combine these concordances to a topological concordance $Q\to M \times [0,1]$ such that $Q|_{M\times \{t\}} = Q_i$ for all $t$ in a neighborhood of respectively $0$ and $1$. We now smoothen $Q$ relative to $M \times \{0,1\}$. This constructs a smooth concordance between $Q_0$ and $Q_1$. 

    If $M$ is non-compact, we reduce to the compact case using an exhaustion by compacts. That is, we let $\{K_i\}_{i \in \N}$ be an exhaustion of $M$ by compacts and define the ``annuli'' $A_i = \Int{K_{i+1}} \setminus K_{i-1}$ for $i\geq1$ and $A_0=\Int{K_1}$. Since all the $A_{2i}$ are disjoint, we can reason for all of them at the same time. Hence we first smoothen $P$ over all $A_{2i}$ simultaneously. Next, we smoothen $P$ over all $A_{2i+1}$ at once relative to the obtained smoothing over the $A_{2i}$. The result is a smoothing of $P$ over $M$.
\end{proof}

The above proposition directly implies \cref{cor:ContPBIsSmoothPBSpace}.

\subsection{Microbundles without extra structure} \label{sec:GammaWhMicrobundles}
In \cref{sec:CorPbMb} we discussed the correspondence between $\SG$-microbundles and $\SG$-cocycles when the groupoid $\SG$ is effective. Any $\SG$-microbundle comes with a canonical foliation, and in particular we obtained in \cref{cor:corGammaNMB-FolMB} a one-to-one correspondence between foliated microbundles and $\Gamma^n$-microbundles. This raises the question whether microbundles, without any extra structure, can also be related to a cocycle taking values in a groupoid.

The property that ensured that the locally defined foliations pieced together to a global foliation defined on the entire microbundle, was that the groupoids $\SG$ we considered, and in particular $\Gamma^n$, were \'etale. Hence we recall that we defined in \cref{sec:Gamma} the groupoid $\Gamma^{n,\Whitney}$. As a set it agrees with $\Gamma^n$, but we endow $\Gamma^{n,\Whitney}$ with the Whitney topology instead of the \'etale topology. This makes $\Gamma^{n,\Whitney}$ just a topological groupoid, whereas $\Gamma^n$ is a Lie groupoid. For notational convenience we shorten $\Gamma^{n,\Whitney}$ to $\Gamma^{\Whitney}$ in this section.  We now claim that a microbundle defines a $\Gamma^{\Whitney}$-cocycle.
\begin{lemma} \label{lem:MbToGammaWhCocycle}
    A microbundle $E \to M$ canonically defines a $\Gamma^{\Whitney}$-cocycle.
\end{lemma}
\begin{proof}
    Let $\{U_i\}$ be a cover by opens of $M$ such that $E|_{U_i}$ trivializes in a neighborhood of $\iota(M)$. That is, we assume there exist opens $V_i \subset E$ of $\iota(U_i)$ and embeddings $\phi_i : V_i \to U_i \times \R^n$ such that $\phi_i$ respects the projection onto $U_i$. Then the transition functions $U_i \times \R^n \to U_j \times \R^n$ are of the form $(x,y) \mapsto (x,(\phi_{ji}(x))(y))$. We now define the $\Gamma^{\Whitney}$-cocycle $\{\gamma_{ij}: U_i \cap U_j \to \Gamma^{\Whitney}\}$ by $\gamma_{ij}(x)=[\phi_{ij}]_{\phi_i(\iota(x))}$.
\end{proof}

However, not every $\Gamma^{\Whitney}$-cocycle defines a smooth microbundle, which can be seen as follows: Given a $\Gamma^{\Whitney}$-cocycle $\{ \gamma_{ij}: U_i \cap U_j \to \Gamma^{\Whitney} \}$ we are able to find a neighborhood $V_i\subset \R^n$ of $\gamma_{ii}(U_i)$ and a map $g_{ij}(x): V_j \to V_i$ for each $x \in U_i \cap U_j$ that represents $\gamma_{ij}(x)$. Hereby we mean that the maps $g_{ij}(x)$ satisfy $[g_{ij}(x)]_{\gamma_{jj}(x)}=\gamma_{ij}(x)$ for all $x\in U_i\cap U_j$. The map $x\mapsto [g_{ij}(x)]_{\gamma_{jj}(x)}$ is then continuous for the Whitney topology and hence the jets $j^r_{\gamma_{jj}(x)}(g_{ij}(x))$ vary smoothly in $x$ for all $r\in \N \cup\{\infty\}$. However, the transition maps $(x,y)\mapsto(x,(g_{ij}(x))(y))$ are generally not smooth, and as a result they generally do not define a smooth microbundle.
\begin{example}
    An example of the above phenomenon is the continuous map $\gamma: I \to \Gamma^{\Whitney} = \Gamma^{1,\Whitney}$ from \cref{ex:WhitneyEtaleTop}. For any neighborhood $V\subset \R$ of $0$, the map $\gamma$ does not allow for a representative $g: V\to V$ such that both $[g(x)]_0=\gamma(x)$ for all $x \in I$ and such that the map $I\times V\to I \times V$ given by $(x,y)\to(x,g(x)y)$ is smooth.
\end{example}

Instead we obtain the following:
\begin{lemma} \label{lem:PbToMbWhitney}
    A $\Gamma^{\Whitney}$-cocycle $\{\gamma_{ij}: U_i \cap U_j \to \Gamma^{\Whitney} \}$ defines a smooth microbundle if there exists a family of smooth embeddings $g_{ij}(x): V_j \to V_i$ indexed by $x \in U_i \cap U_j$ where $V_i\subset \R^n$ is a neighborhood of $\gamma_{ii}(U_i)$ satisfying that
    \begin{itemize}
        \item $[g_{ij} (x)]_{\gamma_{jj}(x)} = \gamma_{ij}(x)$, and
        \item the map $U_i \times V_j \to U_j \times V_i$ defined by $(x,y)\mapsto(x,(g_{ij}(x))(y))$ is smooth.
    \end{itemize}
\end{lemma}

Combining \cref{lem:MbToGammaWhCocycle,lem:PbToMbWhitney}, we obtain the following. 
\begin{corollary}
    There is a one-to-one correspondence between microbundles and equivalence classes of $\Gamma^{\Whitney}$-cocycles which have a representative cocycle satisfying the conditions of \cref{lem:PbToMbWhitney}.
\end{corollary}

Given a $\SG$-microbundle, we directly construct a principal $\SG$-bundle from it in \cref{rem:corPrBMb}. This also provides a direct way to obtain from a microbundle a principal $\Gamma^{\Whitney}$-bundle with the same underlying equivalence class of $\Gamma^{\Whitney}$-cocycles: 
\begin{remark}\label{lem:MbToPBGammaWhitney}
    Let $E \to M$ be a microbundle. Then the associated principal $\Gamma^{\Whitney}$-bundle $Q \to M$ consists of the fiber bundle
    \[ Q= \{(m,[\phi]_{\iota(m)}) \mid m \in M \text{ and } \phi: U \to \R^n \text{ an embedding of an open } U \subset E_m  \text{ containing } \iota(m)  \},  \]
    where we recall that we write by a slight abuse of notation $\iota(m) \in E_m$ instead of $\iota(m) \in E$. The bundle $Q$ has moment map $\mu : Q \to \R^n$ defined by $(m,[\phi]_{\iota(m)}) \mapsto \phi(\iota(m))$. The action $\Gamma^{\Whitney} \timeslr{\source}{\mu} Q  \to Q$ is defined by $[f]_{\phi(\iota(m))} \cdot (m,[\phi]_{\iota(m)}) \mapsto (m,[f \circ \phi ]_{\iota(m)})$. 
\end{remark}

\subsubsection{Lifting principal \texorpdfstring{$J^r\Gamma^n$}{J\^{}rΓ\^{}n}-bundles to microbundles} \label{sec:JrGammaToMb}
A disadvantage of $\Gamma^{\Whitney}$ is that it is just a topological groupoid, whereas we are interested in smooth microbundles. In \cref{sec:Gamma} we however also introduced the groupoid $J^r\Gamma^n$ which is a Lie groupoid when $r\in\N$. Moreover, the natural groupoid morphism $j^r: \Gamma^{\Whitney} \to J^r\Gamma^n$ is surjective and has contractible fibers on the level of both the base and arrow space. Hence we now consider the question whether $J^r\Gamma^n$-cocycles, and hence principal $J^r\Gamma^n$-bundles, lift to define a microbundle.

\begin{remark}
    Throughout this section we will restrict ourselves to $r\in\N$ (or even $r\in\N_{>0}$). In these cases the groupoid $J^r\Gamma^n$ is Lie and hence we consider only smooth cocycles and smooth principal $J^r\Gamma^n$-bundles. \end{remark}

On the level of cocycles we define lifting in the following straightforward manner.
\begin{definition}
    Given a $J^r\Gamma^n$-cocycle $\{\gamma_{ij}: U_i \cap U_j \to J^r\Gamma^n\}_{i,j\in I}$ and a $\Gamma^{\Whitney}$-cocycle $\{\delta_{ij}: U_i \cap U_j \to \Gamma^{\Whitney}\}_{i,j\in I}$ defined on the same cover $\{U_i\}_{i \in I}$ of $M$, we say that $\{\delta_{ij}\}_{i,j\in I}$ \textbf{lifts} $\{\gamma_{ij}\}_{i,j\in I}$ if $j^r \delta_{ij} = \gamma_{ij}$ for all $i,j \in I$. 
\end{definition}
Hence to define what it means for a microbundle or principal $\Gamma^\Whitney$-bundle to lift a $J^r \Gamma$-cocycle, we can ask that the respective bundles can be defined by a $\Gamma^\Whitney$-cocycle lifting the given $J^r \Gamma$-cocycle. Analogously, we define when a microbundle or principal $\Gamma^\Whitney$-bundle lifts a principal $J^r \Gamma$-bundle.

We can also state when a microbundle lifts a principal $J^r\Gamma$-bundle without the use of cocycles by using instead \cref{lem:MbToPBGammaWhitney}, which constructs from a microbundle $E$, the associated principal $\Gamma^{\Whitney}$-bundle $Q \to M$. By taking $r$-jets of the elements in the total space of $Q$ we obtain a principal $J^r \Gamma^n$-bundle, which we denote by $J^r Q \to M$. We refer to $J^r Q$ as the principal $J^rQ$-bundle associated to $E$, since they define the same equivalence class of $J^r\Gamma^n$-cocycles. Explicitly, we define $J^rQ$ as follows.
\begin{definition}\label{def:MbToJrGammaPb}
    Fix $r\in\N$ and let $E\to M$ be a microbundle. The associated principal $J^r\Gamma^n$-bundle $J^rQ \to M$ consists of the fiber bundle
    \[ J^rQ= \left\{(m,j^r_{\iota(m)}\phi) \;\middle|\; 
        \begin{varwidth}{0.6\linewidth}
        $m \in M$  and  $\phi: U \to \R^n$  an embedding of an open $U \subset E_m $  containing  $\iota(m)$ \end{varwidth} \right\}.  \]
    The bundle $J^rQ$ has moment map $\mu : J^rQ \to \R^n$ defined by $(m,j^r_{\iota(m)}\phi) \mapsto \phi(\iota(m))$. The action $J^r \Gamma^n \timeslr{\source}{\mu} J^rQ  \to J^rQ$ is defined by $j^r_{\phi(\iota(m))} f \cdot (m,j^r_{\iota(m)} \phi) \mapsto (m,j^r_{\iota(m)} (f \circ \phi))$. 
\end{definition}

We obtain the following necessary and sufficient condition for a microbundle to lift a principal $J^r\Gamma^n$-bundle. It is equivalent to reducing to cocycles, under the one-to-one correspondence between $J^r\Gamma^n$-cocycles and principal $J^r\Gamma^n$-bundles from \cref{pro:corCocyclePrBundle}.

\begin{lemma} \label{lem:MbLiftsPb}
    Fix $r\in\N$. Let $P \to M$ a principal $J^r \Gamma^n$-bundle and let $E \to M$ be a microbundle with associated principal $J^r \Gamma^n$-bundle $J^r Q \to M$. Then, the microbundle $E$ lifts $P$, if and only if $J^rQ$ is isomorphic to $P$ as a principal $J^r\Gamma^n$-bundle.
\end{lemma}

In the remainder of this section we prove the following statement.
\begin{theorem}\label{lem:JrGPbToMb}
    Fix $r\in\N_{>0}$. Let $M$ be a manifold and $P \to M $ a principal $J^r \Gamma^n$-bundle. Then there exists a microbundle $E \to M$ that lifts $P \to M$. Moreover, this microbundle is unique up to concordance.  
\end{theorem} 

\begin{remark}
    The smooth version of the Kister-Mazur theorem~\cite{kister1964microbundles,siebenmann1977stable} tells us that every microbundle of rank $n$ is equivalent to a vector bundle of rank $n$, which is unique up to isomorphism of vector bundles. The latter are in one-to-one correspondence with principal $\GL(n)$-bundles. We observe that such bundles can be interpreted as the linearizations of principal $J^1\Gamma$-bundles, which are affine in nature, by associating to a jet its differential. Under this observation, \cref{lem:JrGPbToMb} can be seen as an inverse to this association of principal $\GL(n)$-bundles to microbundles, since it lifts principal $J^r\Gamma$-bundles, and in particular principal $J^1\Gamma$-bundles, to microbundles.
\end{remark}

We start with the local statement, where the $J^r \Gamma^n$-bundle is trivial (recall \cref{ex:trivPb}).

\begin{lemma} \label{lem:TrivJrGPbtoMb}
    Fix $r\in \N$. Let $M$ be a manifold and $f: M \to \R^n$ a smooth map. The trivial principal $J^r\Gamma^n$-bundle $P_f \to M$ lifts to a microbundle $E\to M$.
\end{lemma}
\begin{proof}
    We define the microbundle $E = M \times \R^n \to M$ with the inclusion $\iota: M \to E$ defined by $x \mapsto (x,f(x))$. Hence $E$ is defined by the $\Gamma^{\Whitney}$-cocycle consisting of the single map $\delta:M \to \Gamma^{\Whitney}$ defined by $x\mapsto [\id]_{f(x)}$. On the other hand, we see that $P_f$ is defined by the $J^r\Gamma^n$-cocycle consisting of the single map $\gamma:M \to J^r\Gamma^n$ defined by $x\mapsto j^r_{f(x)}\id$. Hence we see that $\delta$ lifts $\gamma$.
\end{proof}

Next we assume that we are given two microbundles $E_i \to U_i$ that lift the respective restrictions $P|_{U_i} \to U_i$ of a $J^r \Gamma^n$-bundle $P \to M$, where the $U_i$ are opens in $M$. We then want to conclude that $P$ lifts on the whole of $M$, for which we reason as follows. If $U_1 \cap U_2 \neq \emptyset$, we have an isomorphism of the principal $J^r \Gamma^n$-bundles $P|_{U_1} $ and $P|_{U_2}$ over the intersection $U_1\cap U_2$. This provides us with a family of $r$-jets between fibers of $E_1$ and $E_2$ when restricted to $U_1 \cap U_2$. To glue the two microbundles to a single microbundle $E \to U_1 \cup U_2$, we need to lift the family of $r$-jets to a family of fiberwise diffeomorphisms, establishing the equivalence of $E_1$ and $E_2$ restricted to $U_1\cap U_2$. To this end, we introduce some notation, and prove that such lifts exist.

\begin{definition}
    Let $(E_1,\pi_1,\iota_1)$ and $(E_2,\pi_2,\iota_2)$ be two microbundles over a manifold $M$. The fiber bundle $J^r\Gamma^n_{\mathrm{fib}}(E_1,E_2) \to M$ over $M$ is the fiber bundle whose fiber over $m \in M$ consists of elements $j^r_{\iota_1(m)}f$, where $f$ is an embedding into $(E_2)_m$ of a neighborhood $U \subset (E_1)_m$ of $\iota_1(m)$ and where $f$ satisfies $f(\iota_1(m))=\iota_2(m)$.
\end{definition}

\begin{lemma}\label{lem:liftJetMbtoGermMb}
    Fix $r\in\N_{>0}$. Let $E_1$ and $E_2$ be representatives of two microbundles over a manifold $M$ of the same rank and let $f: M \to J^r\Gamma_{\mathrm{fib}}(E_1,E_2)$ be a section. Then there exists a family of maps $F: (E_1)_m \to (E_2)_m$ such that $j^r_{\iota_1(m)} F = f(m)$ and such that $(x,y)\mapsto(x,F(x)(y))$ is an embedding $E_1 \to E_2$. The map $F$ is unique up to homotopy. 
\end{lemma}
\begin{proof}
    Let $\{U_i\}_{i\in I}$ be a cover by small enough opens of $M$, such that we can use the local triviality of $E_1$ and $E_2$ to describe the restriction $f|_{U_i}$ as a map $U_i \to J^r\Gamma_{\mathrm{fib}}(\R^v,\R^v)$. Here $v$ is the rank of the fiber bundles $E_1$ and $E_2$. Then, by taking polynomial representatives, we find an $U_i$-family of embeddings $\tilde F_i(m): \R^v\to\R^v$ such that $j_{\iota_1(m)}^rF(m) = f(m)$ for all $m\in U_i$ and such that $(x,y) \mapsto(x,F(x)(y))$ is an embedding of $U_i \times \R^v \to U_i \times \R^v$. Using a partition of unity $\{\eta_i\}_{i\in I}$ subordinate to the cover $\{U_i\}_{i\in I}$ we glue the maps $F_i$ to the requested map $F \coloneqq \sum_{i \in I} \eta_i F_i$. Since we have $j_{\iota_1(m)} F_i = f(m)$ for all $i \in I$, we obtain that $j_{\iota_1(m)} F = f(m)$.

    The uniqueness of $F$ up to homotopy follows since the choice of $\tilde F_i$ is unique up to homotopy.
\end{proof}
We note that the map $F$ from \cref{lem:liftJetMbtoGermMb} shows in particular that the microbundles $E_1$ and $E_2$ are equivalent. With this lemma we now implement the lifting of a principal bundle in the setting we just discussed. 
\begin{lemma} \label{lem:CombineJrGPbtoMb}
    Fix $r\in\N_{>0}$. Let $M$ be a manifold and $U_1,U_2$ two open subsets of $M$ such that $M = U_1 \cup U_2$. Let $P \to M $ be a principal $J^r \Gamma^n$-bundle and assume we have lifted $P_i \coloneqq P|_{U_i} \to U_i $ to microbundles $E_i \to U_i$ respectively. Then there exists a microbundle $E \to M$ that lifts $P \to M$. 
\end{lemma}
\begin{proof}
    Since the bundles $P_i\to U_i$ are lifted to microbundles $E_i \to U_i$, there exist by \cref{lem:MbLiftsPb} principal $\Gamma^{\Whitney}$-bundles $Q_i \to U_i$ such that $J^rQ_i$ and $P_i$ are isomorphic as principal $J^r \Gamma^n$-bundles. Moreover, we know that the bundles $P_1\to U_1$ and $P_2\to U_2$ together form the bundle $P \to M$, and hence they are isomorphic when restricted to $U_1 \cap U_2$. Combining these two observations, we know there exists an isomorphism $F : J^r Q_1|_{U_1 \cap U_2} \to J^rQ_2|_{U_1 \cap U_2}$ of principal $J^r \Gamma^n$-bundles that lifts the identity map over the intersection $U_1 \cap U_2$. 

    We recall from \cref{def:MbToJrGammaPb} that $J^rQ_i \to U_i$ consists of  
    \[J^r Q_i = \left\{(m,j^r_{\iota_i(m)} \phi) \;\middle|\; 
        \begin{varwidth}{0.6\linewidth}
        $m \in U_i$  and $\phi: U \to \R^n$ an embedding of an open  $U \subset (E_i)_m$  containing $ \iota_i(m) $
        \end{varwidth}
    \right\}.\]

    Consider a point $m \in U_1 \cap U_2$ and an element $(m,j^r_{\iota_1(m)} \phi_1) \in J^r Q_1$. We write the element $F(m,j^r_{\iota_1(m)} \phi_1) \in J^r Q_2$ as $(m,j^r_{\iota_2(m)} \phi_2)$. Since $F$ respects the moment maps we note that we have $\phi_1(\iota_1(m)) = \phi_2(\iota_2(m))$. Hence we obtain that $j^r_{\iota_1(m)} (\phi_2^{-1} \circ \phi_1) $ is an element of $ J^r\Gamma^n((E_1)_m,(E_2)_m)$ which sends $\iota_1(m)$ to $\iota_2(m)$. Moreover, the map $j^r_{\iota_1(m)} (\phi_2^{-1} \circ \phi_1) $ does not depend on the initial choice of $j^r_{\iota_1(m)} \phi_1$ in the fiber of $J^r Q_1$ over $m$, since $F$ is invariant under the action of $J^r\Gamma^n$. Hence we obtain a section $\Phi: U_1 \cap U_2 \to J^r\Gamma^n_{\mathrm{fib}}(E_1,E_2)$, of which we now show its smoothness.

    Since $\pi_1: J^rQ_1 \to U_1$ is a surjective submersion, it admits for all $m \in U_1 \cap U_2$ a (smooth) section $\sigma_m$ defined on a neighborhood $V_m$ of $m$. The section $\sigma_m$ then induces as above a local section $\Phi_m : V_m \to J^r\Gamma^n_{\mathrm{fib}}(E_1,E_2)$, which is smooth because $\sigma$ is smooth. Since $\Phi_m(x)$ does not depend on $\sigma(x)$, it moreover follows that $\Phi_m = \Phi|_{V_m}$. Hence the section $\Phi$ is smooth.

    By \cref{lem:liftJetMbtoGermMb} it follows that $E_1|_{U_1\cap U_2}$ and $E_2|_{U_1\cap U_2}$ are equivalent microbundles, showing that they glue to a microbundle $E \to M$.
\end{proof}

We now combine \cref{lem:TrivJrGPbtoMb,lem:CombineJrGPbtoMb} to show that any principal $J^r\Gamma^n$-bundle lifts to a microbundle. Since \cref{lem:CombineJrGPbtoMb} is relative in nature, we also obtain that the lift is unique up to concordance. Hence we obtain a proof of \cref{lem:JrGPbToMb}.
\begin{proof}[Proof of \cref{lem:JrGPbToMb}]
    We first assume that $M$ is compact and find a finite covering $\{U_i\}_{i = 1,\dots,N}$ by opens such that $P|_{U_i}$ is trivial. Using induction on $i \in I$ we lift $P$ to a microbundle when restricted to $\cup_{j \leq i} U_j$. That is, for the base case we lift $P|_{U_1}$ to a microbundle using \cref{lem:TrivJrGPbtoMb}. Assuming we have already lifted $P$ over $\cup_{j \leq i-1} U_j$, we lift $P|_{U_i}$ to a microbundle using \cref{lem:TrivJrGPbtoMb} and glue the two microbundles to form a microbundle over $\cup_{j \leq i} U_j$ using \cref{lem:CombineJrGPbtoMb}. When the induction ends, we obtain the requested microbundle over $M$.

    To show that the constructed microbundle $E \to M$ is unique up to concordance, we assume that we are given two microbundles $E \to M$ and $\tilde E\to M$ that both lift $M$. Our goal is then to construct a concordance between $E$ and $\tilde E$. Hence we consider the constant concordance $\overline P \to M \times [0,1]$ of principal $J^r \Gamma^n$-bundles between $P$ and itself. We have lifts $E$ and $\tilde E$ of $\overline P$ over respectively $M \times \{0\}$ and $M \times \{1\}$, which we extend collar neighborhoods of $M \times \{0\}$ and $M \times \{1\}$. We now apply the above reasoning to $\overline P$, relative to the lifts over $M \times \{0,1\}$.

    If $M$ is not compact, we reduce to the compact case using an exhaustion by compacts as detailed in the proof of \cref{pro:smoothCocycle}.
\end{proof}
\section{Groupoids for \texorpdfstring{$\Diff$}{Diff}-invariant relations} \label{sec:Rgroupoids}
Our results about the classification of principal $\GammaR$-bundles are proven in \cref{sec:wrinklingHaefliger}. To this end, we now adapt the classic material from \cref{sec:prelimGroupoids} to the more general setup of arbitrary $\Diff$-invariant differential relations. 

We start in \cref{sec:prelimFolTransvStr} by introducing $\SR$-foliations, which are foliations with a transverse geometry modeled on a $\Diff$-invariant differential relation $\SR$. We construct a groupoid $\GammaR$ describing such $\SR$-foliations. We prove the Morita equivalence of two different models of $\GammaR$ in \cref{sec:MEGammaR}. In \cref{ssec:RMb} we endow microbundles with an $\SR$-foliation; these objects will be the singular analogues of $\SR$-foliations. We then  show how these relate to principal $\GammaR$-bundles. We summarize the correspondences of the various notions in \cref{ssec:summaryRBundles}. We end with tautological $\GammaR$-bundles in \cref{sec:tautBundle}, which provide a useful tool for proving the results in \cref{sec:wrinklingHaefliger}.

\subsection{Foliations with transverse structure}\label{sec:prelimFolTransvStr}
From this point onward we denote by $\SR$ an open and $\Diff$-invariant differential relation. We saw in \cref{ssec:diffInvariant} that a $\Diff$-invariant differential relation $\SR$ of dimension $n$ is a certain functor with domain the category of $n$-dimensional manifolds. In non-commutative geometry, one thinks of leaf spaces of corank $n$ foliations as generalizations of manifolds. Hence, morally speaking, the following definitions say that we can extend the domain of $\SR$ to the category of leaf spaces.

We start with two equivalent definitions of transverse $\SR$-structures on a given foliation of the manifold. Next, we note that foliations together with an $\SR$-transverse structure are just a specific case of the $\SG$-foliations from \cref{sec:GFol}.

\begin{definition}\label{def:RTransStrCocycle}
    Let $M$ be an $m$-dimensional manifold endowed with a foliation $\SF$ of corank $n$ defined by a maximal collection of 
    \begin{itemize}
        \item opens $\{U_i\}_{i \in I}$ covering $M$,
        \item submersions $\phi_i: U_i \to \R^n$, for each $i \in I$, with $\ker(d\phi_i) = T\SF$, and
        \item transition functions $\rho_{ij}: \phi_j(U_i \cap U_j) \to \phi_i(U_i \cap U_j)$ satisfying the cocycle condition $\rho_{ij} \rho_{jk} = \rho_{ik}$, where $\rho_{ii}=\phi_i$ for all $i,j,k \in I$.
    \end{itemize} An \textbf{$\SR$-transverse structure} is a collection $\{f_i : \phi_i(U_i) \rightarrow \Psi(\phi_i(U_i))\}_{i \in I}$ of solutions of $\SR$ satisfying $(\rho_{ji})_* f_i = f_j$ for all $i,j \in I$. Here $(\rho_{ji})_*$ denotes the action by pushforward of local diffeomorphisms on sections of $\Psi$.
\end{definition}

The $\SR$-transverse structure relies on the $\Diff$-invariance of $\SR$ to define the $f_i$ and the action of the $\rho_{ij}$ on the $f_j$. We interpret it as saying that the local solutions $f_i$ glue to a global solution $F$ of $\SR$ on the leaf space of $\SF$. The pair $(\SF,F)$ is said to be an \textbf{$\SR$-foliation}.

An alternative method of encoding an $\SR$-transverse structure is the following, where we formulate it as solutions of $\SR$ on transversals to the foliation, invariant under holonomy.

\begin{lemma}\label{def:RTransStrTransversal}
    Let $M$ be an $m$-dimensional manifold endowed with a foliation $\SF$ of corank $n$. An $\SR$-transverse structure on $\SF$ is equivalent to a collection consisting of 
    \begin{itemize}
        \item all transversals $T$ to $\SF$ of dimension $n$, and
        \item solutions $F_T: T \to \SR(T)$ such that the $F_T$ are invariant under holonomy. That is, we require that for any $x\in T$ and $[f]_x \in \Hol(\SF)$ we have $[f_*( F_{T}) ]_{x} = [F_{T'}]_{f(x)}$ for any transversal $T'$ containing $f(x)$.
    \end{itemize}
\end{lemma}
\begin{proof} 
    Let an $\SR$-foliation $(\SF,F)$ on a manifold $M$ be given as in \cref{def:RTransStrCocycle}. Let $T_i\subset U_i$ be an $n$-dimensional transversal to $\SF$ such that $\phi_i(T_i) = \phi_i(U_i)$. We define the solution $F_{T_i} : T_i \to \SR(T_i)$ by $F_{T_i}=(\phi_i|_{T_i})^* f_i$. To see that the solutions are invariant under holonomy, we consider a path $\alpha$. If we write $\alpha$ as the concatenation of paths contained in subsequently $U_{k_1},\dots,U_{k_{\ell-1}}$ and $U_{k_\ell}$, we obtain that the induced holonomy by $\alpha$ is $(\phi_{k_\ell}|_{T_{k_\ell}})^{-1} \cdot \rho_{k_\ell k_{\ell-1}} \cdots \rho_{k_2 k_1} \cdot \phi_{k_1}|_{T_{k_1}}$. Using the cocycle condition on the $\rho_{ij}$ and that $(\rho_{ji})_* f_i = f_j$, we obtain that the solutions are holonomy invariant.

    For the other direction, we assume we are given a collection of $n$-dimensional transversals to the foliation $\SF$ and solutions on those transversals. We then find submersions $\phi_i : U_i \to \R^n$ and transition functions $\rho_{ij}$ defining the underlying foliation $\SF$. Let $T_i \subset U_i$ be a transversal such that $\phi(T_i) = \phi(U_i)$ and let $F_{T_i}: T_i \to \SR(T_i)$ be the accompanying solution. We define solutions $f_i : \phi(U_i) \to \Psi(\phi_i(U_i))$ by $f_i = (\phi|_{T_i})_* F_{T_i}$. Since the solutions $f_i$ are invariant under holonomy and the transition functions $\rho_{ij}$ represent the holonomy, we obtain that $(\rho_{ji})_* f_i = f_j$.
\end{proof}
Alternatively, we describe an $\SR$-foliation using a groupoid cocycle, where the relevant groupoid is the following. We recall that the definition of the action groupoid can be found in \cref{ex:actionGroupoid}.
\begin{definition} \label{def:GammaR}
    Let $\Gamma^n$ act on $\EtSol{\R^n}$ along the projection $p_b: \EtSol{\R^n} \to \R^n$ as $[f]_x \cdot [s]_x \mapsto [f_*s]_x$ where $[f]_x \in \Gamma^n$ and $[s]_x \in \EtSol{\R^n}$. The \textbf{classifying groupoid} $\GammaR$ associated to $\SR$ is the action groupoid $\Gamma^n \ltimes \EtSol{\R^n} \rightrightarrows \EtSol{\R^n}$.
\end{definition}

The base is a possibly non-Hausdorff and non-second-countable $n$-manifold, which itself submerses onto $\R^n$. By construction, the base encodes all possible germs of solutions of $\SR$ over $\R^n$ and the arrows encode all the possible symmetries. Note that $\GammaR$ is \'etale and effective.

Hence we obtain the following manner of defining an $\SR$-foliation.  
\begin{lemma}
    Let $M$ be a manifold. There is a one-to-one correspondence between $\SR$-foliations on $M$ and equivalence classes of $\GammaR$-foliations on $M$.
\end{lemma}
\begin{proof} 
    Let an $\SR$-foliation $(\SF,F)$ on a manifold $M$ be given as in \cref{def:RTransStrCocycle}. We then define the $\GammaR$-cocycle $\{\gamma_{ij}: U_i \cap U_j \to \GammaR\}$ by $\gamma_{ij}(x) = ([\rho_{ij}]_{\phi_j(x)} , [f_j]_{\phi_j(x)})$. We see that $\gamma_{ii} = ([\id]_{\phi_i(x)},[f_i]_{\phi_i(x)})$ which is a submersion as map into the base $\EtSol{\R^n}$ of $\GammaR$. 

    Vice versa, we assume we are given a $\GammaR$-cocycle $\{\gamma_{ij}: U_i \cap U_j \to \GammaR\}$ such that $\source\circ \gamma_{ii}$ is submersive. We define the submersions 
    $\phi_i: U_i \to \R^n$ by \[\phi_i = p_b \circ \source \circ \gamma_{ii}. \]
    If the opens $U_i$ are small enough, the map $\pr_1 \circ \gamma_{ij}$ is contained in a basic open of $\Gamma^n$. Hence we can find a map $\rho_{ij}: \phi_j(U_i \cap U_j) \to \phi_i(U_i \cap U_j)$ such that 
    \[ [\rho_{ij}]_x = \pr_1 \circ \gamma_{ij}(x) \in \Gamma^n, \]
    for all $x \in U_i$. 
    Similarly, if the opens $U_i$ are small enough, also the map $\source \circ \gamma_{ii}$ is contained in a basic open of $\EtSol{\R^n}$. Hence we can find a map $f_i: \phi_i(U_i) \rightarrow \Psi(\phi_i(U_i))$ such that 
    \[ [f_i]_x = \source \circ \gamma_{ii}(x) \in \EtSol{\R^n}, \]
        for all $x \in U_i$. 
    We leave it to the reader to extend the above collection to a maximal one, and to then check the conditions of \cref{def:RTransStrCocycle}. 
\end{proof}

We recall from \cref{cor:corGFolSubmPb} that an equivalence class of $\GammaR$-foliations corresponds to an isomorphism class of principal $\GammaR$-bundles with submersive moment map.   
\begin{corollary}
    Let $M$ be a manifold. There is a one-to-one correspondence between $\SR$-foliations on $M$ and isomorphism classes of principal $\GammaR$-bundles over $M$ with submersive moment map.
\end{corollary}

\subsection{Morita equivalence between \texorpdfstring{$\GammaR$}{Γ\_R} and \texorpdfstring{$\GammaR^N$}{Γ\^{}N\_R}} \label{sec:MEGammaR}

In \cref{def:GammaR} we defined the groupoid $\GammaR$ as the action groupoid $\Gamma^n \ltimes \EtSol{\R^n}$. We can however also consider the action groupoid $\GammaRM{N} \coloneqq \GammaM{N} \ltimes \EtSol{N} $ where $N$ is an $n$-manifold. We claim that these two groupoids are Morita equivalent.
\begin{lemma} \label{lem:MEGammaRNn}
    Let $N$ be an $n$-manifold. The groupoids $\GammaRM{N}$ and $\GammaR$ are Morita equivalent.
\end{lemma}
\begin{proof}
    We recall that $\Gamma(N, \R^n)$ is the space of germs of local diffeomorphisms $f:U \to f(U) \subset \R^n$ where $U\subset N$ is an open subset. On one hand, the space $\Gamma(N, \R^n) \times_{N} \EtSol{N} $ is a left principal $\GammaR$-bundle over $\EtSol{N}$ along the moment map 
    \begin{align*}
    \mu: \Gamma(N, \R^n) \times_{N} \EtSol{N} &\to \EtSol{\R^n} \\
    ([\psi]_u,[t]_u) &\mapsto [\psi_* t]_{\psi(u)}.
    \end{align*}
    The action of $\GammaR$ on $\Gamma(N, \R^n) \times_{N} \EtSol{N}$ is given by 
    \begin{align*}
        \GammaR \timeslr{\source}{\mu} \left( \Gamma(N, \R^n) \times_{N} \EtSol{N} \right)   &\to \Gamma(N, \R^n) \times_{N} \EtSol{N} \\
       ([f]_{\psi(u)},[\psi_* t]_{\psi(u)}) \cdot ([\psi]_u , [t]_u)   &\mapsto ([f \circ \psi]_u ,[t]_u ).
    \end{align*} 
    
    On the other hand $\Gamma(N, \R^n) \times_{N} \EtSol{N}$ is a right principal $\GammaRM{N}$-bundle over $\EtSol{\R^n}$ along the moment map 
    \begin{align*}
        \mu_N: \Gamma(N, \R^n) \times_{N} \EtSol{N} &\to \EtSol{N} \\
        ([\psi]_u,[t]_u,) &\mapsto [t]_u.
    \end{align*}
    The action of $\GammaRM{N}$ on $\Gamma(N, \R^n) \times_{N} \EtSol{N}$ is given by 
    \begin{align*}
         \GammaRM{N} \timeslr{\source}{\mu_N} \left( \Gamma(N, \R^n) \times_{N} \EtSol{N} \right)  &\to \Gamma(N, \R^n) \times_{N} \EtSol{N} \\
        ([g]_u, [t]_u, ) \cdot ([\psi]_u, [t]_u)   &\mapsto ([\phi \circ g ]_{h^{-1}(u)},[g^* t]_{g^{-1}(u)} ).
    \end{align*} 
    Hence $\Gamma(N, \R^n) \times_{N} \EtSol{N}$ forms a $\GammaR$-$\GammaRM{N}$-bibundle.
\end{proof}

\subsection{\texorpdfstring{$\SR$}{R}-microbundles}\label{ssec:RMb}
As a combination of \cref{sec:prelim2Haefliger,sec:prelimFolTransvStr}, we now define $\SR$-microbundles. We think of $\SR$-microbundles as the singular analogues of $\SR$-foliations, just as a foliated microbundle is the singular analogue of a foliation.
\begin{definition} \label{def:Rmb}
	An \textbf{$\SR$-microbundle} over an $m$-manifold $M$ is a microbundle of rank $n$ with an $\SR$-foliation of corank $n$ that is transverse to the fibers of $E$. 
\end{definition}

We discuss two examples to see how $\SR$-microbundles naturally appear.

\begin{example} 
Let $f: M \to \Psi$ be a solution of $\SR$. Given any submersion $g: N \to M$, we can foliate $N$ by (connected components of) fibers of $g$ and pullback $f$ to a transverse $\SR$-structure $g^*f$. This endows $N$ with an $\SR$-foliation.

More generally, if $(\SF,F)$ is an $\SR$-foliation on $M$ and $g: N \to M$ is a map transverse to $\SF$, we can pullback both $\SF$ and $F$, yielding another another $\SR$-foliation $(g^*\SF,g^*F)$ on $N$.
\end{example}

\begin{example} \label{ex:SolToBundle} 
An important example, already encountered in the introduction to motivate $\SR$-microbundles, is produced by the exponential map $\exp: TM \rightarrow M$. This map is in general not a submersion due to the presence of the cut locus, but we can restrict $\exp$ to a sufficiently small neighborhood $N \subset TM$ of the zero section. Then any solution $f$ of $\SR$ over $M$ pulls back to an $\SR$-microbundle over $N$ that we call $\exp(f)$.
\end{example}

\subsubsection{\texorpdfstring{$\SR$}{R}-microbundles as principal groupoid bundles}
We want to relate $\SR$-microbundles to principal groupoid bundles, analogously to \cref{sec:CorPbMb} where we discussed how $\SG$-microbundles are related to $\SG$-cocycles. Since the groupoid $\GammaR$ is effective, it follows from \cref{lem:corPGB-Mb} that there is a one-to-one correspondence between $\GammaR$-cocycles and $\GammaR$-microbundles. We recall that the latter is a microbundle whose structure groupoid is $\GammaR$ and whose fibers are therefore canonically diffeomorphic to opens in $\EtSol{N}$. By projecting local diffeomorphisms of $\EtSol{N}$ to local diffeomorphisms of $\R^n$, we claim that the correspondence extends to $\SR$-microbundles. We implement this by adapting the proof of \cref{lem:corPGB-Mb}.

\begin{lemma} \label{lem:corPGRB-Rmb}
    Let $M$ be a manifold. There is a one-to-one correspondence between equivalence classes of $\GammaR$-cocycles and $\SR$-microbundles over $M$.
\end{lemma}
\begin{proof}
    Let $\gamma_{ij}: U_i \cap U_j \to \GammaR$ be a $\GammaR$-cocycle. We recall from the proof of \cref{lem:corPGB-Mb} that we can take representatives to find
    \begin{itemize}
        \item opens $V_i \subset \EtSol{\R^n}$ such that $g_i = \source \circ \gamma_{ii} : U_i \to V_i$, 
        \item diffeomorphisms $g_{ij}: V_j \rightarrow V_i$ with $[g_{ij}]_{\gamma_{jj}(x)} = \gamma_{ij}(x)$ for all $x\in U_j$,
    \end{itemize}
    such that $g_{ij} \circ g_j = g_i$. 

    We assume that the neighborhoods $V_i$ are small enough so that the projection $p: \EtSol{\R^n} \to \R^n$ restricted to each of the $V_i$ is a diffeomorphism. If we define $A_i \coloneqq p(V_i) \subset \R^n$, the $g_{ij}$ lift diffeomorphisms $h_{ij} : A_i \to A_j$. 

    We now construct the microbundle as in the proof of \cref{lem:corPGB-Mb}. That is, we define $E = \sqcup_i (U_i \times A_i) / \sim$. The foliation is then locally given by $U_i \times \{t\}$ for $t \in A_i$. The $\SR$-transverse structure is defined on each transversal $\{x\} \times A_i$ by the pushforward of the tautological solution $\tau$ on $\EtSol{\R^n}$ along $p|_{V_i}$. 
    
    For the other direction, we start with a representative of an $\SR$-microbundle. We let $\{U_i\}_{i \in I}$ be a finite open covering of $M$ such that the underlying foliated microbundle $E \to M$ trivializes over each $U_i$ as in \cref{lem:GMbAtlas}. Hence for each $i \in I$ there exists a diffeomorphism $\phi_i: V_i \to U_i \times \R^n$ where $V_i \subset E$ is an open neighborhood of $\iota(U_i)$ and $\phi$ maps the leaves of the foliation, restricted to $V_i$, to $U_i \times \{*\} $. Since the microbundle is endowed with a foliation, we know that the transition functions $\phi_{ij} : U_i \cap U_j \to \Diff_\loc(\R^n)$ are constant, which allows us to write $\phi_{ij} \in \Diff_\loc(\R^n)$. We already used this in \cref{cor:corGammaNMB-FolMB}. The $\SR$-transverse structure induces local solutions $F_i : \R^n \to \SR$ satisfying $(\phi_{ji})_* [F_i]_{\phi_i (\iota(x))} = [F_j]_{\phi_j (\iota(x))}$ for all $x \in U$. We now define the cocycle $\gamma_{ij} : U_i \cap U_j \to \GammaR$ by $\gamma_{ij}(x)=( [\phi_{ij}]_{\phi_j (\iota(x))} , [F_j]_{\phi_j (\iota(x))} )$. 
\end{proof}

\subsubsection{Space of \texorpdfstring{$\SR$}{R}-microbundles} \label{sec:spaceRMb}
Combining \cref{pro:corCocyclePrBundle,lem:corPGRB-Rmb} shows that we can associate a microbundle to every principal $\GammaR$-bundle. We recall that the smooth version of the Kister-Mazur theorem~\cite{kister1964microbundles,siebenmann1977stable} additionally states that every microbundle is equivalent (as a microbundle) to a vector bundle. Moreover, the vector bundle is unique up to isomorphism (of vector bundles). Hence we can speak of the vector bundle associated to a $\GammaR$-microbundle or principal $\GammaR$-bundle.

We interpret this more functorially as follows: There is a groupoid morphism 
\[ \GammaR \rightarrow \GL(n) \]
given by taking the differential of the germ of a diffeomorphism. By functoriality of the classifying space construction (recall \cref{sec:prelim2classspace}), this yields a map
\[ \eta: B\GammaR \rightarrow B\GL(n) \]
with codomain the classifying space of the underlying vector bundle. For each manifold $M$, this map specializes to
\[ \eta: \Maps(M,B\GammaR) \rightarrow \Maps(M,B\GL(n)). \]
The space on the right has multiple components, each corresponding to a concordance class of $\GL(n)$-bundles over $M$. We recall that concordance and isomorphism classes coincide for vector bundles.

If we are interested in $\SR$-microbundles with a fixed (up to isomorphism) underlying vector bundle $E$, we should take the homotopy fiber of the map $\eta$ over a map $\tau_E: M \to B\GL(n)$ defining the vector bundle $E$. We recall that this is the space consisting of pairs $(f,\gamma)$ with $f$ a map $M \to B\GammaR$, and $\gamma$ a homotopy in $\Maps(M, B\GL(n))$ between $\eta(f)$ and $\tau_E$. We denote the homotopy fiber $\hofib_{\tau_E}(\eta)$ by $\Maps_E(M,B\GammaR)$ and hence think of this space as the ``space of $\SR$-microbundle structures on $E$''. 

To illustrate this, we note that the map $\gamma$ above defines an isomorphism between $E$ and the vector bundle defined by $\eta(f)$, since homotopy classes and isomorphism classes coincide for vector bundles. Hence, an element in $\Maps_E(M,B\GammaR)$ defines an $\SR$-microbundle $(E',\SF,F)$ and an isomorphism $\tilde \gamma: E' \to E$. Using $\tilde \gamma$ we can pullback the $\SR$-foliation $(\SF,F)$ on $E'$ to an $\SR$-foliation on $E$. It follows moreover that: 
\begin{corollary}\label{cor:representabilityMicro}
    Any map $N \rightarrow \Maps_E(M,B\GammaR)$ can be represented by an $\SR$-microbundle on $N \times M$ with underlying vector bundle $\pr_N^*( E)$, where $\pr_N$ is the projection $M \times N \to N$.
\end{corollary}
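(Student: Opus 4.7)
The plan is to chain together the mapping-space adjunction, Haefliger's classification (Proposition \ref{prop:Haefliger}), and the passage from principal $\GammaR$-bundles to microbundles established in Lemmas \ref{lem:representabilityCocycle}, \ref{lem:representativeMicrobundle} and Corollary \ref{cor:smoothingMicro}. Given $\phi : N \to \Maps_E(M,B\GammaR)$, adjunction produces a map $\tilde\phi : N \times M \to B\GammaR$. Because $\Maps_E(M,B\GammaR)$ is defined as the homotopy fibre of $\eta$ over the component $\Maps_E(M,B\GL(n))$, $\phi$ comes additionally equipped with a homotopy $H$ from $\eta\circ\tilde\phi$ to $\chi_E \circ \pi_M$, where $\chi_E : M \to B\GL(n)$ classifies $E$ and $\pi_M : N \times M \to M$ is the projection.

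By Proposition \ref{prop:Haefliger}, the map $\tilde\phi$ represents a concordance class of principal $\GammaR$-bundle $c$ on $N \times M$. Lemmas \ref{lem:representabilityCocycle} and \ref{lem:representativeMicrobundle} produce a $\GammaR$-microbundle $(\tilde E, \tilde\SF, \tilde F)$ extending $c$ along a neighbourhood of the zero section, and Corollary \ref{cor:smoothingMicro} lets us take $(\tilde E, \tilde\SF, \tilde F)$ to be smooth, so that $\tilde E$ is a genuine rank-$n$ smooth vector bundle over $N \times M$. Its isomorphism class is classified precisely by $\eta \circ \tilde\phi$, so the homotopy $H$ together with the covering-homotopy property of $B\GL(n)$ yields a bundle isomorphism $\Phi : \tilde E \xrightarrow{\;\sim\;} N \times E$.

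Transporting $(\tilde\SF, \tilde F)$ along a smooth representative of $\Phi$ defined on a neighbourhood of the zero section (Lemma \ref{lem:representativeMicrobundle} again) yields a smooth $\GammaR$-microbundle $(N \times E, \Phi_*\tilde\SF, \Phi_*\tilde F)$ on $N \times M$ with underlying vector bundle $N \times E$, as required. By construction its classifying map into $B\GammaR$ is homotopic to $\tilde\phi$, so that the associated map $N \to \Maps_E(M,B\GammaR)$ is homotopic to $\phi$.

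The main obstacle lies in the final bookkeeping: one must verify that the homotopy of $B\GammaR$-classifying maps produced by transport along $\Phi$ upgrades to a homotopy of maps into $\Maps_E(M,B\GammaR)$ rather than merely into $\Maps(M,B\GammaR)$; equivalently, that the trivialisation data carried by the homotopy fibre is respected throughout. This is a formal consequence of the naturality of Haefliger's classifying-space construction applied to the groupoid morphism $\GammaR \to \GL(n)$ and the universal property of the homotopy fibre, and needs no further geometric input beyond what has been developed.
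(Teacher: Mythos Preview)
Your proposal is correct and follows essentially the same route the paper intends. The paper gives no argument beyond the sentence ``We can inspect the proof of Lemma \ref{lem:representabilityCocycle} to show'', and your write-up is precisely that inspection carried out in detail: pass from $\phi$ to a principal $\GammaR$-bundle on $N\times M$ via adjunction and Proposition \ref{prop:Haefliger}, build the associated microbundle as in Lemma \ref{lem:representabilityCocycle}, and use the homotopy-fibre datum to identify the normal bundle with $N\times E$.

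One small caveat: your appeal to Corollary \ref{cor:smoothingMicro} tacitly assumes that $N\times M$ is a smooth manifold, which the statement of the corollary does not guarantee (it only says ``any map $N\to\ldots$''). In every application the paper makes of this corollary $N$ is a sphere or a disc, so the assumption is harmless there; but if you want the statement in full generality you should stay with the topological microbundle of Definition \ref{def:microGeneral} and omit the smoothing step, since the identification of the underlying microbundle with $N\times E$ already follows from the classifying homotopy $H$ without passing to the smooth category.
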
 

It is worth pointing out that $\Maps_E(M,B\GammaR)$ is in general not weakly equivalent to the fiber over $\tau_E$ in $\Maps(M,B\GammaR)$. We give an example below. 
\begin{example}
Consider $M = *$ and as transverse relation $\SR$ an orientation, which corresponds to choosing an orientation of the bundle $\R^n \to *$. Fix now a map $\tau: * \to B\GL(n)$. We endow the bundle $E$ on $*$ induced by $\tau$ with a positive or negative orientation, which we denote by $(E,+)$ and $(E,-)$ respectively. Choosing such an orientation corresponds to taking lifts $f_+,f_-: * \to B\GammaR$ of $\tau$. The bundles $(E,+)$ and $(E,-)$ are concordant as ends of the oriented and half-twisted strip $I \times \R^n$, which corresponds to a map $H : I \to B\GammaR$. However, the map $H$ composed with $\eta$ is not constant in $B\GL(n)$. Therefore, $f_+$ and $f_-$ are homotopic as elements of $\Maps_E(M,B\GammaR)$, but not as elements in the fiber of $\eta$ over $E$.
\end{example}

Note that the above example does not contradict \cref{cor:representabilityMicro}, since in the above example $H$ does not factor through $* \to B\Gamma_R$. Maps that \emph{do} factor like this have classically been an object of study.
\begin{remark} \label{rem:ClassFramedPB}
When $M$ is a point, there is a unique vector bundle $E$ over it. In this case, the space $\Maps_E(M,B\GammaR)$ is known as the classifying space $B\overline{\GammaR}$ of \emph{framed} principal $\GammaR$-bundles, because a map $N \to B\overline{\GammaR}$ corresponds to a principal $\GammaR$-bundle over $N$ with trivial underlying bundle. For the groupoid $\Gamma^n$ of diffeomorphisms of $\R^n$, they were studied by Mather~\cite{Mather}, Thurston~\cite{Th3}, and Haefliger~\cite{Haef1}, among others.
\end{remark}

\subsection{Summary} \label{ssec:summaryRBundles}
Analogously to \cref{ssec:summaryBundles}, we now summarize the correspondences between the various constructions involving $\GammaR$. All $\leftrightarrow$ indicate a one-to-one correspondence. This correspondence is up to isomorphism for principal bundles and up to equivalence in the case of cocycles:
\[
    \begin{tikzcd}[column sep={1em},row sep = tiny]
        \{ \GammaR \text{ - cocycles} \} \arrow[r,leftrightarrow] & \{  \text{principal $\GammaR$-bundles} \} \arrow[r,leftrightarrow] & \{ \GammaR \text{-microbundles} \} \arrow[r,leftrightarrow] & \{ \text{$\SR$-microbundles} \}.
    \end{tikzcd}
\]

\subsection{Tautological \texorpdfstring{$\GammaR$}{Γ\_R}-cocycle}
\label{sec:tautBundle}
In \cref{sec:tautsol} we already defined the tautological solution $\tau : \EtSol{N} \to \Psi$, where $N$ is an $n$-dimensional manifold and $\SR$ a differential relation of dimension $n$. Here we now also introduce the tautological $\GammaR$-\emph{cocycle} which is used in some of our upcoming proofs in \cref{sec:wrinklingHaefliger}.

\subsubsection{The tautological \texorpdfstring{$\GammaR$}{Γ\_R}-cocycle} \label{sec:tautGammaRCocycle}
The groupoid $\GammaRM{N}$ (from \cref{sec:MEGammaR}) is a groupoid with base space $\EtSol{N}$ and hence we can consider the unit $\GammaRM{N}$-cocycle over $\EtSol{N}$. Explicitly, this cocycle is given by $[s]_x\mapsto ([\id]_x,[s]_x)$. 

We obtain a \textbf{tautological $\GammaR$-cocycle} $\{\tautcoij\}_{i,j \in I}$ over $\EtSol{N}$ by using an atlas of $N$ to make the $\GammaR^N$-cocycle into a $\GammaR$-cocycle. That is, we fix an atlas $\{\phi_i: U_i \to \R^n\}_{i \in I}$ of $N$ with transition functions $\phi_{ij} = \phi_i \circ \phi_j^{-1}:\phi_j(U_i \cap U_j) \to \phi_i(U_i \cap U_j)$ and we define the opens $V_i = (p_b)^{-1}(U_i) \subset \EtSol{N}$. Then the cocycle $\{\tautcoij: V_i \cap V_j \to \GammaR \}_{i,j \in I}$ on $\EtSol{N}$ is defined by
\begin{align*}
    \tautcoij([s]_x) = \left( \left[ \phi_{ij} \right]_{\phi_j(x)}, \left( \phi_j \right)_* \left( \left[ s \right]_{x} \right) \right) 
\end{align*}
for all $[s]_x \in V_i \cap V_j$. Had we fixed a different atlas of $N$, we would have obtained a different, but equivalent cocycle. Hence without fixing an atlas of $N$, it is only the equivalence class of tautological $\GammaR$-cocycles that is defined. 

\begin{remark}
    Even though we have defined the tautological $\GammaR$-cocycles over $\EtSol{N}$, we do not mention a tautological $\GammaR$-microbundle or tautological principal $\GammaR$-bundle. The reason for this lies in the fact that the space $\EtSol{N}$ is neither Hausdorff, nor second-countable and that hence \cref{pro:corCocyclePrBundle,lem:corPGRB-Rmb} do not apply. In particular, we cannot use \cref{pro:MePrBundlesSame} and the Morita equivalence from \cref{lem:MEGammaRNn} to convert the unit principal $\GammaR^N$-bundle over $\SR(N)$ to a principal $\GammaR$-bundle over $\SR(N)$. However, for our purposes of \cref{sec:wrinklingHaefliger}, the equivalence class of tautological $\GammaR$-cocycles suffices.
\end{remark}

\subsubsection{Pullbacks of the tautological \texorpdfstring{$\GammaR$}{Γ\_R}-cocycle} \label{ssec:PullbackTautCocycle}
In \cref{ex:SolToBundle} we discussed how to obtain the $\SR$-microbundle $\exp(f)$ from a solution $f: N \to \SR(N)$  by pulling back the solution using the exponential map. There exists also an alternative way of obtaining this microbundle using the tautological $\GammaR$-cocycle over $\EtSol{N}$. This claim holds only up to equivalence, since the tangent bundle is, as microbundle, only defined up to equivalence.
\begin{corollary} \label{cor:SolToBundleTaut}  
    Let $f: N \to \Psi(N)$ be a solution and define the map $F : N \to \EtSol{N}$ by $x \mapsto [f]_x$. The $\SR$-microbundle corresponding to $F^*\{\tautcoij\}$ is equivalent to the $\SR$-microbundle $\exp(f)$.
\end{corollary}
\begin{proof}   
    When we pullback the tautological $\GammaR$-cocycle $ \{\tautcoij\} $ using $F$ we obtain the $\GammaR$-cocycle $\{ \delta_{ij} : U_i \cap U_j \to \GammaR \}$ over $N$ defined by 
    \begin{align*}
        \delta_{ij}(x) = \left( \left[ \phi_{ij} \right]_{\phi_j(x)}, \left( \phi_j \right)_* \left( \left[ f \right]_{x} \right) \right). 
    \end{align*}
    Hence the underlying $\Gamma$-cocycle sends $x \in N$ to $\left[ \phi_{ij} \right]_{\phi_j(x)}$, which defines the tangent bundle foliated by the fibers of the exponential map as microbundle (recall \cref{ex:MBtangentbundle}). Since the solutions on the fibers of the bundles agree, the $\SR$-microbundles are equivalent.
\end{proof}

In fact, we can also use other maps into $\EtSol{N}$ to pull back the tautological $\GammaR$-cocycle, resulting in an $\SR$-microbundle with underlying bundle $TN$.
\begin{corollary} \label{cor:SectionToBundleTaut}
    Let $G: N \to \EtSol{N}$ be a map such that $p_b \circ G: N \to N$ is homotopic to the identity. The $\SR$-microbundle corresponding to $G^*\{\tautcoij\}$ is an $\SR$-microbundle, whose underlying microbundle is equivalent to $TN$.
\end{corollary}
\begin{proof}
    When we pullback the tautological $\GammaR$-cocycle using $G$ we obtain the $\GammaR$-cocycle $\{ \delta_{ij} : U_i \cap U_j \to \GammaR \}$ over $N$ defined by 
    \begin{align*}
        \delta_{ij}(x) = \left(  \left[ \phi_{ij} \right]_{\phi_j(p_b \circ G(x))} , \left( \phi_j \right)_* \left( G\left(x\right) \right) \right). 
    \end{align*}
    Hence the underlying $\Gamma$-cocycle sends $x \in N$ to $\left[ \phi_{ij} \right]_{\phi_j(p_b \circ G(x))}$, which defines the pullback of $TN$ via $p_b\circ G$. Since $p_b \circ G$ is homotopic to the identity, it follows that the microbundle is equivalent to $TN$.
\end{proof}

\section{The formal analogue} \label{ssec:formalAnalogue}
We now introduce the formal counterparts of principal $\GammaR$-bundles. The reader should keep in mind that this passage is analogous to the passage from solutions of $\SR$ to formal solutions. In \cref{sec:formalGroupoid} we introduce the underlying groupoid and in \cref{sec:formalMb} the formal $\SR$-microbundles. This leads us to define the scanning map in \cref{ssec:scanningMap}, which we use to compare the genuine and formal versions of the discussed structures. We end in \cref{sec:tautBundleF} with the formal analogues of the tautological bundles from \cref{sec:tautBundle}.

\subsection{Groupoid of formal solutions} \label{sec:formalGroupoid}
We start by defining the formal analogue of $\GammaR$, where we consider formal solutions instead of germs of solutions. These formal solutions are acted upon by $J^\aorder \Gamma$ where $\aorder$ is the action order of the relation $\SR$. We recall from \cref{ssec:natural} that this is the minimal number $\aorder$ such that $J^\aorder\Gamma$ acts on $\SR$.  

\begin{definition} \label{def:GammaRf}
    The \textbf{formal classifying groupoid} $\GammaR^f$ associated to $\SR$ is the action groupoid $ J^\aorder \Gamma^n \ltimes \SR(\R^n)  \rightrightarrows \SR(\R^n)$.
\end{definition}

We observe that $\GammaR^f$ is a Lie groupoid that is finite dimensional, Hausdorff and second-countable, but not \'etale. Additionally we note that elements of $\GammaR^f$ are of the form $(j^\aorder_x(f),j^r_x (s)) $ with $j^\aorder_x(f) \in J^\aorder \Gamma^n$ and $j^r_x (s) \in \SR(\R^n)$.

\label{sec:MEGammaRf}
Similar to its genuine counterpart, we can also consider the action groupoid $\GammaR^{f,N} \coloneqq J^\aorder\Gamma^N \ltimes \SR(N)$, where $N$ is an $n$-manifold. We claim that also these two groupoids are Morita equivalent.
\begin{lemma} \label{lem:MEGammaRf}
    Let $N$ be an $n$-manifold. The groupoids $\GammaR^{f,N}$ and $\GammaR^f$ are Morita equivalent.
\end{lemma}
\begin{proof}
    We proceed as in the proof of \cref{lem:MEGammaRNn}. We claim that $J^\aorder\Gamma(N, \R^n) \times_{N} \SR(N)$ is a left principal $\GammaR^f$-bundle over $\SR(N)$ and a right principal $\GammaR^{f,N}$-bundle over $\SR(\R^n)$. The moment maps and the actions are defined analogous to the proof of \cref{lem:MEGammaRNn}. This makes $J^\aorder\Gamma(N, \R^n) \times_{N} \SR(N)$ into a $\GammaR^f$-$\GammaR^{f,N}$-bibundle.
\end{proof}

\subsection{Formal \texorpdfstring{$\SR$}{R}-microbundles}\label{sec:formalMb}
In the formal case, there is still a correspondence between $\GammaR^f$-cocycles and principal $\GammaR^f$-bundles. However, the definition of a $\SG$-microbundle does not make sense for $\SG = \GammaR^f$, because the groupoid is not \'etale (and hence also not effective). Therefore, we directly define the following formal analogue of an $\SR$-microbundle. 

\begin{definition}
	A \textbf{formal $\SR$-microbundle} $(E,F)$ is a microbundle $E$ with a family $F_x : \iota(x) \to \SR(E_x)$ of formal solutions varying continuously in $x\in M$ for the Whitney topology.
\end{definition}

An important difference between formal and genuine $\SR$-microbundles is that a formal $\SR$-microbundle does not come with a canonical foliation. On the other hand, each genuine $\SR$-microbundle canonically defines a formal $\SR$-microbundle. Hence we obtain the following:
\begin{lemma}
A genuine $\SR$-microbundle is a formal $\SR$-microbundle. 
\end{lemma}
\begin{proof}
    A genuine $\SR$-microbundle consists of a microbundle $E\to M$ with an $\SR$-foliation that is transverse to the fibers of $E$. Each fiber $E_x$ is therefore a transversal of the foliation and so we obtain using \cref{def:RTransStrTransversal} a family of solutions $f_x : E_x \to \SR$. By taking fiberwise jets along $\iota(M)$, we obtain the family of formal solutions $F_x : \iota(x) \to \SR(E_x)$ that varies continuously for the Whitney topology in $x \in M$.
\end{proof}

A formal solution $F$ on a manifold $M$ defines a formal $\SR$-microbundle $\exp(F)$ analogous to how a solution $f$ defines an $\SR$-microbundle (as discussed in \cref{ex:SolToBundle}):
\begin{example} \label{ex:SolToBundleFormal}
     Let $F$ be a formal solution on a manifold $M$. We use the exponential $\exp: TM \to M$ to pull back $F$ to $TM$ to obtain a family of formal solutions $\exp^*(F|_{\Op(x)}) : T_xM \to \SR$. This defines a formal $\SR$-microbundle which we denote by $\exp(F)$. We note that, even though in general this does not need to be the case, the formal $\SR$-microbundle $\exp(F)$ does come with a canonical foliation by the fibers of $\exp$.
\end{example}

We now prove the correspondence between $\GammaR^f$-cocycles and formal $\SR$-microbundles, the formal analogue of \cref{lem:corPGRB-Rmb}. Most of the work has already been carried out in \cref{sec:JrGammaToMb}, where we show that any principal $J^\aorder \Gamma$-bundle lifts to a microbundle. Hence below we only need to endow this microbundle with a family of formal solutions. 
Since the lift to a microbundle is only defined up to concordance, the correspondence below is as well.

\begin{lemma} \label{lem:corPGRB-fRmb}
	There is a one-to-one correspondence between concordance classes of $\GammaR^f$-cocycles and concordance classes of formal $\SR$-microbundles.
\end{lemma}
\begin{proof}	
    We define the maps $\pr_1: \GammaR^f \to J^\aorder \Gamma^n$ and $\pr_2: \GammaR^f \to \SR(\R^n)$. Let $\{\gamma_{ij}: U_i \cap U_j \to \GammaR^f\}$ be a cocycle representing a given principal $\GammaR^f$-bundle. We recall from \cref{lem:JrGPbToMb} that the $J^\aorder\Gamma$-cocycle $\pr_1 \circ \gamma_{ij}$ lifts to a smooth microbundle $E \to M$, which is unique up to concordance. The inclusion $\iota: M \to E$ is, locally over each $U_i$, given by $m \mapsto (m,\gamma_{ii}(m))$. Hence we define the family of formal solutions $F_x: \iota(x)\to \SR(E_x)$ for $x\in U_i$ as $x \mapsto \pr_2 \circ \gamma_{ii}(x)$. Since the $\gamma_{ij}$ form a cocycle, the families of formal solutions over each $U_i$ glue to a family over $M$.

    The other direction is analogous to the proof of \cref{lem:corPGRB-Rmb} with the adaptation that the transition functions are no longer constant and we have to take jets of the resulting $\Gamma^\Whitney$-cocycle.
\end{proof}

\subsection{The scanning map} \label{ssec:scanningMap}

Following the h-principle philosophy, we are interested in studying the \textbf{scanning map} 
\begin{equation*} \label{eq:scanningBGamma}
\scanR: \GammaR \rightarrow \GammaR^f,
\end{equation*}
which generalizes the usual inclusion of solutions into formal solutions. This map is defined by taking the $r$-jet of the solution. That is, given an element $([f]_x,[s]_x) \in \GammaR$ we map this to $(j^\aorder_x(f),j^r_x(s)) \in J^\aorder\Gamma^n \ltimes \SR(\R^n)$.

Due to the functoriality of the classifying space construction we also have a scanning map
\[ B\GammaR \rightarrow B\GammaR^f, \]
which is unique up to homotopy and which we still denote by $\scanR$ if needed. Similarly, we obtain classifying maps for the spaces of principal bundles and microbundles over a given manifold $M$:
\begin{equation} \label{eq:scanningPrincipal}
\Maps(M,B\GammaR) \rightarrow \Maps(M,B\GammaR^f),
\end{equation}
\begin{equation} \label{eq:scanningMicro}
\Maps_E(M,B\GammaR) \rightarrow \Maps_E(M,B\GammaR^f).
\end{equation}

Let us elaborate on the last item. We first recall from \cref{sec:spaceRMb} that $\Maps_E(M,B\GammaR)$ denotes the space of $\SR$-microbundles with underlying vector bundle $E \to M$. Observe that $\GammaR \rightarrow \GammaR^f$ factors the map $\GammaR \rightarrow \GL(n)$. Differently put, the associated microbundle (without $\SR$-foliation) depends only on first order formal data. This means that the map appearing in \cref{eq:scanningMicro} is the induced map on homotopy fibers of the map in \cref{eq:scanningPrincipal} over $\Maps(M,B\GL(n))$. Hence, in order to understand the connectivity of \cref{eq:scanningPrincipal} we just need to understand the connectivity of \cref{eq:scanningMicro}, or vice versa.

\subsection{Tautological \texorpdfstring{$\GammaR^f$}{Γ\^{}f\_R}-bundles}
\label{sec:tautBundleF}
In \cref{sec:tautBundle} we constructed the tautological $\GammaR$-cocycle over $\EtSol{N}$. We now proceed similarly for $\GammaR^f$ over $\SR(N)$. The main difference is however that $\SR(N)$ is a (Hausdorff and second-countable) manifold, implying that in this setting there is a correspondence between $\GammaR^f$-cocycles, principal $\GammaR^f$-bundles and formal $\SR$-microbundles over $\SR(N)$.

\subsubsection{The tautological principal \texorpdfstring{$\GammaR^f$}{Γ\^{}f\_R}-bundle}
The groupoid $\GammaR^{f,N}$ (from \cref{sec:MEGammaRf}) is a groupoid with base space $\SR(N)$. Hence we can consider the unit principal $\GammaR^{f,N}$-bundle over $\SR(N)$.
By the Morita equivalence of $\GammaR^{f,N}$ and $\GammaR^{f}$, we obtain as in \cref{pro:MePrBundlesSame} a $\GammaR^{f}$-bundle on $\SR(N)$. Since we started with a unit bundle, it is the bibundle $J^\aorder\Gamma(N, \R^n) \times_{N} \SR(N)$ from \cref{lem:MEGammaRf} itself that forms the $\GammaR^f$-bundle over $\SR(N)$. 

\begin{definition}\label{def:tautF}
    The \textbf{tautological principal $\GammaR^f$-bundle} $\tautPbF{N}$ over $\SR(N)$ is the principal $\GammaR^f$-bundle $J^\aorder\Gamma(N, \R^n) \times_{N} \SR(N) \to \SR(N)$. 
    We refer to the corresponding equivalence class of $\GammaR^f$-cocycles (as in \cref{pro:corCocyclePrBundle}) as the equivalence class of \textbf{tautological $\GammaR^f$-cocycles} and to the corresponding concordance class of formal $\SR$-microbundles (as in \cref{lem:corPGRB-fRmb}) as the concordance class of \textbf{tautological formal $\SR$-microbundles}. 
\end{definition}
When we fix a representative tautological formal $\SR$-microbundle, we refer to it as \emph{a} tautological formal $\SR$-microbundle and denote it by $\tautMbF{N} \to \SR(N)$.

\subsubsection{Tautological \texorpdfstring{$\GammaR^f$}{Γ\^{}f\_R}-cocycles and formal \texorpdfstring{$\SR$}{R}-microbundles}
We now provide a more explicit description of a tautological $\GammaR^f$-cocycle. To this end, we fix an atlas $\{\phi_i: U_i \to \R^n\}_{i \in I}$ of $N$ with transition functions $\phi_{ij} = \phi_i \circ \phi_j^{-1}:\phi_j(U_i \cap U_j) \to \phi_i(U_i \cap U_j)$, and define the opens $V_i = (p_b)^{-1}(U_i) \subset \SR(N)$.
\begin{lemma}
    A tautological $\GammaR^f$-cocycle $\{\tautcofij : V_i \cap V_j \to \GammaR^f \}_{i,j \in I}$ on $\SR(N)$ is defined by
\begin{align*}
    \tautcofij(j^r_x s) &= \left( j^\aorder_{\phi_j(x)} \phi_{ij}, \left( \phi_j \right)_* \left( j^r_x s \right) \right) 
\end{align*}
for all $j^r_x s \in \SR(N)$.
\end{lemma}
We recall from \cref{sec:JrGammaToMb} that a $J^\aorder\Gamma$-cocycle does not define a microbundle, but it may lift to a $\Gamma^\Whitney$-cocycles that does. We define the \textbf{tautological lift} of the tautological $\GammaR^f$-cocycle as the cocycle $\{ \tautcofliftij :  V_i \cap V_j \to \Gamma^\Whitney \ltimes \SR(\R^n) \}_{i,j \in I}$ defined by 
    \begin{align*}
        \tautcofliftij(j^r_x s) = \left(   [\phi_{ij}]_{\phi_j(x)} , \left( \phi_j \right)_* \left( j^r_x s \right)\right),
    \end{align*}
which indeed defines a microbundle by \cref{lem:PbToMbWhitney}. In fact, when inspecting the proof of \cref{lem:corPGRB-fRmb} we obtain the following.
\begin{lemma}
    A representative of a tautological formal $\SR$-microbundle is given by
    \begin{itemize}
        \item the vector bundle $p_b^* (TN)$, where $p_b$ is the projection $\SR \to N$,
        \item the inclusion $\iota: \SR(N) \to \tautMbF{N}$ given by the zero section, and
        \item the family of formal solutions $F_{j^r _x s}:\iota(j^r _x s) \to \SR(\Op(x))$ defined by $\iota(j^r _x s) \mapsto j^r _x s$.
    \end{itemize}
\end{lemma}

\subsubsection{Pullbacks of formal tautological bundles} \label{ssec:PullbackTautFormalCocycle}

The observations we made in \cref{ssec:PullbackTautCocycle} for solutions also hold for \emph{formal} solutions $F$. Hence we obtain the formal counterparts of \cref{cor:SolToBundleTaut,cor:SectionToBundleTaut}.

\begin{corollary} \label{cor:FormalSolToBundleTaut}
    Let $F: N \to \SR(N)$ be a formal solution. The formal $\SR$-microbundle $F^*( \tautMbF{N})$ is equivalent to the formal $\SR$-microbundle $\exp(F)$.
\end{corollary}
\begin{proof}       
    When we pullback the tautological lift of the tautological $\GammaR^f$-cocycle using $F$, we obtain the cocycle $\{ \delta_{ij} : U_i \cap U_j \to \Gamma^\Whitney \ltimes \SR(\R^n) \}$ defined by 
    \begin{align*}
        \delta_{ij}(x) = \left( [\phi_{ij}]_{\phi_j(x)},  \left( \phi_j \right)_* \left( F(x) \right)  \right). 
    \end{align*}
    Hence the underlying $\Gamma$-cocycle sends $x \in N$ to $\left[ \phi_{ij} \right]_{\phi_j(x)}$, which defines the tangent bundle as microbundle (recall \cref{ex:MBtangentbundle}). Since the families of formal solutions along the zero section agree, the formal $\SR$-microbundles are equivalent. 
\end{proof}

The proof of \cref{cor:FormalSectionToBundleTaut} is analogous to the proof of \cref{cor:SectionToBundleTaut} with the adaptation of using the tautological lift as we did in the proof of \cref{cor:FormalSolToBundleTaut}.
\begin{corollary} \label{cor:FormalSectionToBundleTaut}
    Let $G: N \to \SR(N)$ be a map such that $p_b\circ G:N\to N$ is homotopic to the identity. The formal $\SR$-microbundle $G^* (\tautMbF{N})$ has an underlying microbundle that is equivalent to $TN$.
\end{corollary}

\section{Wrinkling h-principles for \texorpdfstring{$\SR$}{R}-microbundles} \label{sec:wrinklingHaefliger}

We now prove the results stated in the introduction regarding $\SR$-microbundles, as well as their consequences regarding the space of principal $\GammaR$-bundles and the connectivity of the classifying space $B\GammaR$. Throughout this section, $\SR \subset J^r\Psi$ is an open and $\Diff$-invariant relation of dimension $n$.

\subsection{From formal sections to \texorpdfstring{$\SR$}{R}-microbundles}

In the case that $\dim(M)=n$, we recall from \cref{ex:SolToBundleFormal} that any formal solution $F: M \to \SR$ defines a formal $\SR$-microbundle $\exp(F)$. \Cref{thm:wrinklingMicro} states that every such bundle can be homotoped to produce an $\SR$-microbundle. The resulting $\SR$-microbundle is the pullback of the tautological $\SR$-microbundle on $\EtSol{M}$, via the map produced by \cref{thm:wrinklingEtale}. An illustration of the proof can be found in \cref{fig:Haefliger}.

\wrinklingMicro

\begin{figure}[h]
    \centering
    \includegraphics[width=0.9\textwidth]{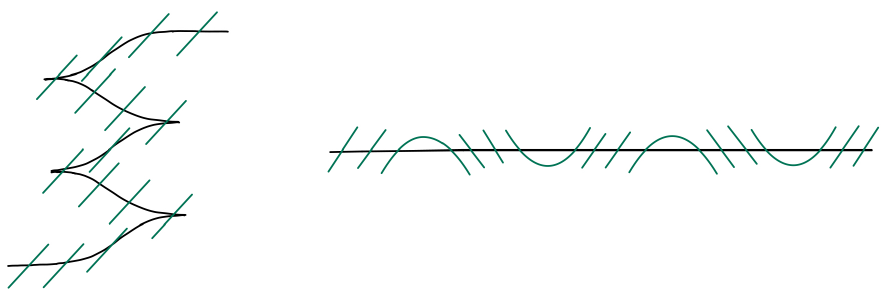} 
    \caption{A depiction of the proof of \cref{thm:wrinklingMicro}. On the left, we see the \'etale space $\EtSol{M}$ lying over $M$. The zig-zag shown is the map $G$ produced by \cref{thm:wrinklingEtale}. When restricted to the image of $G$, the tautological $\GammaR$-cocycle over $\EtSol{M}$ defines a tautological $\SR$-microbundle, which we imagine as a pullback of $TM$ foliated by the fibers of the exponential map. These fibers are drawn as the little diagonal segments in turquoise. We pullback this $\SR$-microbundle to $M$ itself, via the map $G$. Since $G$ is wrinkled, the pullback is not regular but instead exhibits wrinkled singularities, as depicted on the right. } \label{fig:Haefliger}
\end{figure}

\begin{proof}
    According to \cref{cor:wrinklingEtale}, there exists a homotopy of maps $H_t : M \to \SR$, a homotopy $\psi_t : M \to M$ and a wrinkled submersion $G: M \rightarrow \EtSol{M}$ such that $H_0 = F$, $\psi_0$ is the identity, $\psi_1$ is a wrinkled submersion, and the following diagram commutes
    \[
        \begin{tikzcd}
            & & \EtSol{M} \arrow[d,"p_r"] \\
            & & \SR \arrow[d,"p_b"] \\
            M \times \{1\} \arrow[r,hookrightarrow] \arrow[uurr,"G"] & M \times [0,1] \arrow[r,"\psi"] \arrow[ur,"H"] & M.
        \end{tikzcd}
    \]

    We now pullback the tautological formal $\SR$-microbundle on $\SR$ via $H$. This yields a formal $\SR$-microbundle on $M \times [0,1]$ starting at $\exp(F)$, as observed in \cref{cor:FormalSolToBundleTaut}. Since $G$ lifts $H_1$, the formal $\SR$-microbundle on $M \times \{1\}$ lifts to a genuine $\SR$-microbundle. The underlying microbundle is equivalent to $TM$ by \cref{cor:SectionToBundleTaut}. 
\end{proof}

Its parametric version, where we start with a family of formal solutions, follows by applying the arguments in the proof of \cref{thm:wrinklingMicro} parametrically. We recall that a $K$-family of microbundles $(E,\SF_k,F_k)$ is continuous for the Whitney topology, if the foliations $\SF_k$ form a distribution on the microbundle $E \times K \to M \times K$ and the solutions $F_k$ vary continuously for the Whitney topology. 

\wrinklingMicroParametric
\begin{proof}
    Since this theorem is the parametric and relative version of \cref{thm:wrinklingMicro}, its proof amounts to repeating the argument in the proof of \cref{thm:wrinklingMicro}, invoking instead of \cref{cor:wrinklingEtale}, its parametric and relative version \cref{cor:wrinklingEtaleParametric}. There is nonetheless one key observation to be made:

    \Cref{cor:wrinklingEtaleParametric} produces families in the \'etale space of solutions that are individually continuous for the \'etale topology and Whitney continuous in the parameter. This means that whenever we pullback the universal solution in $\EtSol{M}$, we obtain families of principal $\GammaR$-bundles that are continuous for the Whitney topology, but not necessarily for the \'etale one. This is however precisely what the statement claims.
\end{proof}

\subsection{Existence of \texorpdfstring{$\SR$}{R}-microbundles}
In this section we generalize \cref{thm:wrinklingMicro} by starting with a formal $\SR$-microbundle, instead of a formal solution. In particular we also allow for the relation $\SR\subset J^r\Psi$ to be of a different dimension $n$ than the dimension $m$ of the manifold $M$.

We recall that a formal $\SR$-microbundle $(E,F)$ does not come with a foliation $\SF$. However, due to work of Haefliger \cite{Haef2} on the connectivity of the map $B\Gamma \to B \GL(n)$, we know that the formal $\SR$-microbundle $(E,F)$ admits a foliation $\SF$ transverse to the fibers of $E$ with $\corank(\SF)=\rank(E)$, as long as $\dim(M) \leq n$. Compared to the proof of \cref{thm:wrinklingMicro}, the difference now is that $F$ does not need to be invariant under the holonomy of $\SF$, and that $(E \to M,\SF)$ need not be regular. It was, however, transversality with respect to the zero section that allowed us to apply our wrinkling from \cref{thm:wrinklingEtale}. Hence the first step of the proof below will be to achieve transversality. After that we argue again per simplex, using \cref{thm:wrinklingEtale} for the top dimensional simplices.

\HaefligerNonTangential
\begin{proof}
Let $(E,F)$ be a formal $\SR$-microbundle over $M$ and let $\SF$ be a foliation of $E$ transverse to its fibers with $\corank(\SF) =  \rank(E)$ \cite{Haef2}. We extend the family of formal solutions to a smooth family of formal solutions defined on the fibers $E_x$, and write $F_x:E_x \to \SR$. Using an improved version of Thurston's jiggling~\cite{FPTjiggling} we obtain a piecewise smooth section $\tilde{\iota}: M \rightarrow E$ with respect to a triangulation of $M$ such that $\tilde \iota$ is piecewise transverse to both $\SF$ and the fibers of $E$. Since one step for jiggling is to subdivide the simplices such that their diameter decreases uniformly, we can assume that the simplices are arbitrarily small. We then proceed as in the proof of \cref{prop:connectivityEtale}, arguing inductively on the dimension of the simplices:

\begin{figure}
    \centering
    \includegraphics[width=0.7\linewidth,page=6]{Fig_Haefliger.pdf}
    \caption{A sketch of the proof of \cref{prop:HaefligerNonTangential}. The green regions indicate where the bundle already lifts to an $\SR$-microbundle, and where $G$ is hence already holonomic.}
        \label{fig:existenceRmb}
\end{figure}

We first assume that we are given a simplex $\sigma \subset M$ of dimension strictly smaller than $n$, and that $\pi|_{\pi^{-1}(\sigma)}:E|_{\sigma}\to \sigma$ already lifts to an $\SR$-microbundle over a neighborhood $V \subset \sigma$ of $\partial \sigma$. We have illustrated this, and the upcoming part of the proof in \cref{fig:existenceRmb}. We thicken the image $\tilde\iota(\sigma)$ to a small open neighborhood $\Sigma$ in $\pi^{-1}(\sigma) \subset E$. We can assume that $\Sigma$ is contained in a foliation chart $\phi:\Op(\Sigma) \to \sigma \times \R^n$. Since $E$ already lifts to a microbundle over $V$ we know that the family $F_x : \Sigma \cap E_x \to \SR$, when restricted to $x\in V$, consists of holonomic sections and is invariant under holonomy. Hence the $F_x$ agree on the fibers of $\phi$. Because we have assumed $\sigma$ to be small, we can also find a formal solution $G: \phi(\Sigma) \to \SR$ such that
\begin{enumerate}[label=(\arabic*)]
    \item $G (\phi(x,y)) = \phi_* F_x(y)$ for all $(x,y) \in \iota(\sigma)$,
    \item \label{it:hol} $G|_{\phi(E_x)} = \phi_* F_x$ for $x\in V$, and
    \item $(\phi|_{\Sigma\cap E_x})^*G$ and $F_x$ are homotopic as a $\sigma$-family of formal solutions relative to $V$.
\end{enumerate}
We note that \cref{it:hol} implies in particular that $G$ is holonomic over $\phi(\Sigma \cap \pi^{-1}(V))$. Since $\SR$ is open and $\Diff$-invariant, and $\phi(\tilde\iota \sigma)$ has positive codimension, we know the $h$-principle holds \cite[Theorem 8.3.1]{CiElMi} so that we obtain a section $\tilde G : \phi(\Sigma) \to \SR$ such that $G$ and $\tilde G$ are homotopic relative to $\phi(\Sigma \cap \pi^{-1}(V))$, and such that $G$ is holonomic over a neighborhood $\Sigma'$ of $\phi(\tilde \iota(\sigma))$.

We now pullback $\tilde G$ to $\tilde \iota(\sigma)$ by defining $ \tilde F_x : \Sigma'\cap E_x \to \SR$ for $x\in \sigma $ as $\tilde F_x = (\phi|_{\Sigma'\cap E_x})^* G$. We define solutions $ f_x: E_x \to \Psi$, by requiring that $j^r f_x (y)=\tilde F_x(y) $ for $(x,y) \in \Sigma'$ and that the $f_x$ are constant on the fibers of $\phi$, implying that they are holonomy invariant. This defines an $\SR$-microbundle over $\sigma$, homotopic to the formal $\SR$-bundle we started with, relative to $\partial \sigma$. 

In the case that $\dim(M)<n$, we can reason as above for all simplices in $M$. If instead $\dim (M) =n$, we argue for a top-dimensional simplex $\sigma$ as follows. We again find a formal section $G:\phi(\Sigma) \to J^r(\Psi)$ as above, but now apply \cref{thm:wrinklingEtale} to it. This results in a function $\tilde G: \phi(\Sigma) \to \EtSol{\R^n}$ together with a homotopy $H_t : \phi(\Sigma) \to \SR$ of formal solutions relative to $\cup_{x \in V} \phi(\Sigma \cap E_x)$, starting at $G$ and ending at $p_r \circ \tilde G : \phi (\Sigma) \to \SR$.

We now pullback the tautological formal $\SR$-microbundle on $\SR$ by $H_t$, to obtain a formal $\SR$-microbundle over $M \times I$. Since $H_1$ lifts to $\tilde G$, the formal $\SR$-microbundle over $M\times\{1\}$ lifts to a genuine $\SR$-microbundle. Hence the microbundle over $M \times I$ defines a concordance (and hence homotopy) between the given formal $\SR$-microbundle over $M \times \{0\}$ and a genuine $\SR$-microbundle over $M\times \{1\}$, relative to $\partial(\sigma)$.

To conclude the proof, we note that we have defined a $\sigma$-family of solutions on $\Sigma$ that are invariant under the holonomy of $\SF$. Hence we smoothen $\tilde{\iota}|_\sigma$ to $\hat\iota:\sigma \to E|_\sigma$ such that $\hat{\iota}(\sigma) \subset \Sigma$. This defines an $\SR$-microbundle over the top-dimensional simplex $\sigma$ homotopic to the given formal $\SR$-microbundle. The smoothening will in general introduce singularities of the foliation with respect to $M$.
\end{proof}

Its Whitney parametric and relative version again follows by making slight changes to the proof:
\HaefligerParametricNonTangential
\begin{proof}
    We first deal with the relative case in just the domain. For this we just repeat the proof of \cref{prop:HaefligerNonTangential} but relative to a neighborhood $V$ of $M'$. That is, we triangulate relative to $V$, jiggle relative to $V$ and subsequently apply holonomic approximation and \cref{thm:wrinklingMicroParametric} relative to $V$.
    
    The parametric case we prove this analogous to \cref{prop:HaefligerNonTangential} by introducing parameters. The one subtlety is that we now additionally want triangulate the product $M \times K$ in a manner that is transverse to the fibers of $M \times K \rightarrow K$, which can also be achieved by an improved version of jiggling~\cite{FPTjiggling}. This implies that holonomic approximation can be applied parametrically in all smaller dimensional cells. \Cref{thm:wrinklingMicroParametric} is invoked to deal with the top-dimensional cells.
\end{proof}

\subsection{The h-principle for principal \texorpdfstring{$\GammaR$}{Γ\_R}-bundles with the \'etale topology}
The previous results allow us to study the connectivity of the map $B\GammaR \to B \GammaR^f$. More generally we can study the connectivity of the scanning map 
\begin{align*}
    \Maps(M, B\GammaR) &\to \Maps(M, B\GammaR^f)
\end{align*}
which was introduced in \cref{ssec:scanningMap}.

\label{sec:hprincipleMBEtale}
Since a $K$-family of maps $M \to B\GammaR$, that is continuous for the the \'etale topology, corresponds to a principal $\GammaR$-bundle over $M \times K$, the following follows almost directly from \cref{prop:HaefligerNonTangential}. 

\cocycleConnectivityEtale
\begin{proof}
	We first prove surjectivity in the $i$th homotopy group, with $i \leq n-m$. Represent a given homotopy class in $\Maps(M,B \GammaR^f)$ by a map $S^i \to \Maps(M,B \GammaR^f)$. The map corresponds to an element of $\Maps(S^i \times M, B\GammaR^f)$ and hence to a formal $\SR$-microbundle on $S^i \times M$. We then apply \cref{prop:HaefligerNonTangential} to find a homotopy to an $\SR$-microbundle over $S^i \times M$. This now corresponds to an element of $\Maps( S^i \times M, B\GammaR^f)$ and hence to a map $S^i \to \Maps(M,B \GammaR^f)$.
	
	The proof of injectivity for $i < n$ is similar. We take a $D^{i+1}$-family of maps $M \to B\GammaR^f$, that restricts to the boundary as a $S^i$-family of $M \to B\GammaR$-bundles. We now argue as above, but relative to $S^i \times M$, using again \cref{prop:HaefligerNonTangential}. 
\end{proof}

Together with the observation in \cref{ssec:scanningMap}, this is equivalent to:
\begin{corollary}
	Let $M$ be an $m$-dimensional manifold and let $\SR$ be an open and $\Diff$-invariant relation of dimension $n \geq m$. Let $E \rightarrow M$ be a vector bundle. The scanning map 
	\[ \Maps_E(M,B\GammaR) \rightarrow \Maps_E(M,B\GammaR^f) \]
	is $(n-m)$-connected.
\end{corollary}



\printbibliography
\end{document}